\newcommand{\lyxaddress}[1]{
	\par {\raggedright #1
	\vspace{1.4em}
	\noindent\par}
}
\theoremstyle{plain}
\newtheorem{thm}{\protect\theoremname}[section]
\theoremstyle{plain}
\newtheorem{lemma}[thm]{\protect\lemmaname}
\theoremstyle{plain}
\newtheorem{prop}[thm]{\protect\propositionname}
\theoremstyle{definition}
\newtheorem{example}[thm]{\protect\examplename}
\theoremstyle{definition}
\newtheorem{rem}[thm]{\protect\remarkname}
\theoremstyle{plain}
\theoremstyle{plain}
\newtheorem*{prop*}{\protect\propositionname}
\theoremstyle{plain}
\newtheorem{cor}[thm]{\protect\corollaryname}
\theoremstyle{plain}
\providecommand{\corollaryname}{Corollary}
\providecommand{\examplename}{Example}
\providecommand{\factname}{Fact}
\providecommand{\lemmaname}{Lemma}
\providecommand{\propositionname}{Proposition}
\providecommand{\remarkname}{Remark}
\providecommand{\theoremname}{Theorem}
\definecolor{FireBrick}{rgb}{0.5812,0.0074,0.0083}
\definecolor{RoyalBlue}{rgb}{0.0236,0.0894,0.6179}
\definecolor{RoyalGreen}{rgb}{0.0236,0.6179,0.0894}
\definecolor{RoyalRed}{rgb}{0.6179,0.0236,0.0894}
\definecolor{LightBlue}{rgb}{0.8544,0.9511,1.0000}
\definecolor{Black}{rgb}{0.0,0.0,0.0}
\definecolor{promptColor}{rgb}{0.0,0.0,0.589}
\definecolor{brkpromptColor}{rgb}{0.589,0.0,0.0}
\definecolor{gapinputColor}{rgb}{0.589,0.0,0.0}
\definecolor{gapoutputColor}{rgb}{0.0,0.0,0.0}
\definecolor{DarkOlive}{rgb}{0.1047,0.2412,0.0064}
\newenvironment{BGVerbatim}[1]
{\VerbatimEnvironment
	\begin{tcolorbox}[enhanced,
		title=#1,
		fonttitle=\bfseries,
		attach boxed title to top left={xshift=-2mm,yshift=-2mm},
		boxed title style={colbacktitle=white},
		show bounding box,
		breakable,
		colback=gray!10,
		colframe=gray!30,
		spartan,
		titlerule=1mm,
		toprule=1mm,
		rightrule=0mm,
		bottomrule=1mm,
		leftrule=0mm,
		boxsep=-0.2mm
		]\begin{Verbatim}[commandchars=!@|,fontsize=\small,frame=lines,framerule=0mm, rulecolor=\color{gray!60}]}
		{\end{Verbatim}\end{tcolorbox}}
\begin{document}
\title{Bounds for regular induced subgraphs of strongly regular graphs}
\author{Rhys J. Evans}

\maketitle
\vspace{-3em}

\lyxaddress{\begin{center}
Sobolev Institute of Mathematics\par
4 Acad. Koptyug avenue\par
630090 Novosibirsk, Russia\par
\tt{rhysjevans00@gmail.com}\par
\vspace{1em} 
\normalfont \today
\par\end{center}}

\begin{abstract}
Given feasible strongly regular graph parameters $(v,k,\lambda,\mu)$  and a non-negative integer $d$, we determine upper and lower bounds on the order of a $d$-regular induced subgraph of any strongly regular graph with parameters $(v,k,\lambda,\mu)$. Our new bounds are at least as good as the bounds on the order of a $d$-regular induced subgraph of a $k$-regular graph determined by Haemers. Further, we prove that for each non-negative integer $d$, our new upper bound improves on Haemers' upper bound for infinitely many strongly regular graphs.
\end{abstract}

\section{Introduction}

The question of finding the maximum order of a $d$-regular induced subgraph of a given
graph $\Gamma$ is a generalisation of many problems in graph theory. Some examples of these include finding the independence number, clique number and the order of a maximum induced matching in a given graph. In general, finding a $d$-regular induced subgraph of a given graph $\Gamma$ is computationally hard (see Asahiro et al. \cite{AEIM_2014}). Significant improvements in computational time can be made by using bounds on the order of a $d$-regular induced subgraph of $\Gamma$ to reduce the search space of the problem.

Haemers \cite{H_1979} gives an upper and lower bounds on the order of a $d$-regular induced subgraph of a $v$-vertex $k$-regular graph with given least and second largest eigenvalues, which generalises an unpublished result of Hoffman (see \cite{H_2021}). More recently, Cardoso, Karminski and Lozin \cite{CKL_2007} derive the same upper bound as a consequence of semidefinite programming methods which can be applied to any graph.
Considering a strongly regular graph $\Gamma$ with parameters $(v,k,\lambda,\mu)$, Neumaier \cite{N_1982} derives the same upper and lower bounds on the order of a $d$-regular induced subgraph of $\Gamma$, through the use of a combinatorial
argument. 

Greaves and Soicher \cite{GS_2016} analyse an upper bound on the order of cliques in an edge-regular graph with given parameters, called the clique adjacency bound. They prove that given any strongly regular graph $\Gamma$, the clique adjacency bound is at least as good as the well-known Delsarte bound \cite{D_1975}. Furthermore, they find infinitely many strongly regular graphs for which the clique adjacency bound is strictly better than the Delsarte bound. Greaves et al. \cite{GKP_2020} also improve on the Delsarte bound when the parameters of a strongly regular graph meet certain conditions. 

In this paper, we generalise certain results of Greaves and Soicher \cite{GS_2016}, where instead of cliques, we will consider $d$-regular induced subgraphs. In Section \ref{sec:prelim} we introduce known results on strongly regular graphs and their spectra. In Section \ref{sec:specb} we present bounds on regular induced subgraphs of regular graphs given by Haemers \cite{H_1979}. In Section \ref{sec:rabs}, we use the block intersection polynomials defined in Soicher \cite{S_2010} to determine upper and lower bounds on the order of a $d$-regular induced subgraph of any strongly regular graph with parameters $(v,k,\lambda,\mu)$. These bounds are also introduced in Brouwer and Van Maldeghem \cite[Section 1.1.14]{BV_2022}. In Section \ref{sec:compb}, we show that our new bounds are at least as good as the bounds on the order of a $d$-regular induced subgraph of a $k$-regular graph determined by Haemers \cite{H_1979}. 

In Section \ref{sec:strict}, we analyse our upper bound for type I and type II strongly regular graphs separately. Consequently, we prove that for each non-negative integer $d$, our upper bound improves on Haemers' upper bound for infinitely many type I and infinitely many type II strongly regular graphs. At the end of this section, we carry out computations using the \textsf{AGT} package \cite{AGT_2020} to verify the new bounds beat Haemers' bounds relatively often for strongly regular graphs of small order.

In Section \ref{sec:cab} we comment on the relationship between our bounds with the clique adjacency bound (CAB) of Soicher \cite{S_2015}. For strongly regular graphs, we see that the CAB is equivalent to one of our new bounds. In fact for any edge-regular graph, the CAB is at least as good as Hoffman's ratio bound of the complement of the graph (see Haemers \cite{H_2021} for more on Hoffman's bound). We then use the \textsf{AGT} package \cite{AGT_2020} to analyse how good the CAB is for small strongly regular graphs. 

In appendix \ref{app:maple} we use the Groebner package in MAPLE \cite{MAPLE_2011} to verify calculations made in Sections \ref{sec:rabs}, \ref{sec:compb}, and \ref{sec:strict}.

\section{Preliminaries}\label{sec:prelim}

A \emph{graph} is an ordered pair $\Gamma=(V,E)$, where $V$ is a finite set and $E$ is a set of subsets of size $2$ of $V$. Then, the members of $V$ are called the \emph{vertices} of $\Gamma$, and the members of $E$ are called the \emph{edges} of $\Gamma$. We denote the set of vertices of the graph $\Gamma$ by $V(\Gamma)$, and the set of edges of $\Gamma$ by $E(\Gamma)$.

Now let $\Gamma$ be a graph. The \emph{order} of $\Gamma$ is the cardinality $|V(\Gamma)|$ of its vertex set. For any two distinct vertices $u,w$ of $\Gamma$, we denote by $uw$ the set $\{u,w\}$, and $u,w$ are said to be \emph{adjacent} if $uw\in E(\Gamma)$. We do not consider a vertex to be adjacent to itself. Let $u$ be a vertex of $\Gamma$. The \emph{neighbourhood} of $u$ is the set of vertices adjacent to $u$, and is denoted by $\Gamma(u)$. The \emph{degree} of $u$ is the cardinality $|\Gamma(u)|$ of its neighbourhood.  

Consider a set of vertices $U\subseteq V(\Gamma)$. The \emph{induced subgraph} of $\Gamma$ on $U$, denoted by $\Gamma[U]$, is the graph with vertex set $U$, and vertices in $\Gamma[U]$ are adjacent if and only if they are adjacent in $\Gamma$.

Let $v$ be the order of $\Gamma$. The \emph{adjacency matrix} of $\Gamma$,  $A(\Gamma)$, is the $v\times v$ matrix indexed by $V(\Gamma)$ such that $A(\Gamma)_{xy}=1$ if $xy\in E(\Gamma)$, and $A(\Gamma)_{xy}=0$ otherwise. An \emph{eigenvalue} of $\Gamma$ is an eigenvalue of the matrix $A(\Gamma)$.

A graph $\Gamma$ is \emph{$k$-regular} if every vertex of $\Gamma$ has degree $k$.
A graph $\Gamma$ is \emph{strongly regular} with \emph{parameters} $(v,k,\lambda,\mu)$ if $\Gamma$ non-complete, non-null, every pair of adjacent vertices have exactly $\lambda$ common neighbours, and every pair of distinct nonadjacent vertices have exactly $\mu$ common neighbours. We denote by $\text{SRG}(v,k,\lambda,\mu)$ the set of strongly regular graphs with parameters $(v,k,\lambda,\mu)$. 

For a strongly regular graph $\Gamma$ with parameters $(v,k,\lambda,\mu)$, it is known that $\Gamma$ has at most $3$ distinct eigenvalues, with largest eigenvalue $k$ (see Brouwer and Haemers \cite[Theorem 9.1.2]{BH_2011}). The \emph{restricted eigenvalues} of a strongly regular graph are the eigenvalues of the graph with eigenspaces perpendicular to the all-ones vector. We often denote these eigenvalues by $\rho,\sigma$, with $k\geq \rho>\sigma$. The following shows that the eigenvalues of strongly regular graphs only depend on the parameters of the graph.

\begin{prop}
	\label{prop:params} 
	Let $\Gamma$ be in $\text{SRG}(v,k,\lambda,\mu)$
	and $\rho>\sigma$ be the restricted eigenvalues of $\Gamma$. Then
	\begin{enumerate}
		\item $\rho$ and $\sigma$ are uniquely determined from the parameters $(v,k,\lambda,\mu)$, we have $\rho\geq0,\sigma<0$,  and the following relations hold. 
		\begin{eqnarray*}\mu(v-k-1) & = & k(k-\lambda-1)\\
			\lambda-\mu & = & \rho+\sigma\\
			\mu-k & = & \rho \sigma
		\end{eqnarray*}
		\item If $\rho,\sigma$ are not integers, then there exists a positive integer $n$ such that $(v,k,\lambda,\mu)=(4n+1,2n,n-1,n)$.
	\end{enumerate}	
\end{prop}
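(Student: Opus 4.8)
The plan is to prove the two parts separately, working with the adjacency matrix $A=A(\Gamma)$ and the all-ones matrix $J$. For part (1), I would start from the standard identity $A^2 = kI+\lambda A+\mu(J-I-A)$, obtained by counting the common neighbours of an ordered pair of vertices, and rearrange it as $A^2-(\lambda-\mu)A-(k-\mu)I=\mu J$. Since $\Gamma$ is $k$-regular, the all-ones vector spans an eigenspace of $A$ for the eigenvalue $k$, and every restricted eigenvector is orthogonal to it; applying the identity to such a vector, on which $J$ acts as $0$, shows that every restricted eigenvalue is a root of $x^2-(\lambda-\mu)x-(k-\mu)$, a quadratic whose coefficients depend only on $(v,k,\lambda,\mu)$. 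As $\Gamma$ has exactly two restricted eigenvalues and a quadratic has at most two roots, $\rho$ and $\sigma$ must be precisely these roots; this gives their determination by the parameters, and Vieta's formulae give $\rho+\sigma=\lambda-\mu$ and $\rho\sigma=\mu-k$. For the sign claim I would note $\mu\le k$ (the common-neighbour set of a non-adjacent pair lies inside a single neighbourhood), so $\rho\sigma\le0$, and a short check using $\rho>\sigma$ — ruling out $\sigma=0$ via the first relation — yields $\rho\ge0>\sigma$. Finally, the first displayed relation is a double count: fixing a vertex $x$ and counting the edges between $\Gamma(x)$ and $V(\Gamma)\setminus(\{x\}\cup\Gamma(x))$ from both ends gives $k(k-\lambda-1)=\mu(v-k-1)$.

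For part (2), suppose $\rho,\sigma$ are not integers. They are roots of the monic polynomial $x^2-(\lambda-\mu)x-(k-\mu)\in\mathbb{Z}[x]$, so by the rational root theorem they are irrational, hence a conjugate pair over $\mathbb{Q}$. The characteristic polynomial of $A$ has integer coefficients and is therefore fixed by the Galois automorphism interchanging $\rho$ and $\sigma$, so these eigenvalues occur with a common multiplicity $m$; since the restricted multiplicities sum to $v-1$, this forces $v=2m+1$. Then $\operatorname{tr}(A)=0$ gives $k+m(\rho+\sigma)=0$, i.e. $k=m(\mu-\lambda)$, so that $\mu-\lambda$ is a positive integer and $m\mid k$. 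Since $\Gamma$ is non-complete, $k\le v-2=2m-1$, which leaves only $\mu-\lambda=1$ and $k=m$, hence $v=2k+1$; substituting $v-k-1=k$ and $k-\lambda-1=k-\mu$ into the relation $\mu(v-k-1)=k(k-\lambda-1)$ from part (1) gives $\mu=k-\mu$, so $\mu=k/2$. Writing $k=2n$ I would conclude $(v,k,\lambda,\mu)=(4n+1,2n,n-1,n)$, with $n\ge1$ since $\mu\ge1$.

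The matrix identity and the double count are routine; the step I expect to need the most care is the Galois/multiplicity argument in part (2) — that conjugate irrational eigenvalues of an integer matrix occur with equal multiplicity — after which the conference parameters fall out purely from the trace condition and the elementary inequalities $\mu\le k$ and $k\le v-2$. I do not anticipate a genuine obstacle beyond stating that multiplicity fact cleanly.
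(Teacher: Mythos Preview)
Your proposal is correct and is precisely the standard adjacency-matrix argument the paper has in mind: the paper itself gives no details, simply noting that ``this is a routine calculation that uses the properties of the adjacency matrix of a strongly regular graph'' and citing Brouwer and Haemers \cite[Theorem 9.1.3]{BH_2011}, whose proof proceeds exactly via the identity $A^{2}=kI+\lambda A+\mu(J-I-A)$, Vieta on the resulting quadratic, the double count for $\mu(v-k-1)=k(k-\lambda-1)$, and the equal-multiplicity/trace argument for the conference-graph case. One small point worth tightening: in part (2) you implicitly use that $k$ is a simple eigenvalue when writing the restricted multiplicities as summing to $v-1$; this is fine because non-integer $\rho,\sigma$ forces $\mu>0$ (otherwise $\rho=k,\sigma=-1$), hence $\Gamma$ is connected.
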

\begin{proof}
	This is a routine calculation that uses the properties of the adjacency matrix of a strongly regular graph, and can be found in Brouwer and Haemers \cite[Theorem 9.1.3]{BH_2011}.
\end{proof}
Using this Proposition, we can derive the following useful identity. This is a well-known identity, and can be found in Brouwer, Cohen and Neumaier \cite[Theorem 1.3.1(iii)]{BCN_1989}.
\begin{lemma}
	\label{prop:vmu}Let $\Gamma$ be in $\text{SRG}(v,k,\lambda,\mu)$
	with restricted eigenvalues $\rho>\sigma$. Then $v\mu=(k-\sigma)(k-\rho)$.
\end{lemma}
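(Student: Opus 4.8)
The plan is to express both sides of the claimed identity purely in terms of the parameters $(v,k,\lambda,\mu)$, using the relations supplied by Proposition \ref{prop:params}, and then check that the two expressions agree. Since Proposition \ref{prop:params} already tells us $\rho$ and $\sigma$ are determined by $(v,k,\lambda,\mu)$, this reduces the lemma to a single polynomial identity in the parameters.

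First I would expand the right-hand side as
\[
(k-\sigma)(k-\rho) = k^2 - k(\rho+\sigma) + \rho\sigma,
\]
and then substitute the two eigenvalue relations from Proposition \ref{prop:params}(1), namely $\rho+\sigma = \lambda-\mu$ and $\rho\sigma = \mu-k$, to obtain
\[
(k-\sigma)(k-\rho) = k^2 - k(\lambda-\mu) + \mu - k.
\]

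Next I would take the counting identity $\mu(v-k-1) = k(k-\lambda-1)$ from the same proposition, expand both sides to get $v\mu - k\mu - \mu = k^2 - k\lambda - k$, and solve for $v\mu$, which gives
\[
v\mu = k^2 - k\lambda - k + k\mu + \mu = k^2 - k(\lambda-\mu) + \mu - k.
\]
Comparing this with the previous display yields $v\mu = (k-\sigma)(k-\rho)$, completing the proof.

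Every step here is elementary algebra over the reals, so there is no genuine obstacle; the only point requiring care is sign bookkeeping, in particular using $\rho\sigma = \mu - k$ rather than $k-\mu$, and keeping track of which terms carry a factor of $k$ when rearranging the counting identity.
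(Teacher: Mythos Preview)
Your proof is correct and is essentially the same as the paper's: the paper simply says the identity follows immediately from Proposition~\ref{prop:params}, and what you have written is exactly the routine algebraic verification of that claim using the three relations listed there.
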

\begin{proof}
	This follows immediately from Proposition \ref{prop:params}.
\end{proof}

Proposition \ref{prop:params} 2. describes the division of strongly regular graphs into two main classes, called type I and type II. A strongly regular graph $\Gamma$ is of \emph{type I}, or a \emph{conference graph}, if $\Gamma$ is in $\text{SRG}(4n+1,2n,n-1,n)$
for some positive integer $n$. A strongly regular graph $\Gamma$ is of \emph{type II} if all eigenvalues of $\Gamma$ are integer. These two classes of strongly regular graphs are not mutually exclusive (for more about this, see \cite{BH_2011}).

\section{Haemers' spectral bounds}\label{sec:specb}

In his thesis, Haemers \cite{H_1979} derives bounds on the order of induced subgraphs of regular graphs using eigenvalue techniques. 

\begin{prop}\label{upeig}
	Let $\Gamma$ be a $k$-regular graph of order $v$ with smallest eigenvalue $\sigma$. Suppose $\Gamma$ has an induced subgraph $\Delta$ of order $y>0$, and average vertex-degree $d$. Then 
	\begin{equation*}
	y\leq v\left(\frac{d-\sigma}{k-\sigma}\right).
	\end{equation*}
\end{prop}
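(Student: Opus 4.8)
The plan is to run the standard quadratic-form argument built on the positive semidefiniteness of $A(\Gamma)-\sigma I$ together with the special role played by the all-ones vector. Write $A=A(\Gamma)$, let $S=V(\Delta)$ so that $|S|=y$, let $\mathbf{1}_S\in\mathbb{R}^{V(\Gamma)}$ be the characteristic vector of $S$, and let $\mathbf{1}$ be the all-ones vector. Because $\Gamma$ is $k$-regular, $A\mathbf{1}=k\mathbf{1}$; and because $\sigma$ is the least eigenvalue of $A$, the form $x\mapsto x^\top A x-\sigma\|x\|^2$ is nonnegative on the hyperplane $\mathbf{1}^\perp$.

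First I would split $\mathbf{1}_S=\tfrac{y}{v}\mathbf{1}+w$ with $w\perp\mathbf{1}$, and record $\|w\|^2=y-y^2/v$. Next I would evaluate $\mathbf{1}_S^\top A\,\mathbf{1}_S$ in two ways. Combinatorially, $\mathbf{1}_S^\top A\,\mathbf{1}_S$ counts ordered pairs of adjacent vertices lying in $S$; since $\Delta=\Gamma[S]$ and $\Delta$ has average degree $d$ on its $y$ vertices, this equals $dy$. Algebraically, expanding the decomposition and using $A\mathbf{1}=k\mathbf{1}$ together with $w\perp\mathbf{1}$ kills the cross terms and gives $\mathbf{1}_S^\top A\,\mathbf{1}_S=\tfrac{k y^2}{v}+w^\top A w$. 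Then I would apply $w^\top A w\ge\sigma\|w\|^2=\sigma\big(y-\tfrac{y^2}{v}\big)$. Combining these yields $dy\ge\tfrac{k y^2}{v}+\sigma y-\tfrac{\sigma y^2}{v}$; dividing by $y>0$ and collecting terms gives $d-\sigma\ge\tfrac{y}{v}(k-\sigma)$, and since $\Gamma$ has an edge we have $k>\sigma$, so dividing by $k-\sigma$ produces the stated inequality.

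There is essentially no hard part here; the only thing to be careful about is the combinatorial identity $\mathbf{1}_S^\top A\,\mathbf{1}_S=dy$, which relies on $\Delta$ being the \emph{induced} subgraph on $S$ (so the edges contributing to the form are exactly the edges of $\Delta$) and on reading ``average vertex-degree $d$'' as ``the degree sum of $\Delta$ equals $dy$''. An equivalent route is eigenvalue interlacing: form the $2\times2$ quotient matrix of $A$ for the partition $\{S,V(\Gamma)\setminus S\}$; its row sums are all $k$, so one eigenvalue is $k$ and, reading off the trace, the other is $d-\tfrac{y(k-d)}{v-y}$, which by interlacing is at least $\sigma$; the same rearrangement finishes. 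I would present the quadratic-form version, since it is self-contained and does not require quoting the interlacing theorem.
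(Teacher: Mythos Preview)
Your proof is correct. The paper itself does not give a proof but defers to Haemers' thesis, citing eigenvalue interlacing (the $2\times2$ quotient-matrix argument you sketch at the end). Your primary route---applying the positive semidefiniteness of $A-\sigma I$ to the projection of $\mathbf{1}_S$ onto $\mathbf{1}^\perp$---is the other standard argument for this bound; it is self-contained and avoids quoting an interlacing theorem, at the mild cost of hiding where equality can hold (interlacing makes the tightness condition, an equitable partition, more transparent). One small remark: you do not need to restrict to $\mathbf{1}^\perp$ for the inequality $w^\top A w\ge\sigma\|w\|^2$, since $A-\sigma I\succeq 0$ globally; the orthogonality is only used to kill the cross terms and to compute $\|w\|^2$.
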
 
\begin{proof}
	This is a standard result that uses eigenvalue interlacing. The proof can be found in Haemers \cite[Theorem 2.1.4]{H_1979}.
\end{proof}
A lower bound can also be derived when considering connected graphs.
Note that this bound need not be positive.
\begin{prop}
	Let $\Gamma$ be a $k$-regular graph of order $v$ with second largest eigenvalue $\rho$. Suppose $\Gamma$ has an induced subgraph $\Delta$ of order $y>0$, and average vertex-degree $d$. Then 
	\begin{equation*}
	y\geq v\left(\frac{d-\rho}{k-\rho}\right).
	\end{equation*}
\end{prop}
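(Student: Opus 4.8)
The statement to prove is the lower-bound companion to Proposition~\ref{upeig}: if $\Gamma$ is $k$-regular of order $v$ with second largest eigenvalue $\rho$, and $\Delta$ is an induced subgraph of order $y>0$ with average degree $d$, then $y \geq v(d-\rho)/(k-\rho)$. The plan is to deduce this directly from the upper bound by passing to the complement. Write $\overline{\Gamma}$ for the complement of $\Gamma$; it is $\overline{k}$-regular on the same $v$ vertices with $\overline{k} = v-1-k$. The standard relation $A(\overline{\Gamma}) = J - I - A(\Gamma)$ shows that, on the subspace orthogonal to the all-ones vector, each eigenvalue $\theta$ of $\Gamma$ contributes an eigenvalue $-1-\theta$ of $\overline{\Gamma}$; hence the smallest eigenvalue of $\overline{\Gamma}$ is $\overline{\sigma} = -1-\rho$, where $\rho$ is the second largest eigenvalue of $\Gamma$.

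Next I would track what happens to the induced subgraph. The complement of $\Delta$ taken inside $\Gamma$ corresponds to the induced subgraph $\overline{\Gamma}[V(\Delta)]$ on the same $y$ vertices, and its average degree is $\overline{d} = y-1-d$, since each vertex of $\Delta$ has $d$ neighbours on average inside $\Delta$ with respect to $\Gamma$ and therefore $y-1-d$ neighbours inside the same vertex set with respect to $\overline{\Gamma}$. Applying Proposition~\ref{upeig} to $\overline{\Gamma}$ and its induced subgraph on $V(\Delta)$ gives
\begin{equation*}
y \leq v\left(\frac{\overline{d}-\overline{\sigma}}{\overline{k}-\overline{\sigma}}\right) = v\left(\frac{(y-1-d)-(-1-\rho)}{(v-1-k)-(-1-\rho)}\right) = v\left(\frac{y-d+\rho}{v-k+\rho}\right).
\end{equation*}
Here I should note that $\overline{\sigma}$ really is the least eigenvalue of $\overline{\Gamma}$: this uses that $\overline{k} = v-1-k \geq \rho \geq -1-\rho = \overline{\sigma}$, which holds because $\rho \geq -1$ for any graph with an edge (and if $\Gamma$ has no edges the statement is trivial as $d=0\le\rho$ forces the right side nonpositive). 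One also needs $\overline{k} > \overline{\sigma}$ so the denominator is positive, equivalently $v-1-k > -1-\rho$, i.e.\ $v-k+\rho>0$; since $\rho\ge -1$ and $v>k$ (as $\Gamma$ is not complete when $\rho<k$, and the $\rho=k$ case makes $\Gamma$ disconnected where the bound is again vacuous) this holds.

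Finally I would rearrange the displayed inequality. Multiplying out, $y(v-k+\rho) \leq v(y - d + \rho)$, so $y(v-k+\rho) - vy \leq v(\rho - d)$, i.e.\ $-y(k-\rho) \leq -v(d-\rho)$, and dividing by $k-\rho>0$ (again $k>\rho$ unless $\Gamma$ is disconnected, a case handled separately) yields $y \geq v(d-\rho)/(k-\rho)$, as desired. The only real obstacle is bookkeeping: making sure the translation $\theta \mapsto -1-\theta$ sends the \emph{second largest} eigenvalue of $\Gamma$ to the \emph{least} eigenvalue of $\overline{\Gamma}$ rather than to some other eigenvalue, and confirming the positivity of the denominators so the inequality direction is preserved; the degenerate cases (no edges, or $\Gamma$ disconnected so $\rho=k$) should be dispatched first, where one checks the claimed bound is non-positive or an equality and hence trivially true.
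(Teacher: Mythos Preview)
Your complementation argument is correct and gives a valid proof, but it takes a different route from the paper, which simply cites Haemers' thesis and indicates that the result follows from eigenvalue interlacing. The direct interlacing proof partitions $V(\Gamma)$ into $V(\Delta)$ and its complement, forms the $2\times 2$ quotient matrix, and reads off both the upper and lower bounds simultaneously from the two interlacing inequalities. Your approach instead recycles Proposition~\ref{upeig} by passing to $\overline{\Gamma}$; this is economical in that it reduces the lower bound to the already-proved upper bound, at the cost of some bookkeeping about how eigenvalues and degrees transform under complementation. Both arguments are standard and of comparable length.

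One point in your write-up deserves tightening. To justify that $\overline{\sigma}=-1-\rho$ is the least eigenvalue of $\overline{\Gamma}$, you write the chain $\overline{k}\ge\rho\ge-1-\rho$ and say this holds because $\rho\ge-1$. But $\rho\ge-1-\rho$ is equivalent to $\rho\ge-1/2$, not $\rho\ge-1$, so that step is not right as stated. The clean argument is simply: the non-trivial eigenvalues of $\overline{\Gamma}$ are $\{-1-\theta_i:i\ge2\}$, the smallest of which is $-1-\theta_2=-1-\rho$; and $\overline{k}\ge-1-\rho$ automatically because $\overline{k}$ is the spectral radius of the regular graph $\overline{\Gamma}$. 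This also immediately gives $\overline{k}-\overline{\sigma}=v-k+\rho\ge0$, with equality only when $\overline{\Gamma}$ is edgeless (i.e.\ $\Gamma=K_v$), a degenerate case you already flag. With that correction the argument is complete.
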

\begin{proof}
	This is a standard result that uses eigenvalue interlacing. The proof can be found in Haemers \cite[Theorem 2.1.4]{H_1979}.
\end{proof}
For a $k$-regular graph $\Gamma$ of order $v$ and with smallest eigenvalue $\sigma$,  we define the upper bound of Haemers 
\begin{equation}
\text{Haem}_{\geq}(\Gamma,d):=v\left(\frac{d-\sigma}{k-\sigma}\right)
\end{equation}
and if $\Gamma$ is connected with second largest eigenvalue $\rho$, we define the lower bound of Haemers
\begin{equation}
\text{Haem}_{\leq}(\Gamma,d):=v\left(\frac{d-\rho}{k-\rho}\right).
\end{equation}
We note that $\text{Haem}_{\geq}(\Gamma,0)$ coincides with the well-known Hoffman ratio bound \cite{H_2021}, so the bounds of Haemers' generalise the Hoffman ratio bound.

\begin{example}\label{exmp:slg}
	In this example, we will see that for certain cases of (strongly) regular graphs, the upper and lower bounds of Haemers are attained.
	
	For $n\geq2$, the \emph{square lattice graph} $L_{2}(n)$
	has vertex set $\{1,2,...,n\}\times\{1,2,...,n\} $, and two distinct vertices are joined
	by an edge precisely when they have the same value at one coordinate.
	This graph is strongly regular with parameters $(n^{2},2(n-1),n-2,2)$, and has eigenvalues $k=2n-2,\rho=n-2,\sigma=-2$.

	Now consider the induced subgraph $\Delta$ with vertex set consisting of the complement of two distinct columns. Then $\Delta$ is a $(2n-4)$-regular induced subgraph of order $n^{2}-2n$, which is the lower bound of Haemers.
	\begin{figure}[ht!]
		\centering
		\includegraphics[width=50mm]{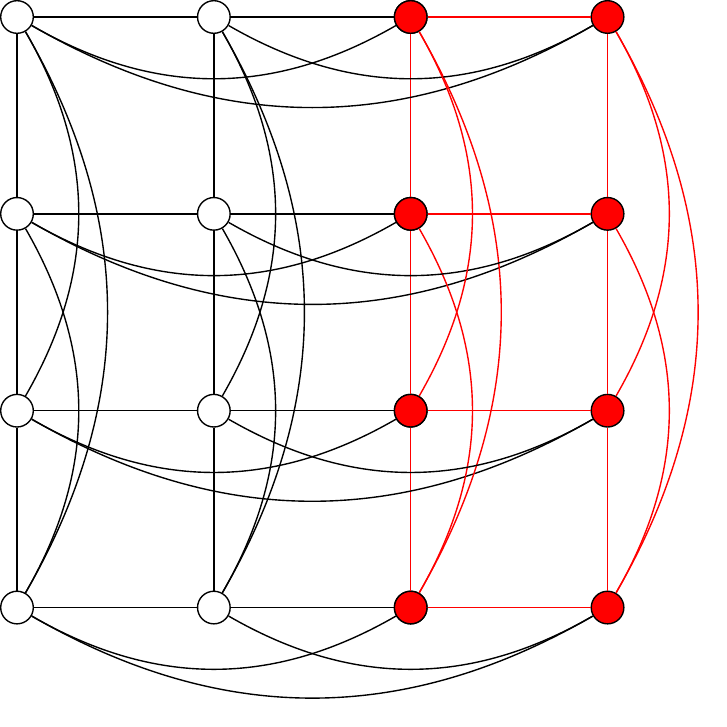} 
		\caption[Haemers' lower bound]{A regular induced subgraph attaining Haemers' lower bound}
		\label{fig:hlb}
	\end{figure} 
	
	Now consider the induced subgraph $\Delta$ with vertex set consisting of the complement of a maximum-size independent set. Then $\Delta$ is a $(2n-4)$-regular induced subgraph of order $n^{2}-n$, which is the upper bound of Haemers.

	\begin{figure}[ht!]
		\centering
		\includegraphics[width=50mm]{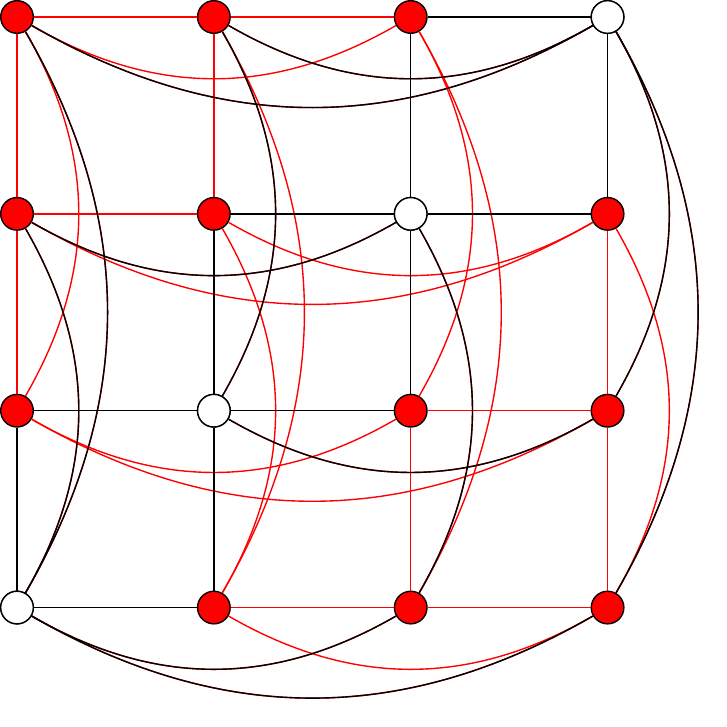} 
		\caption[Haemers' upper bound]{A regular induced subgraph attaining Haemers' upper bound}
		\label{fig:hub}
	\end{figure} 
\end{example}	
\pagebreak

\section{The Regular Adjacency Bounds}\label{sec:rabs}

Our aim for this section is to derive bounds on the order of a $d$-regular induced subgraph of a strongly regular graph, using a certain block intersection polynomial. This approach generalises results of Greaves and Soicher \cite{GS_2016}.

Let $\Gamma\in \text{SRG}(v,k,\lambda,\mu)$. We define the \emph{regular adjacency polynomial} for the graph $\Gamma$, or \emph{rap},
as the polynomial
\begin{equation*}
R_{\Gamma}(x,y,d):=x(x+1)(v-y)-2xyk+(2x+\lambda-\mu+1)yd+y(y-1)\mu-yd^{2}.
\end{equation*}
This is the polynomial found in Soicher \cite[Theorem 1.2]{S_2015},
applied with constant degree sequence $(d,d,...,d)$. Note that we
are dealing with strongly regular graphs, so we do not need to consider
the diameter condition stated in the theorem. 

This polynomial has some useful properties, which come from the fact that it is a block intersection polynomial.

\begin{thm}
	\label{thm:rapcon}Let $\Gamma$ be in $\text{SRG}(v,k,\lambda,\mu)$
	and $\Delta$ be a $d$-regular induced subgraph of order $y\geq 2$ in
	$\Gamma$. Then $R_{\Gamma}(m,y,d)\geq0$ for all integers $m$. 
\end{thm}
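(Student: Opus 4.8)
The plan is to run the double-counting argument behind block intersection polynomials, specialised to a strongly regular graph. Write $S := V(\Delta)$, so $|S| = y$, and for each vertex $w$ of $\Gamma$ set $n_w := |\Gamma(w) \cap S|$, the number of neighbours of $w$ lying in $\Delta$. Since $\Delta$ is $d$-regular and induced, $n_w = d$ for every $w \in S$. The engine of the proof is the elementary observation that for any integer $m$ the quantity $(n_w - m)(n_w - m - 1)$ is a product of two consecutive integers, hence non-negative; summing over the vertices \emph{outside} $S$ gives
\[
\sum_{w \in V(\Gamma) \setminus S} (n_w - m)(n_w - m - 1) \;\ge\; 0 .
\]
Everything then reduces to checking that the left-hand side is exactly $R_\Gamma(m,y,d)$.

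To identify it, I would evaluate three sums over $w \in V(\Gamma)\setminus S$. Counting incidences $(w,s)$ with $s\in S\cap\Gamma(w)$, using $k$-regularity of $\Gamma$ together with $n_w=d$ on $S$, gives $\sum_w n_w = yk - yd$. Counting triples $(w,s,s')$ with $s\ne s'$ in $S$ and $w$ a common neighbour of $s$ and $s'$ outside $S$: there are $y(y-1)$ ordered pairs of distinct vertices of $S$, of which $yd$ are adjacent and $y(y-1)-yd$ non-adjacent (this is where the induced-subgraph hypothesis enters), contributing $\lambda$ respectively $\mu$ common neighbours each, and subtracting the $yd(d-1)$ common neighbours that lie in $S$ yields $\sum_w n_w(n_w-1) = y(y-1)\mu + yd(\lambda - \mu - d + 1)$. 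Finally $\sum_w 1 = v - y$.

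It then remains to expand
\[
\sum_{w} (n_w-m)(n_w-m-1) = \sum_w n_w(n_w-1) - 2m\sum_w n_w + m(m+1)\sum_w 1,
\]
substitute the three evaluations, and rearrange; the outcome matches $R_\Gamma(m,y,d)$ term by term. There is no genuine obstacle here beyond the bookkeeping: the two points needing a little care are remembering to restrict the summation to $V(\Gamma)\setminus S$ — the $d$-regularity of $\Delta$ is precisely what lets the $S$-part be peeled off and makes the leading coefficient $v-y$ rather than $v$ — and keeping the signs straight in the final algebraic identification, which, incidentally, never invokes the eigenvalue relations of Proposition~\ref{prop:params}, only $k$-regularity and the defining $\lambda,\mu$ counts.
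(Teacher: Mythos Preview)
Your argument is correct and is precisely the double-counting proof underlying Soicher's block intersection polynomial theorem \cite[Theorem~1.1]{S_2010}, which the paper simply cites; you have unpacked that citation explicitly in the strongly regular case. The three sums are evaluated correctly and the algebraic identification with $R_\Gamma(m,y,d)$ checks out term by term.
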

\begin{proof}
	This result is an application of Soicher \cite[Theorem 1.1]{S_2010}.
\end{proof}
Note that given a set $S\subseteq V(\Gamma)$ such that $\Gamma[S]$ is $d$-regular, it is not necessarily true that a proper subset of $S$ induces a $d$-regular subgraph. Because of this, how we define bounds from the properties of the regular adjacency polynomial will be slightly different to how the clique adjacency bound is defined in Soicher \cite{S_2015} (which is derived from the properties of another block intersection polynomial, the clique adjacency polynomial).

Consider the set 
\begin{equation*}
S_{d}:=\{y\in \{d+1,\dots,v\}:\text{ for all integers }x, R_{\Gamma}(x,y,d)\geq 0  \}.
\end{equation*}
We define the \emph{regular adjacency upper bound}, or \emph{raub} of the strongly regular graph $\Gamma$ as
\[
\text{Rab}_{\geq}(\Gamma,d):=\begin{cases}
\max(S_{d}) & S_{d}\not=\emptyset,\\
0 & \text{otherwise,}
\end{cases}
\]
and we define the \emph{regular adjacency lower bound}, or \emph{ralb} of the strongly regular graph $\Gamma$ as
\[
\text{Rab}_{\leq}(\Gamma,d):=\begin{cases}
\min(S_{d}) & S_{d}\not=\emptyset\\
v+1 & \text{otherwise.}
\end{cases}
\]
Note that these bounds are the same for any two distinct graphs in $\text{SRG}(v,k,\lambda,\mu)$. 

After dealing with a trivial case, we can now use Theorem \ref{thm:rapcon} to prove that the graph $\Gamma$  has no non-empty $d$-regular induced subgraph of order greater than $\text{Rab}_{\geq}(\Gamma,d)$ or less than $\text{Rab}_{\leq}(\Gamma,d)$. 

\begin{thm}
	\label{thm:rabs}Let $\Gamma$ be in $\text{SRG}(v,k,\lambda,\mu)$ and $\Delta$ be a $d$-regular induced subgraph of order $y>0$ in
	$\Gamma$. Then 
	\begin{equation*}
	\text{Rab}_{\leq}(\Gamma,d)\leq y\leq\text{Rab}_{\geq}(\Gamma,d).
	\end{equation*}
\end{thm}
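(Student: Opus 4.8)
The plan is to reduce the whole statement to the single claim that $y \in S_d$. Once this is established, $S_d \neq \emptyset$, so by definition $\text{Rab}_{\leq}(\Gamma,d) = \min(S_d) \leq y$ and $\text{Rab}_{\geq}(\Gamma,d) = \max(S_d) \geq y$, which is exactly the desired chain of inequalities. Thus the entire proof amounts to verifying the two conditions that define membership in $S_d$: that $d+1 \leq y \leq v$, and that $R_\Gamma(x,y,d) \geq 0$ for every integer $x$.

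The range condition is immediate. Since $\Delta$ is $d$-regular, each of its $y$ vertices has $d$ neighbours inside $\Delta$, forcing $y \geq d+1$; and $\Delta$ is an induced subgraph of the $v$-vertex graph $\Gamma$, so $y \leq v$. For the polynomial condition, if $y \geq 2$ this is precisely Theorem \ref{thm:rapcon}, and we are done. The only remaining case is $y = 1$, which is the trivial case alluded to before the statement: a single vertex induces $K_1$, which is $0$-regular, so necessarily $d = 0$. Substituting $y = 1$ and $d = 0$ into the rap gives $R_\Gamma(x,1,0) = x\bigl((x+1)(v-1) - 2k\bigr)$, and I would check directly that this is non-negative for all integers $x$ by observing that the two factors never have opposite signs: both vanish at $x = 0$; for $x \geq 1$ we have $(x+1)(v-1) - 2k \geq 2(v-k-1) > 0$, using that $\Gamma$ is non-complete and hence $v \geq k+2$; and for $x \leq -1$ the factor $(x+1)(v-1) - 2k$ is non-positive while $x$ is negative, so their product is again non-negative. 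Hence $1 \in S_0$, finishing the case $y = 1$.

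I do not anticipate a genuine obstacle here: the argument is essentially just unwinding the definition of $S_d$ and invoking Theorem \ref{thm:rapcon}. The only mildly delicate point is the explicit computation in the $y = 1$ case, and even there the sole external input needed is the elementary inequality $v \geq k+2$, which holds for every strongly regular graph simply because it is non-complete.
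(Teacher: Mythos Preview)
Your proposal is correct and follows essentially the same approach as the paper: reduce to membership in $S_d$, invoke Theorem~\ref{thm:rapcon} for $y\geq 2$, and handle $y=1$ by direct inspection of $R_\Gamma(x,1,0)=x\bigl((x+1)(v-1)-2k\bigr)$. The only cosmetic difference is that the paper checks non-negativity at integers by locating the two real roots in $[-1,1]$, whereas you do an equivalent sign analysis of the two linear factors.
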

\begin{proof}
	It is easy to see that $\Delta$ has to have at least $d+1$ vertices. If $y\geq 2$, then by Theorem \ref{thm:rapcon}, we have $R_{\Gamma}(x,y,d)\geq 0$ for all integers $x$. By the definitions of the raub and ralb,
	\begin{equation*}
	\text{Rab}_{\leq}(\Gamma,d)\leq y\leq\text{Rab}_{\geq}(\Gamma,d).
	\end{equation*}
	The only case left to consider is when $y=1$. As $\Delta$ has at least $d+1$ vertices, we must have $d=0$. Consider
	\vspace{-.25em}
	\begin{equation*}
	R_{\Gamma}(x,1,0)=x\left((x+1)(v-1)-2k\right).
	\end{equation*}
	\vspace{-.25em}
	This polynomial in $x$ has roots $x_{1}=0$ and $x_{2}=2k/(v-1)-1$. As $k\leq v-1$, we have $-1 \leq x_{2}\leq 1$.
	
	Therefore, we have the following three cases.
	\vspace{-.25em}
	\begin{enumerate}
		\item  $-1\leq x_{2} < 0$ and $R(x,1,0)$ is negative only for $x$ lying in an open interval contained in $(-1,0)$. 
		\vspace{-.3em}
		\item $x_{2} = 0$ and $R(x,1,0)$ is non-negative for all $x$.
		\vspace{-.3em}
		\item $0 < x_{2} \leq 1$ and $R(x,1,0)$ is negative only for $x$ lying in an open interval contained in $(0,1)$. 
	\end{enumerate}
	\vspace{-.25em} 		
	In each case, for all integers $x$ we have $R(x,1,0)\geq 0$. By definition, we see that
	\begin{equation*}
	\text{Rab}_{\leq}(\Gamma,0)\leq 1\leq\text{Rab}_{\geq}(\Gamma,0).
	\end{equation*}	
\end{proof}

\section{Comparison of bounds}\label{sec:compb}

We will now compare the bounds of Haemers from Section \ref{sec:specb} with the raub and ralb defined in Section \ref{sec:rabs}. For $\Gamma\in \text{SRG}(v,k,\lambda,\mu)$, and non-negative integer $d\leq k$, the next three propositions show that $\text{Rab}_{\geq}(\Gamma,d)\leq\lfloor\text{Haem}_{\geq}(\Gamma,d)\rfloor$,
and $\text{Rab}_{\leq}(\Gamma,d)\geq\lceil\text{Haem}_{\leq}(\Gamma,d)\rceil$.

First we note that for any strongly regular graph parameters $(v,k,\lambda,\mu)$, each of the bounds $$\text{Haem}_{\geq}(\Gamma,d),\text{Haem}_{\leq}(\Gamma,d),\text{Rab}_{\geq}(\Gamma,d),\text{Rab}_{\leq}(\Gamma,d)$$ on $d$-regular induced subgraphs is independent of the choice of the graph $\Gamma$ in $\text{SRG}(v,k,\lambda,\mu)$. Therefore, we will only be concerned with fixed parameter sets and their corresponding restricted eigenvalues.

In the following we present a useful value, which will be important throughout the remainder of this paper.

\begin{rem}\label{rem:x_y}
	At most values of $y$ and $d$, the polynomial
	$R_{\Gamma}(x,y,d)$ is a quadratic in $x$ with positive leading coefficient. For $\Gamma\in \text{SRG}(v,k,\lambda,\mu)$, non-negative integer $d\leq k$ and $y$ in the range $0<y<v$, $R_{\Gamma}(x,y,d)$ is a polynomial in $x$ which it attains its minimum value at 
	\begin{equation}\label{eq:x_y}
		x_y=\frac{2y(k-d)-(v-y)}{2(v-y)}.
	\end{equation}
\end{rem}

We will now use this value to prove that the regular adjacency bounds are at least as good as Haemers' bounds by the following observations. For any fixed value of $y$ in the ranges $0<y<\text{Haem}_{\leq}(\Gamma,d)$ and $\text{Haem}_{\geq}(\Gamma,d)<y<v$,
we will see that the quadratic $R_{\Gamma}$ in $x$ is negative on an open interval
of length strictly greater than $1$. Every interval of length more
than $1$ must contain an integer, with which we can then use in applying
Theorem \ref{thm:rapcon}.

\begin{lemma}\label{lem:upbound}
	Let $\Gamma$ be in $\text{SRG}(v,k,\lambda,\mu)$ where $\mu\not=0$ and $d$ be an integer, $0\leq d \leq k$. For all $y$ such that $0<y<\text{Haem}_{\leq}(\Gamma,d)$
	or $\text{Haem}_{\geq}(\Gamma,d)<y<v$, there is an integer $b_{y}$ such that $R_{\Gamma}(b_{y},y,d)<0$. 
\end{lemma}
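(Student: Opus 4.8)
The plan is to evaluate $R_\Gamma$ at its minimising value $x_y$ from Remark \ref{rem:x_y}, show that this minimum is strictly negative precisely when $y$ lies outside the closed interval $[\text{Haem}_\leq(\Gamma,d),\text{Haem}_\geq(\Gamma,d)]$, and then control the width of the interval on which $R_\Gamma(\cdot,y,d)$ is negative. Since $R_\Gamma(x,y,d)$ is a quadratic in $x$ with leading coefficient $v-y>0$ (as $0<y<v$), its minimum value is $-\tfrac{D_y}{4(v-y)}$ where $D_y$ is the discriminant in $x$; equivalently one can just substitute $x_y$ directly. The first key step is therefore the algebraic identity computing $R_\Gamma(x_y,y,d)$ in closed form. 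I expect this to factor nicely: using $v\mu=(k-\sigma)(k-\rho)$ (Lemma \ref{prop:vmu}), $\lambda-\mu=\rho+\sigma$ and $\mu-k=\rho\sigma$ (Proposition \ref{prop:params}), the numerator should come out proportional to $\bigl(y(k-\sigma)-v(d-\sigma)\bigr)\bigl(y(k-\rho)-v(d-\rho)\bigr)$, i.e. to $(v-y)^2$ times a constant times $\bigl(y-\text{Haem}_\geq(\Gamma,d)\bigr)\bigl(y-\text{Haem}_\leq(\Gamma,d)\bigr)$ after dividing through by $(k-\sigma)(k-\rho)$. If that is so, then $R_\Gamma(x_y,y,d)<0$ is immediate for $y$ strictly between the two Haemers values or strictly outside both — and the hypotheses of the lemma are exactly the ``strictly below $\text{Haem}_\leq$ or strictly above $\text{Haem}_\geq$'' case, so the minimum is negative there. (The condition $\mu\neq 0$ ensures $v-k-1>0$ via Proposition \ref{prop:params}, keeping the relevant quantities well-behaved; it also rules out the disjoint-union degenerate parameter sets.)

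Having established that the parabola $x\mapsto R_\Gamma(x,y,d)$ dips below zero, the second key step is to show the set $\{x:R_\Gamma(x,y,d)<0\}$ is an open interval of length strictly greater than $1$, hence contains an integer $b_y$; this integer is the witness the lemma asks for. The length of that interval is $\sqrt{D_y}/(v-y) = 2\sqrt{-R_\Gamma(x_y,y,d)/(v-y)}$. Using the factorisation above, $-R_\Gamma(x_y,y,d)/(v-y)$ is (up to the constant $(k-\sigma)(k-\rho)/v = \mu$, by Lemma \ref{prop:vmu}) equal to $-\bigl(y-\text{Haem}_\geq\bigr)\bigl(y-\text{Haem}_\leq\bigr)(v-y)/v \cdot \tfrac{1}{(v-y)}$ — I will need to track the exact constants — so the interval length squared is a rational function of $y$ whose sign and magnitude I can bound on the two relevant $y$-ranges. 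The cleanest route is to check that this length is at least $1$ at the ``worst'' points and monotone/concave in between; concretely, the length is $0$ exactly at $y=\text{Haem}_\leq$ and $y=\text{Haem}_\geq$ and is a concave function of $y$ on $(\text{Haem}_\leq,\text{Haem}_\geq)$, while on $(\text{Haem}_\geq,v)$ one checks it directly. So the remaining work is to verify length $>1$ at the relevant integer-or-boundary configurations; here one uses that $y$ is an integer and $\text{Haem}_\leq,\text{Haem}_\geq$ need not be, so ``$y>\text{Haem}_\geq$'' already forces $y$ to exceed it by a definite amount only when $\text{Haem}_\geq$ is non-integral — otherwise one must be slightly more careful, and I anticipate invoking $y\le v-1$ or revisiting the $y=v-1$ corner.

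The main obstacle, then, is the length estimate: showing the negative interval has length exceeding $1$ uniformly over the stated $y$-ranges, rather than the factorisation, which is a finite (if tedious) computation verifiable by the MAPLE Groebner calculations promised in Appendix \ref{app:maple}. In particular the delicate region is $y$ just above $\text{Haem}_\geq(\Gamma,d)$ (and symmetrically just below $\text{Haem}_\leq(\Gamma,d)$), where $-R_\Gamma(x_y,y,d)$ is small; I expect to need the integrality of $y$ together with a lower bound on the ``gap'' — either $\text{Haem}_\geq$ is irrational (type I graphs, handled via Proposition \ref{prop:params}(2)), or it is rational with bounded denominator, forcing $y\ge \text{Haem}_\geq + c$ for an explicit $c$ that makes the length bound go through. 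If a fully uniform argument proves awkward, the fallback is to reduce to finitely many residual parameter families and dispatch them with the \textsf{AGT} computations already cited, but I would first attempt the closed-form concavity argument, which should handle all but a controlled set of corner cases.
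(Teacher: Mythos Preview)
Your overall architecture is right, but there is a concrete error and a missing idea that together leave a genuine gap.

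First, the factorisation you anticipate for $R_\Gamma(x_y,y,d)$ is incorrect: this vertex value does \emph{not} vanish at $y=\text{Haem}_\geq(\Gamma,d)$ or $y=\text{Haem}_\leq(\Gamma,d)$. Indeed, at $y=\text{Haem}_\geq(\Gamma,d)$ the parabola $x\mapsto R_\Gamma(x,y,d)$ has its two roots exactly at $x_y\pm\tfrac12$, so the vertex value there equals $-(v-y)/4<0$. Hence $R_\Gamma(x_y,y,d)$ cannot be proportional to $\bigl(y-\text{Haem}_\geq\bigr)\bigl(y-\text{Haem}_\leq\bigr)$ as you expect, and your sign analysis of the minimum collapses.

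Second, the entire ``length $>1$'' obstacle dissolves once you evaluate at $x_y+\tfrac12$ rather than at $x_y$. For a monic-up parabola with vertex $x_y$ and leading coefficient $v-y$, the negative interval has length exceeding $1$ \emph{if and only if} $R_\Gamma(x_y+\tfrac12,y,d)<0$; equivalently, your length${}^2$ equals $1-\dfrac{4}{v-y}R_\Gamma(x_y+\tfrac12,y,d)$. And it is precisely this shifted quantity that factors cleanly: using Proposition~\ref{prop:params} and Lemma~\ref{prop:vmu} one gets
\[
-\frac{(v-y)\mu}{y}\,R_\Gamma\!\Bigl(x_y+\tfrac12,\,y,\,d\Bigr)=\bigl(\mu y-(d-\rho)(k-\sigma)\bigr)\bigl(\mu y-(d-\sigma)(k-\rho)\bigr),
\]
whose roots in $y$ are exactly $\text{Haem}_\leq(\Gamma,d)$ and $\text{Haem}_\geq(\Gamma,d)$. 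This single computation simultaneously establishes negativity of the minimum \emph{and} the required length bound on the stated $y$-ranges, with no case analysis. Note also that the lemma does not assume $y$ is an integer, so your proposed fallback via integrality of $y$ and type~I/II splitting would not be available even if it were needed.
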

\begin{proof}
	Let $0<y<v$. Then $R_{\Gamma}(x,y,d)$ is a quadratic polynomial with
	positive leading coefficient. In Remark \ref{rem:x_y} we note that $R_{\Gamma}(x,y,d)$ attains its minimum value in $x$ at the point $x_{y}$ (Equation \eqref{eq:x_y}). If $R_{\Gamma}(x_{y}+1/2,y,d)<0$,
	by symmetry of the quadratic around $x=x_{y}$, we must have $R_{\Gamma}(x_{y}-1/2,y)<0$,
	and so we have $R_{\Gamma}(x,y,d)<0$ for all  $x\in[x_{y}-1/2,x_{y}+1/2]$.
	This is an interval of size $1$, so must contain an integer $b_{y}$,
	and $R_{\Gamma}(b_{y},y,d)<0$. We claim $R(x_{y}+1/2,y,d)<0$ for
	$y>\text{Haem}_{\geq}(\Gamma,d)$ or $y<\text{Haem}_{\leq}(\Gamma,d)$, which proves the result.
	
	Using equation \eqref{eq:x_y}, we see that
	\begin{equation*}
		x_{y}+\frac{1}{2} = \frac{(k-d)y}{v-y}
	\end{equation*}
	Let $\rho>\sigma$ be the restricted
	eigenvalues corresponding to the strongly regular graphs parameters
	$(v,k,\lambda,\mu)$. We then establish the following identity, using the relations for
	strongly regular graph parameters (and is verified using Maple in Appendix \ref{app:maple}).
	\begin{eqnarray*}
		-\frac{(v-y)}{y}R_{\Gamma}(x_{y}+1/2,y,d) & = & \mu y^{2}-((d-\rho)(k-\sigma)+(d-\sigma)(k-\rho))y\\
		&  & \qquad+v(d-\sigma)(d-\rho).
	\end{eqnarray*}
	Then multiply by $\mu$ and deduce the following identity by using Lemma \ref{prop:vmu}. 
	\begin{equation}\label{eq:rapfact}
	-\frac{(v-y)}{y}\mu R_{\Gamma}(x_{y}+1/2,y,d) = (\mu y-(d-\rho)(k-\sigma))(\mu y-(d-\sigma)(k-\rho))
	\end{equation}
	(this is also verified using Maple in Appendix \ref{app:maple}). Consider the right side of Equation \eqref{eq:rapfact} as a quadratic in $y$. Take the roots
	of this quadratic, 
	\begin{equation*}
	\alpha=\frac{(d-\rho)(k-\sigma)}{\mu},\beta=\frac{(d-\sigma)(k-\rho)}{\mu}.
	\end{equation*}
	As $(k-\rho)(d-\sigma)-(k-\sigma)(d-\rho)=(\rho-\sigma)(k-d)$ is positive, we have $\beta\geq\alpha$.
	By Lemma \ref{prop:vmu}, $\beta=\text{Haem}_{\geq}(\Gamma,d)$ and $\alpha=\text{Haem}_{\leq}(\Gamma,d)$. We know that $\mu(v-y)/y>0$, so we have $R_{\Gamma}(x_{y}+1/2,y,d)<0$
	if and only if the right side of Equation \eqref{eq:rapfact} is positive. This is exactly when
	$y>\beta=\text{Haem}_{\geq}(\Gamma,d)$ or $y<\alpha=\text{Haem}_{\leq}(\Gamma,d)$. 
\end{proof}
Now we deal with the case when $\mu=0$.
\begin{lemma}\label{lem:upboundmu0}
	Let $\Gamma$ be in $\text{SRG}(v,k,\lambda,\mu)$ where $\mu=0$ and $d$ be an integer, $0\leq d \leq k$. For all $y$ such that $\text{Haem}_{\geq}(\Gamma,d)<y<v$, there is an
	integer $b_{y}$ such that $R_{\Gamma}(b_{y},y,d)<0$.
\end{lemma}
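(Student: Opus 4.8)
The plan is to mimic the structure of the proof of Lemma~\ref{lem:upbound}, specialising to the degenerate case $\mu = 0$. When $\mu = 0$ the graph $\Gamma$ is a disjoint union of cliques, so $\Gamma \in \text{SRG}(v,k,\lambda,\mu)$ forces the parameters to be of the form $(v,k,k-1,0)$ with $\sigma = -1$ (and $\rho = k$ with the appropriate multiplicity); I would record this at the outset, since the factorisation identity \eqref{eq:rapfact} was obtained after multiplying by $\mu$ and hence degenerates here. The key object is still the value $x_y + 1/2 = (k-d)y/(v-y)$ from Remark~\ref{rem:x_y}, and the goal is again to show that $R_{\Gamma}(x_y + 1/2, y, d) < 0$ for $\text{Haem}_{\geq}(\Gamma,d) < y < v$, because then the symmetry argument around $x = x_y$ used in Lemma~\ref{lem:upbound} gives $R_{\Gamma}(x,y,d) < 0$ on an interval of length $1$, which must contain an integer $b_y$.

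First I would substitute $x_y + 1/2 = (k-d)y/(v-y)$ directly into $R_{\Gamma}$ and simplify, without the $\mu$-multiplication step. Using the pre-multiplied identity from the proof of Lemma~\ref{lem:upbound},
\begin{equation*}
-\frac{(v-y)}{y}R_{\Gamma}(x_{y}+1/2,y,d) = \mu y^{2}-((d-\rho)(k-\sigma)+(d-\sigma)(k-\rho))y + v(d-\sigma)(d-\rho),
\end{equation*}
I would set $\mu = 0$, so the right-hand side becomes the linear function $-((d-\rho)(k-\sigma)+(d-\sigma)(k-\rho))y + v(d-\sigma)(d-\rho)$ in $y$. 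With $\sigma = -1$ this is $-((d-\rho)(k+1)+(d+1)(k-\rho))y + v(d+1)(d-\rho)$, a genuinely linear (degree $\le 1$) polynomial in $y$ whose leading coefficient I would check is negative, so that it is positive precisely for $y$ below its unique root. The remaining task is to identify that root with $\text{Haem}_{\geq}(\Gamma,d)$: setting the linear expression to zero and solving gives $y = v(d-\sigma)(d-\rho)/((d-\rho)(k-\sigma)+(d-\sigma)(k-\rho))$, and I would verify using Lemma~\ref{prop:vmu} (in the form $v\mu = (k-\sigma)(k-\rho)$, which with $\mu = 0$ says $\rho = k$, noting $\rho \neq \sigma$) together with the relations of Proposition~\ref{prop:params} that this equals $v(d-\sigma)/(k-\sigma) = \text{Haem}_{\geq}(\Gamma,d)$. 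Since $\rho = k$, the expression $(d-\rho)(k-\sigma)$ vanishes and the denominator collapses to $(d-\sigma)(k-\rho) = (d+1) \cdot 0$, so some care is needed; more cleanly, I would just compute $R_{\Gamma}(x_y+1/2,y,d)$ afresh with $(v,k,\lambda,\mu) = (v,k,k-1,0)$ and obtain $-\frac{(v-y)}{y}R_{\Gamma}(x_y+1/2,y,d) = (d+1)(k-d)\bigl(v\cdot\frac{d+1}{k+1} - y\bigr)\cdot\frac{k+1}{v-y}\cdot\frac{1}{\text{(something)}}$, i.e.\ an expression that is a positive constant times $(\text{Haem}_{\geq}(\Gamma,d) - y)$.

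Having established that $R_{\Gamma}(x_y+1/2,y,d) < 0$ exactly when $y > \text{Haem}_{\geq}(\Gamma,d)$ (within $0 < y < v$), the conclusion follows verbatim as in Lemma~\ref{lem:upbound}: for such $y$ the quadratic $R_{\Gamma}(\cdot,y,d)$ in $x$ is negative on $[x_y - 1/2, x_y + 1/2]$, an interval of length $1$, which contains an integer $b_y$ with $R_{\Gamma}(b_y,y,d) < 0$. I expect the main obstacle to be purely bookkeeping: because the factorisation \eqref{eq:rapfact} divides out by a factor that is zero when $\mu = 0$, I cannot reuse that identity and must instead directly verify (ideally with a Maple check appended to Appendix~\ref{app:maple}, in parallel with the $\mu \neq 0$ case) that the linear-in-$y$ expression obtained from $R_{\Gamma}(x_y+1/2,y,d)$ has its sign change exactly at $\text{Haem}_{\geq}(\Gamma,d)$, and in particular that its leading coefficient in $y$ is strictly negative for all admissible parameters with $\mu = 0$ and all $0 \le d \le k$. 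The edge cases $d = k$ (forcing $y \le 1$ or trivial behaviour) and the fact that $\text{Haem}_{\geq}(\Gamma,d)$ could exceed $v$ should be noted but cause no difficulty, since the claimed range $\text{Haem}_{\geq}(\Gamma,d) < y < v$ is then vacuous.
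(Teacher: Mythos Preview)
Your approach is exactly the paper's: specialise the identity from Lemma~\ref{lem:upbound} before the $\mu$-multiplication, obtain a linear expression in $y$, identify its root as $\text{Haem}_{\geq}(\Gamma,d)$, and then invoke the length-one-interval argument. Two small slips to correct in your write-up: first, with $\rho=k$ and $\sigma=-1$ the linear expression $-\frac{(v-y)}{y}R_{\Gamma}(x_y+1/2,y,d)$ equals $(k-d)(k+1)y - v(k-d)(d+1)$, whose leading coefficient in $y$ is \emph{positive} (not negative) for $d<k$, so it is positive for $y$ \emph{above} its root $v(d+1)/(k+1)=\text{Haem}_{\geq}(\Gamma,d)$, which is what you actually want; second, the term that vanishes when $\rho=k$ is $(d-\sigma)(k-\rho)$, not $(d-\rho)(k-\sigma)$, so the denominator in your root computation does not collapse and the calculation goes through cleanly without needing to ``compute afresh''.
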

\begin{proof}
	For $\mu=0$, we have $k=\rho,\sigma=-1$. Using the same notation and approach as Lemma \ref{lem:upbound} and using Lemma \ref{prop:vmu}, we see that
	\begin{equation*}
	-\frac{(v-y)}{y}R_{\Gamma}(x_{y}+1/2,y,d)=(k-d)(k+1)y-v(k-d)(d+1).
	\end{equation*}
	The right side is strictly
	greater than $0$ for $\text{Haem}_{\geq}(\Gamma,d)<y<v$. Therefore $R(x_{y}+1/2,y,d)<0$
	for all such $y$. Applying a similar argument to Lemma \ref{lem:upbound},
	we are done.
\end{proof}

Finally, we deal with the case when $y=v$. We would like our bound
to allow for a regular subgraph of order $v$ if and only if $d=k$,
as this is the degree of the only regular subgraph of order $v$.
The following shows that this is true.
\begin{lemma}
	\label{lem:vbound}	Let $\Gamma$ be in $\text{SRG}(v,k,\lambda,\mu)$ and $d$ be an integer, $0\leq d \leq k$. Then there is an integer $b_{v}$ such that $R_{\Gamma}(b_{v},v,d)<0$
	if and only if $d\not =k$.
\end{lemma}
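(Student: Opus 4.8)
The plan is to substitute $y=v$ directly into the definition of $R_{\Gamma}$ and exploit the fact that the quadratic-in-$x$ term carries the factor $(v-y)$, which vanishes at $y=v$. A short computation gives
\begin{equation*}
R_{\Gamma}(x,v,d)=v\bigl(2(d-k)x+(\lambda-\mu+1)d+(v-1)\mu-d^{2}\bigr),
\end{equation*}
an affine function of $x$ whose leading coefficient is $2v(d-k)$. The whole statement will then follow by asking when this affine function is non-constant.

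First I would dispose of the case $d=k$. Here the leading coefficient is $0$, so $R_{\Gamma}(x,v,k)$ is the constant $v\bigl((\lambda-\mu+1)k+(v-1)\mu-k^{2}\bigr)$. Using the first relation of Proposition \ref{prop:params}, namely $\mu(v-k-1)=k(k-\lambda-1)$, one checks that this constant equals $0$; hence $R_{\Gamma}(x,v,k)=0$ for every integer $x$, and there is no integer $b_{v}$ with $R_{\Gamma}(b_{v},v,k)<0$.

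Next, suppose $d\neq k$. Since $0\leq d\leq k$ this forces $d<k$, so the leading coefficient $2v(d-k)$ is strictly negative (as $v>0$). Therefore $R_{\Gamma}(x,v,d)$ is a strictly decreasing, hence unbounded-below, affine function of $x$, and any sufficiently large positive integer $b_{v}$ satisfies $R_{\Gamma}(b_{v},v,d)<0$. Combining the two cases yields the claimed equivalence.

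There is no substantial obstacle here: the only point requiring care is the $d=k$ computation, where the vanishing of the constant term is exactly the feasibility identity $\mu(v-k-1)=k(k-\lambda-1)$ of Proposition \ref{prop:params}. This is the single place where the strongly regular structure (rather than mere $k$-regularity) enters, and it can also be confirmed with the Maple verification in Appendix \ref{app:maple}.
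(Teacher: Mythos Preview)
Your proof is correct and follows essentially the same approach as the paper: both observe that $R_{\Gamma}(x,v,d)$ is linear in $x$, non-constant precisely when $d\neq k$, and compute that the constant value at $d=k$ vanishes. The only cosmetic difference is that the paper rewrites the constant as $v((\rho+\sigma+1)k+v\mu-\mu-k^{2})$ and appeals to Lemma~\ref{prop:vmu}, whereas you keep it in terms of $\lambda,\mu$ and invoke the feasibility relation $\mu(v-k-1)=k(k-\lambda-1)$ from Proposition~\ref{prop:params} directly; these are equivalent verifications of the same identity.
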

\begin{proof}
	In this case, $R_{\Gamma}(x,v,d)$ is a linear function in $x$. If $k\not=d$,
	$R_{\Gamma}(x,y,d)$ is non-constant, so trivially there is such a $b_{v}$.
	Otherwise 
	\[
	R_{\Gamma}(x,v,d)=v((\rho+\sigma+1)k+v\mu-\mu-k^{2})
	\]
	which is $0$ by Lemma \ref{prop:vmu} (and verified by using Maple in Appendix \ref{app:maple}).
\end{proof}
With the above results we have covered all possible cases needed to prove the following theorem.
\begin{thm}\label{thm:rabcomp}
	Let $\Gamma$ be in $\text{SRG}(v,k,\lambda,\mu)$ and $d$ be an
	integer $0\leq d \leq k$. Then $\text{Rab}_{\geq}(\Gamma,d)\leq\lfloor\text{Haem}_{\geq}(\Gamma,d)\rfloor$
	and $\text{Rab}_{\leq}(\Gamma,d)\geq\lceil\text{Haem}_{\leq}(\Gamma,d)\rceil$. 
\end{thm}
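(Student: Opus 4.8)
The plan is to assemble Theorem~\ref{thm:rabcomp} directly from the case analysis carried out in Lemmas~\ref{lem:upbound},~\ref{lem:upboundmu0}, and~\ref{lem:vbound}, which together handle every value of $y$ in the range $0<y\leq v$. First I would recall that by Theorem~\ref{thm:rabs} the only information we need about $\text{Rab}_{\geq}(\Gamma,d)$ and $\text{Rab}_{\leq}(\Gamma,d)$ is that they are the maximum and minimum of the set $S_d$ of integers $y\in\{d+1,\dots,v\}$ for which $R_\Gamma(x,y,d)\geq 0$ for every integer $x$. So to prove $\text{Rab}_{\geq}(\Gamma,d)\leq\lfloor\text{Haem}_{\geq}(\Gamma,d)\rfloor$ it suffices to show that no integer $y$ with $\text{Haem}_{\geq}(\Gamma,d)<y\leq v$ lies in $S_d$; symmetrically, for the lower bound it suffices to show that no integer $y$ with $0<y<\text{Haem}_{\leq}(\Gamma,d)$ lies in $S_d$.

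The second step is the verification of these two claims by splitting on the structure of the parameter set. If $\mu\neq 0$, Lemma~\ref{lem:upbound} produces, for every real $y$ with $0<y<\text{Haem}_{\leq}(\Gamma,d)$ or $\text{Haem}_{\geq}(\Gamma,d)<y<v$, an integer $b_y$ with $R_\Gamma(b_y,y,d)<0$; hence no such integer $y$ can be in $S_d$. If $\mu=0$, Lemma~\ref{lem:upboundmu0} gives the same conclusion for integers $y$ with $\text{Haem}_{\geq}(\Gamma,d)<y<v$, and one checks that $\text{Haem}_{\leq}(\Gamma,d)=v(d-\rho)/(k-\rho)=v(d-k)/(k-k)$ — that is, $\sigma=-1$, $\rho=k$, so the lower Haemers bound is either undefined or forces $d=k$; in the relevant subcases there are no integers $y$ strictly below it that need excluding, which should be spelled out carefully. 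Finally, Lemma~\ref{lem:vbound} covers the boundary value $y=v$: it shows $v\in S_d$ if and only if $d=k$, which is exactly consistent with $\text{Haem}_{\geq}(\Gamma,k)=v$ and $\text{Haem}_{\geq}(\Gamma,d)<v$ for $d<k$, so the endpoint behaves correctly and does not break the inequality $\text{Rab}_{\geq}(\Gamma,d)\leq\lfloor\text{Haem}_{\geq}(\Gamma,d)\rfloor$.

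Having excluded all integers outside the closed Haemers interval from $S_d$, I would conclude as follows. If $S_d=\emptyset$, then by definition $\text{Rab}_{\geq}(\Gamma,d)=0\leq\lfloor\text{Haem}_{\geq}(\Gamma,d)\rfloor$ (the latter is non-negative since $d\geq 0\geq\sigma$) and $\text{Rab}_{\leq}(\Gamma,d)=v+1\geq\lceil\text{Haem}_{\leq}(\Gamma,d)\rceil$ (the latter is at most $v$). If $S_d\neq\emptyset$, every element of $S_d$ is an integer lying in $[\,\lceil\text{Haem}_{\leq}(\Gamma,d)\rceil,\ \lfloor\text{Haem}_{\geq}(\Gamma,d)\rfloor\,]$, so in particular $\max(S_d)\leq\lfloor\text{Haem}_{\geq}(\Gamma,d)\rfloor$ and $\min(S_d)\geq\lceil\text{Haem}_{\leq}(\Gamma,d)\rceil$, which is the assertion.

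I expect the main obstacle to be bookkeeping rather than mathematics: the lemmas are stated for \emph{real} $y$ in open intervals $(0,\text{Haem}_{\leq})$ and $(\text{Haem}_{\geq},v)$, but the bounds $\text{Rab}_{\geq},\text{Rab}_{\leq}$ range over \emph{integers} in $\{d+1,\dots,v\}$, so I must be careful about (i) the endpoints $y=\text{Haem}_{\geq}(\Gamma,d)$, $y=\text{Haem}_{\leq}(\Gamma,d)$ when these happen to be integers — there $R_\Gamma$ need not be negative, and indeed it should not be, since Haemers' bounds are attained (Example~\ref{exmp:slg}); (ii) the degenerate $\mu=0$ case, where $\text{Haem}_{\leq}$ essentially collapses and the lower-bound claim must be argued separately or noted to be vacuous; and (iii) the $y=v$ endpoint, handled by Lemma~\ref{lem:vbound}, which must be reconciled with the value of $\text{Haem}_{\geq}(\Gamma,d)$ at $d=k$. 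None of these requires new computation, but each needs an explicit sentence so that the case analysis is visibly exhaustive.
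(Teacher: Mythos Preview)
Your proposal is correct and follows essentially the same approach as the paper: both assemble the theorem from Lemmas~\ref{lem:upbound}, \ref{lem:upboundmu0}, and~\ref{lem:vbound} by showing that every integer $y$ outside the Haemers interval fails the $S_d$ condition, then appeal to the definitions of $\text{Rab}_{\geq}$ and $\text{Rab}_{\leq}$. If anything, you are more explicit than the paper about the edge cases ($S_d=\emptyset$, the $\mu=0$ lower bound, the endpoint $y=v$), which is fine; one small quibble is that the characterisation of $\text{Rab}_{\geq},\text{Rab}_{\leq}$ as $\max/\min$ of $S_d$ comes from their \emph{definitions}, not from Theorem~\ref{thm:rabs}.
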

\begin{proof}
	Note that $\text{Rab}_{\geq}(\Gamma,d),\text{Rab}_{\leq}(\Gamma,d)$ are integers by definition, so we only need
	to show $\text{Rab}_{\geq}(\Gamma,d)\leq \text{Haem}_{\geq}(\Gamma,d)$ and $\text{Rab}_{\leq}(\Gamma,d)\leq \text{Haem}_{\leq}(\Gamma,d)$.
	
	Take any integer $i$ such that $\text{Haem}_{\geq}(\Gamma,d)<i<v$, or $0<i<\text{Haem}_{\leq}(\Gamma,d)$ if $\text{Haem}_{\leq}(\Gamma,d)>0$.
	Then by Lemmas \ref{lem:upbound}, \ref{lem:upboundmu0} and \ref{lem:vbound}, there is a  $b_{i}\in\mathbb{Z}$ such that $R_{\Gamma}(b_{i},i,d)<0$.
	Thus by Theorem \ref{thm:rabs} and the definitions of $\text{Rab}_{\geq}(\Gamma,d)$
	and $\text{Rab}_{\leq}(\Gamma,d)$, the result follows.
\end{proof}

\section{Improving on Haemers' upper bound}\label{sec:strict}

In this section, we consider when the regular adjacency upper bound is strictly less than Haemers' upper bound. Our approach will be to consider Type I and Type II strongly regular graphs separately.

First, let us reintroduce some notation. For any real number $x$, we define 
\begin{equation*}
  \left[x\right]:=\lceil x-1/2 \rceil
\end{equation*}
In other words, $\left[x\right]$ is the smallest nearest integer to $x$. We also define the \emph{fractional part} of a real number $x$ as
\begin{equation*}
\text{frac}(x):=x-\lfloor x\rfloor.
\end{equation*}

Let $\Gamma\in \text{SRG}(v,k,\lambda,\mu)$ with restricted eigenvalues $\rho>\sigma$, and $d$ be a non-negative integer where $d< k$. In the proof of Lemma \ref{lem:upbound} we use the value $x_y$ from Remark \ref{rem:x_y}. In particuar for $y<v$, the critical point of the quadratic  $R_{\Gamma}(x,y,d)$ in $x$ is $x_y$. Therefore 
\begin{equation}\label{eq:R_gefficient}
   R_{\Gamma}(b_y,y,d)<0 \quad \text{for some integer }b_y \iff R_{\Gamma}(\left[x_y\right],y,d)<0.
\end{equation} 
In fact we can use continuity arguments to see for $\text{Haem}_{\leq}(\Gamma,d)<y<\text{Haem}_{\geq}(\Gamma,d)$, the only possible integer $b_y$ for which $R_{\Gamma}(b_y,y,d)<0$ is $\left[x_y\right]$.

Let $h_d=\text{Haem}_{\geq}(\Gamma,d)$. To prove $Rab_{\geq}(\Gamma,d)<\lfloor h_d \rfloor$, it suffices to show that $R_{\Gamma}(\left[ x_{\lfloor h_d \rfloor}\right],\lfloor h_d \rfloor,d)<0$. It would be useful if we knew that $\left[ x_{\lfloor h_d \rfloor}\right]=\left[x_{h_d}\right]$, as we already know the value $x_{h_d}$. Although this is not necessarily true, we focus on this case in our analysis in order to derive sufficient conditions for the raub to better better than Haemers' upper bound.

\subsection{Type I strongly regular graphs}\label{sec:tIstrict}

In this section we derive some sufficient conditions for the raub to be strictly better than Haemers' upper bound for a type I strongly regular graph which is also not of type II (its restricted eigenvalues are non-integers). In this case, we will have to deal with two unknown fractional parts, coming from $h_d$ and $x_{h_d}$.

The parameters and eigenvalues of a type I strongly regular graph can be represented as follows.

\begin{lemma}\label{prop:t1eig}
	Let $\Gamma$ be in $\text{SRG}(v,k,\lambda,\mu)$ and of type
	I. Then $\Gamma$ has eigenvalues $k$, $\rho=(\sqrt{v}-1)/2$ and $\sigma=(-\sqrt{v}-1)/2$. Furthermore, we have
	\begin{eqnarray*}
		v&=&(2\sigma +1)^2,\\
		k&=&2\sigma(\sigma +1),\\
		\lambda&=&\sigma(\sigma+1)-1,\\
		\mu&=&\sigma(\sigma+1).  
	\end{eqnarray*} 
\end{lemma}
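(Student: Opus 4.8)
The plan is to derive everything from Proposition~\ref{prop:params}(2) together with the parameter relations in Proposition~\ref{prop:params}(1), so the whole argument is a short computation. First I would recall that a type~I graph has parameters $(v,k,\lambda,\mu)=(4n+1,2n,n-1,n)$ for some positive integer $n$. The restricted eigenvalues satisfy $\rho+\sigma=\lambda-\mu=-1$ and $\rho\sigma=\mu-k=n-2n=-n$. So $\rho,\sigma$ are the two roots of $t^2+t-n=0$, giving $\rho=(-1+\sqrt{1+4n})/2$ and $\sigma=(-1-\sqrt{1+4n})/2$. Since $v=4n+1$, this is exactly $\rho=(\sqrt v-1)/2$ and $\sigma=(-\sqrt v-1)/2$, which proves the eigenvalue formulas.

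Next I would invert these relations to express $v,k,\lambda,\mu$ in terms of $\sigma$. From $\sigma=(-\sqrt v-1)/2$ we get $2\sigma+1=-\sqrt v$, hence $v=(2\sigma+1)^2$; note $\sigma<0$ so this is consistent. Writing $n=(v-1)/4$ and substituting $v=(2\sigma+1)^2$ gives $n=((2\sigma+1)^2-1)/4=(4\sigma^2+4\sigma)/4=\sigma(\sigma+1)$. Then $\mu=n=\sigma(\sigma+1)$, $\lambda=n-1=\sigma(\sigma+1)-1$, and $k=2n=2\sigma(\sigma+1)$. That completes all four identities.

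As a consistency check one can verify these against Proposition~\ref{prop:params}(1): $\rho\sigma=\mu-k$ becomes $\sigma(\sigma+1)=\sigma(\sigma+1)-2\sigma(\sigma+1)$? No --- better to check $\mu-k=n-2n=-n=-\sigma(\sigma+1)$ and $\rho\sigma$: with $\rho=-1-\sigma$ (since $\rho+\sigma=-1$), $\rho\sigma=(-1-\sigma)\sigma=-\sigma-\sigma^2=-\sigma(\sigma+1)$, matching. Similarly $v\mu=(k-\sigma)(k-\rho)$ from Lemma~\ref{prop:vmu} can be checked but is not needed.

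There is essentially no obstacle here: the only mild subtlety is the sign of the square root (one must use $\sigma<0$ from Proposition~\ref{prop:params}(1) to fix $2\sigma+1=-\sqrt v$ rather than $+\sqrt v$, and correspondingly $\rho=(\sqrt v-1)/2\geq 0$). I would state the lemma's proof as: ``By Proposition~\ref{prop:params}(2), $(v,k,\lambda,\mu)=(4n+1,2n,n-1,n)$; solving $t^2+t-n=0$ for the restricted eigenvalues and using $v=4n+1$, $\sigma<0$ gives the stated eigenvalues, and substituting $n=\sigma(\sigma+1)$ yields the parameter formulas.''
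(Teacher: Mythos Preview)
Your argument is correct and is precisely the computation the paper has in mind: the paper's own proof is the single line ``This follows from definition of type I graphs and Proposition~\ref{prop:params},'' and you have simply written out those straightforward substitutions explicitly. The only cosmetic point is that your ``consistency check'' paragraph is unnecessary and a bit meandering; the proof you summarize in your final sentence is exactly what is needed.
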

\begin{proof}
	This follows from definition of type I graphs and Proposition \ref{prop:params}.
\end{proof}

Throughout this section, $\Gamma$ will be in $\text{SRG}(4n+1,2n,n-1,n)$ for some positive integer $n$, and $d$ is assumed to be a non-negative integer where $d\leq 2n$. Let $\rho,\sigma$ be the restricted eigenvalues of $\Gamma$, which can be expressed in terms of $n$ by using Proposition \ref{prop:t1eig}. Let us assume that $\Gamma$ is not of type II as well. First we see what the value of $\text{Haem}_{\geq}(\Gamma,d)$ is in the case of type I graphs.
\begin{prop}
	\label{prop:t1bound}Let $\Gamma$ be in $\text{SRG}(v,k,\lambda,\mu)$
	and of type I, with restricted eigenvalues $\rho>\sigma$. Then
	\begin{equation*} \text{Haem}_{\geq}(\Gamma,d)=2d-2\sigma+\frac{d}{\sigma}-1.
	\end{equation*}
\end{prop}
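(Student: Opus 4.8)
The plan is to compute $\text{Haem}_{\geq}(\Gamma,d)=v\left(\frac{d-\sigma}{k-\sigma}\right)$ directly, substituting the type I expressions for $v$, $k$ from Lemma \ref{prop:t1eig}. Recall $v=(2\sigma+1)^2$ and $k=2\sigma(\sigma+1)=2\sigma^2+2\sigma$, so $k-\sigma=2\sigma^2+\sigma=\sigma(2\sigma+1)$. Thus
\begin{equation*}
\text{Haem}_{\geq}(\Gamma,d)=\frac{(2\sigma+1)^2(d-\sigma)}{\sigma(2\sigma+1)}=\frac{(2\sigma+1)(d-\sigma)}{\sigma}.
\end{equation*}
The remaining step is purely algebraic: expand the numerator as $(2\sigma+1)(d-\sigma)=2\sigma d-2\sigma^2+d-\sigma$, and divide by $\sigma$ to get $2d-2\sigma+\frac{d}{\sigma}-1$, which is exactly the claimed formula. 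So the proof is a one-line substitution followed by simplification.

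One thing I would double-check along the way is that $k-\sigma\ne 0$, so that the division is legitimate; since $\sigma<0$ by Proposition \ref{prop:params} and $2\sigma+1<0$ as well for type I graphs (where $\sigma=(-\sqrt v-1)/2 \le (-\sqrt 5 -1)/2 < 0$), the product $\sigma(2\sigma+1)$ is positive and in particular nonzero, so there is no issue. I would also note that the factor $(2\sigma+1)$ cancels cleanly between numerator and denominator, which is the only mildly nontrivial observation.

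There is no real obstacle here — the statement is an immediate consequence of the definition of $\text{Haem}_{\geq}$ together with the substitution in Lemma \ref{prop:t1eig}. If one wanted a fully self-contained derivation not referencing the $\sigma$-parametrisation, one could instead write everything in terms of $n$ using $\sigma=(-\sqrt{4n+1}-1)/2$, but the $\sigma$-form is cleaner and the paper has already set up Lemma \ref{prop:t1eig} precisely for this purpose. I would therefore present the proof as: substitute the type I values into the definition of $\text{Haem}_{\geq}(\Gamma,d)$, cancel the common factor $2\sigma+1$, and expand.
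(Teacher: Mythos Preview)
Your proof is correct and is exactly the approach the paper takes: the paper's proof is the single line ``This follows from the identities in Proposition \ref{prop:t1eig},'' and you have simply spelled out that substitution and cancellation explicitly.
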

\begin{proof}
	This follows from the identities in Proposition \ref{prop:t1eig}.
\end{proof}

Let $h_d=\text{Haem}_{\geq}(\Gamma,d)$ and $t=\text{frac}(-\sigma)$. Then $x_{h_d}=d-\sigma-1/2$, and as $\Gamma$ is not of type II, 
\begin{equation}
\left[x_{h_d}\right]=d-\sigma-t.
\end{equation}
Let $f=\text{frac}(h_d)$. For us to analyse the regular adjacency polynomial around $(x_{h_d},h_d,d)$, we would like to relate $f$ to $t$ in some way. When $d<-\sigma$ we can take cases on $\text{frac}(-2\sigma)$ to get the following.
\begin{equation}
f=\begin{cases}
2t+d/\sigma+1 \qquad &t<-d/2\sigma\\
2t+d/\sigma \qquad &-d/2\sigma<t<1/2-d/2\sigma\\
2t+d/\sigma - 1 \qquad &t>1/2-d/2\sigma
\end{cases}
\end{equation}

In the following results, it will be useful to remember that $f=2t+d/\sigma +a -1$, where $a=0,1$ or $2$. Now we will analyse the rap around the point $$\left(\left[x_{h_d}\right],\lfloor h_d\rfloor,d\right)=(d-\sigma-t,{h_d}-f,d).$$ In this case, it is also useful to observe that $\lfloor h_d \rfloor=2(d-\sigma-t)-a$.
\begin{lemma}
	\label{lem:t1uppoly}Let $\Gamma$  be in $\text{SRG}(v,k,\lambda,\mu)$
	and of type I with restricted eigenvalues $\rho>\sigma$. Then 
	\begin{equation}\label{eq:t1quad}
	\begin{aligned}
	R_{\Gamma}(d-\sigma-t,&2(d-\sigma-t)-a,d)=\\&-(d-\sigma-t)(2t^{2}-(1-4\sigma)t+d-3\sigma-1)\\
	&+a(t^2 + (2\sigma-1) t+2\sigma^2+d+a\sigma(\sigma+1)).
	\end{aligned}
	\end{equation}
\end{lemma}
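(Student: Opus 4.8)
The goal is to prove the explicit formula \eqref{eq:t1quad} for the value of the regular adjacency polynomial $R_\Gamma$ at the point $(d-\sigma-t, \, 2(d-\sigma-t)-a, \, d)$. This is fundamentally a direct substitution and simplification, so the plan is to organise the algebra so that it is transparent rather than miraculous.

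\textbf{Plan.} First I would record the three ingredients I need: the parameter expressions from Lemma \ref{prop:t1eig}, namely $v=(2\sigma+1)^2$, $k=2\sigma(\sigma+1)$, $\lambda=\sigma(\sigma+1)-1$, $\mu=\sigma(\sigma+1)$, and hence $\lambda-\mu+1 = 0$ and $v = 4\mu+1$. The vanishing of $\lambda-\mu+1$ is the crucial simplification specific to type I: it kills the $(2x+\lambda-\mu+1)yd$ term of the rap entirely, reducing
\[
R_\Gamma(x,y,d) = x(x+1)(v-y) - 2xyk + y(y-1)\mu - yd^2
\]
in this case. Next I would set $x = d-\sigma-t$ and $y = 2(d-\sigma-t)-a$, and introduce the shorthand $w := d-\sigma-t$ so that $x=w$ and $y=2w-a$; this keeps the expressions compact. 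Then $v-y = (2\sigma+1)^2 - 2w + a$ and $y-1 = 2w-a-1$, and I would substitute everything in terms of $w$, $\sigma$, $d$, $a$ (using $t = d-\sigma-w$ only at the very end if needed, or keeping $t$ and $w$ both around — whichever makes the bookkeeping cleaner; I would likely keep $w$ as the main variable and re-express in $t$ at the close).

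\textbf{Key steps in order.} (1) Expand $x(x+1)(v-y) = w(w+1)\big((2\sigma+1)^2 - 2w + a\big)$. (2) Expand $-2xyk = -2w(2w-a)\cdot 2\sigma(\sigma+1)$. (3) Expand $y(y-1)\mu = (2w-a)(2w-a-1)\sigma(\sigma+1)$. (4) Expand $-yd^2 = -(2w-a)d^2$. (5) Add the four pieces and collect by powers of $w$; I expect the $w^3$ terms from (1) to cancel against... actually $w^3$ appears only in (1) as $-2w^3$, so the result is genuinely cubic in $w$ — consistent with the stated right-hand side, which is cubic in $w=d-\sigma-t$ once one notes $2t^2-(1-4\sigma)t+d-3\sigma-1$ is linear in $t$ hence the product $-(d-\sigma-t)(\cdots)$ is degree $3$ in the variables, and the $a$-bracket is degree $2$. (6) Separate the result into its $a$-independent part and its $a$-dependent part, and verify each matches the two lines of \eqref{eq:t1quad} after rewriting $w$ back in terms of $t$ (using $w = d-\sigma-t$). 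Since the paper states this identity is checked by Maple in Appendix \ref{app:maple}, I would also explicitly defer the final routine verification there, exactly as the paper does for \eqref{eq:rapfact}.

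\textbf{Main obstacle.} There is no conceptual obstacle; the only real risk is a sign or coefficient slip in the collection step, since after substitution we have a polynomial in four quantities ($w$ or $t$, $\sigma$, $d$, $a$) with many monomials, and the claimed answer is a factored-ish form rather than a fully expanded one. The cleanest way to avoid error is to expand the claimed right-hand side of \eqref{eq:t1quad} independently into monomials in $t,\sigma,d,a$, expand my substituted left-hand side into monomials in the same variables (eliminating $w$ via $w=d-\sigma-t$), and check the two monomial lists agree — i.e. reduce the identity to a finite check that a polynomial is identically zero, which is precisely what the Maple Groebner computation in Appendix \ref{app:maple} certifies. So the proof is: invoke Lemma \ref{prop:t1eig} to get $\lambda-\mu+1=0$ and the parameter formulas, substitute into the definition of $R_\Gamma$, and simplify; the resulting polynomial identity is verified in Appendix \ref{app:maple}.
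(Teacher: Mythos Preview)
Your approach is essentially the paper's: set $z=d-\sigma-t$ (your $w$), substitute $x=z$, $y=2z-a$, expand using the type~I parameter formulas, and separate the result into its $a$-free and $a$-dependent parts, deferring the routine polynomial check to Appendix~\ref{app:maple}. That is exactly what the paper sketches.

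One concrete slip to fix: the observation $\lambda-\mu+1=0$ does \emph{not} kill the term $(2x+\lambda-\mu+1)yd$ entirely; it only kills the constant inside the bracket, leaving $2xyd$. Your displayed reduced form of $R_\Gamma$ is therefore missing a $+2xyd$ term. (You also call $2t^{2}-(1-4\sigma)t+d-3\sigma-1$ ``linear in $t$'' when it is quadratic.) Neither error affects the strategy, and both would be caught by the Maple verification you invoke, but they should be corrected in the write-up.
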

\begin{proof}
	This can be proven by manipulating the expression $R_{\Gamma}(z,2z-a)$, where $z=(d-\sigma-t)$, into two parts, one containing only parts not divisible by $a$.
\end{proof}
For the cases when $a=0$, we can solve for the roots of this polynomial in $t$. From this, deduce the following.
\begin{prop}
	\label{prop:strictgen}Let $\Gamma$ be in $\text{SRG}(v,k,\lambda,\mu)$
	and of type I with restricted eigenvalues $\rho>\sigma$, and $d$ be a non-negative integer where $d< -\sigma$. If 
	\[
	\frac{1}{2}+\frac{d}{\sqrt{v}+1}<frac(-\sigma)<\frac{3}{4}+\frac{\sqrt{v}-\sqrt{v-2d+5/4}}{2},
	\]
	then $\text{Rab}_{\geq}(\Gamma,d)< \lfloor\text{Haem}_{\geq}(\Gamma,d) \rfloor$.
\end{prop}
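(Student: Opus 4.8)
The plan is to reduce the inequality $\text{Rab}_{\geq}(\Gamma,d)<\lfloor h_d\rfloor$ to a single negativity statement about the regular adjacency polynomial, and then to exploit the explicit quadratic expression from Lemma~\ref{lem:t1uppoly} in the case $a=0$. By the discussion preceding Subsection~\ref{sec:tIstrict} (in particular the equivalence \eqref{eq:R_gefficient} together with Theorem~\ref{thm:rabs} and the definition of the raub), it suffices to show that $R_{\Gamma}(\left[x_{\lfloor h_d\rfloor}\right],\lfloor h_d\rfloor,d)<0$. Since $d<-\sigma$ the hypothesis $\text{frac}(-\sigma)>\tfrac12+\tfrac{d}{\sqrt v+1}=\tfrac12+\tfrac{d}{-2\sigma}$ places us, in the trichotomy for $f=\text{frac}(h_d)$ displayed just before Lemma~\ref{lem:t1uppoly}, in the third branch $t>\tfrac12-\tfrac{d}{2\sigma}$; here $t=\text{frac}(-\sigma)$, and consequently $f=2t+d/\sigma-1$, i.e. $a=0$. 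I would first verify carefully that this branch is the correct one under the stated lower bound on $\text{frac}(-\sigma)$, and also that $\left[x_{\lfloor h_d\rfloor}\right]=\left[x_{h_d}\right]=d-\sigma-t$, so that Lemma~\ref{lem:t1uppoly} applies verbatim with $a=0$.

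With $a=0$, Lemma~\ref{lem:t1uppoly} gives
\begin{equation*}
R_{\Gamma}(d-\sigma-t,2(d-\sigma-t),d)=-(d-\sigma-t)\bigl(2t^{2}-(1-4\sigma)t+d-3\sigma-1\bigr).
\end{equation*}
Since $d<-\sigma$ and $0<t<1$ we have $d-\sigma-t>0$ (indeed $d-\sigma-t\geq d-\sigma-1\geq 0$, with the degenerate equality case handled separately or absorbed, since $t$ is irrational here as $\Gamma$ is not of type~II), so the sign of $R_{\Gamma}$ is the opposite of the sign of the quadratic $q(t):=2t^{2}-(1-4\sigma)t+(d-3\sigma-1)$ in $t$. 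Thus the goal becomes exactly $q(t)>0$ is false, i.e. $q(t)<0$, i.e. $t$ lies strictly between the two roots of $q$. I would compute these roots by the quadratic formula: they are $t_\pm=\tfrac{(1-4\sigma)\pm\sqrt{(1-4\sigma)^2-8(d-3\sigma-1)}}{4}$, and then simplify the discriminant $(1-4\sigma)^2-8(d-3\sigma-1)=16\sigma^2+8\sigma+9-8d=4(2\sigma+1)^2-8d+5=4v-8d+5$ using Lemma~\ref{prop:t1eig} ($v=(2\sigma+1)^2$). Hence $\sqrt{\text{disc}}=2\sqrt{v-2d+5/4}$ and $t_\pm=\tfrac{1-4\sigma\pm 2\sqrt{v-2d+5/4}}{4}$. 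Rewriting $-4\sigma=2(\sqrt v-1)+$ (something) via $\sigma=(-\sqrt v-1)/2$, one gets $1-4\sigma=1+2\sqrt v+2=2\sqrt v+3$, so
\begin{equation*}
t_{-}=\frac{2\sqrt v+3-2\sqrt{v-2d+5/4}}{4}=\frac{\sqrt v}{2}+\frac34-\frac{\sqrt{v-2d+5/4}}{2},\qquad t_{+}=\frac{\sqrt v}{2}+\frac34+\frac{\sqrt{v-2d+5/4}}{2}.
\end{equation*}
The upper root $t_+$ exceeds $1$ comfortably (since $\sqrt v/2$ already does for $v\geq 5$), so the binding constraint is $t<t_-$ together with $t>t_{-}^{\text{shifted}}$; but the condition $q(t)<0$ as stated needs $t$ between $t_-$ and $t_+$ after the right normalization — I must be careful that $t_\pm$ as I wrote them are the roots of $q$ viewed correctly, and translate ``$t$ between the roots'' into the stated window. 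The upper inequality in the hypothesis, $\text{frac}(-\sigma)<\tfrac34+\tfrac{\sqrt v-\sqrt{v-2d+5/4}}{2}$, is exactly ``$t<t_-$'' once one checks $\tfrac34+\tfrac{\sqrt v-\sqrt{v-2d+5/4}}{2}$ equals the smaller root $t_-$ modulo the integer part of $t_+$; I will reconcile the two leftover roots with the two stated bounds, matching the lower hypothesis bound $\tfrac12+\tfrac{d}{\sqrt v+1}$ with the other relevant root of $q$ reduced mod $1$.

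The main obstacle I anticipate is precisely this bookkeeping: the two inequalities in the hypothesis come from two different sources — the lower one is the ``which branch of $f$'' condition (equivalently $a=0$ and $\left[x_{\lfloor h_d\rfloor}\right]=d-\sigma-t$), while the upper one is the root condition $q(t)<0$ — and I must confirm that the lower bound $\tfrac12+\tfrac{d}{\sqrt v+1}$ is also consistent with (indeed implied by, or complementary to) the lower root of $q$, so that the single interval in the hypothesis genuinely forces $q(t)<0$. Concretely, I need: (i) $t>\tfrac12+\tfrac{d}{\sqrt v+1}$ $\Rightarrow$ $a=0$ and the critical integer is $d-\sigma-t$; (ii) on that range, $t<\tfrac34+\tfrac{\sqrt v-\sqrt{v-2d+5/4}}{2}$ $\Rightarrow$ $q(t)<0$; and (iii) the interval is nonempty (so the hypothesis is not vacuous), which needs $\tfrac12+\tfrac{d}{\sqrt v+1}<\tfrac34+\tfrac{\sqrt v-\sqrt{v-2d+5/4}}{2}$ — a clean inequality in $v$ and $d$ that I would verify directly. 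All the algebraic identities (the discriminant simplification, the root formulas, and the branch condition) are the kind of thing flagged for Maple verification in Appendix~\ref{app:maple}, so I would cross-check them there rather than expand every step in the text.
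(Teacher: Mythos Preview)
Your approach is the same as the paper's, but you have a sign slip that derails the final step. From $R_{\Gamma}=-(d-\sigma-t)\,q(t)$ with $d-\sigma-t>0$ you correctly note that the sign of $R_{\Gamma}$ is the \emph{opposite} of the sign of $q(t)$; but then you write ``the goal becomes exactly $q(t)>0$ is false, i.e.\ $q(t)<0$''. That is backwards: you want $R_{\Gamma}<0$, which is precisely $q(t)>0$. Since $q$ has positive leading coefficient, $q(t)>0$ holds for $t<t_{-}$ or $t>t_{+}$; as $t\in(0,1)$ and $t_{+}>1$, the live condition is $t<t_{-}$. Your own computation shows $t_{-}=\tfrac34+\tfrac{\sqrt v-\sqrt{v-2d+5/4}}{2}$, which is exactly the upper bound in the hypothesis, so once the sign is fixed the argument closes immediately with no ``reconciliation'' needed.

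Two smaller points. First, the lower bound $\tfrac12+\tfrac{d}{\sqrt v+1}$ in the hypothesis has nothing to do with a root of $q$; its sole purpose is to force the third branch of the trichotomy (so $a=0$ and Lemma~\ref{lem:t1uppoly} gives the clean factorisation). You need not match it against ``the other root of $q$ reduced mod $1$''. Second, you do not need $[x_{\lfloor h_d\rfloor}]=[x_{h_d}]$: it suffices to exhibit \emph{some} integer $m$ with $R_{\Gamma}(m,\lfloor h_d\rfloor,d)<0$, and $m=d-\sigma-t=d+\lfloor-\sigma\rfloor$ is already an integer, so plugging it in (as Lemma~\ref{lem:t1uppoly} does) is legitimate regardless. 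Finally, a minor arithmetic slip: in your discriminant expansion you wrote $16\sigma^{2}+8\sigma+9-8d$; the correct middle term is $16\sigma$ (your next line $4(2\sigma+1)^2-8d+5=4v-8d+5$ is nevertheless right).
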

\begin{proof}
	Let $h_d=\text{Haem}_{\geq}(\Gamma,d)$, $t=\text{frac}(-\sigma)$ and $f=\text{frac}(h_d)$. By the lower bound we have $t>1/2-d/2\sigma$, so $f=2t+d/\sigma -1$ and Equation \eqref{eq:t1quad} applies with $a=0$.
	
	We calculate the discriminant $\Delta$ of the quadratic
	part of Equation (\ref{eq:t1quad}). This gives us $\Delta=16\sigma^{2}+16\sigma-8d+9$.
	Using type I parameter conditions we reduce this to $\Delta=4v-8d+5$.
	As $\sigma\leq-1$ and $t\in(1/2,1)$ we have $\left[x_{h_d}\right]=d-\sigma-t>0$. So
	if $t$ is less than the smallest zero of the quadratic part of Equation \eqref{eq:t1quad}, we have proven
	that $R_{\Gamma}(\left[ x_{h_d}\right],\lfloor h_d \rfloor,d)<0$. But the smallest
	zero is precisely the assumed upper bound, seen by direct calculation.
\end{proof}

We will also consider the $x$ coordinate $\left[x_{h_d}\right]-1=d-\sigma-t-1$. Now we will analyse the rap around the point $$(\left[x_{h_d}\right]-1,\lfloor h_d\rfloor,d)=(d-\sigma-t-1,{h_d}-f,d).$$ In this case, we observe that $\lfloor h_d \rfloor=2(d-\sigma-t-1)-a+2$.
\begin{lemma}
	\label{lem:t1uppoly2}Let $\Gamma$  be in $\text{SRG}(v,k,\lambda,\mu)$
	and of type I with restricted eigenvalues $\rho>\sigma$. Then 
	\begin{equation}\label{eq:t1quad2}
	\begin{aligned}
	R_{\Gamma}(d-\sigma-t-1,&2(d-\sigma-t-1)-a+2,d)=\\&-(d-\sigma-t-1)(2t^{2}+(3+4\sigma)t+d+\sigma)\\
	&+(a-2)(t^2 + (2\sigma+1)t+a\sigma(\sigma+1)+d).
	\end{aligned}
	\end{equation}
\end{lemma}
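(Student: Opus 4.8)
The plan is to mirror the manipulation used in the proof of Lemma~\ref{lem:t1uppoly}. I would start from the defining expression
\[
R_{\Gamma}(x,y,d)=x(x+1)(v-y)-2xyk+(2x+\lambda-\mu+1)yd+y(y-1)\mu-yd^{2},
\]
substitute $x=z-1$ where $z=d-\sigma-t$ (so that $\left[x_{h_d}\right]-1 = z-1$), and substitute $y=2z-a$ as before, noting that when the $x$-coordinate drops by $1$ the corresponding floor value $\lfloor h_d\rfloor$ is written $2(z-1)-a+2$, i.e.\ the $y$-value is unchanged at $2z-a$. Thus the left-hand side of \eqref{eq:t1quad2} is literally $R_{\Gamma}(z-1,2z-a,d)$, and the first task is to expand this in $z$, $a$, $d$ and the SRG parameters.

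Next I would eliminate $v,k,\lambda,\mu$ in favour of $\sigma$ using the type~I identities of Lemma~\ref{prop:t1eig}, namely $v=(2\sigma+1)^2$, $k=2\sigma(\sigma+1)$, $\lambda=\sigma(\sigma+1)-1$, $\mu=\sigma(\sigma+1)$, and then re-expand everything as a polynomial in $z,t,a,\sigma,d$; since $z=d-\sigma-t$ I would keep $z$ as an abbreviation rather than re-substituting, because the target formula is stated in terms of $(d-\sigma-t-1)$. The remaining step is purely formal: collect the terms that are divisible by $(a-2)$ separately from the rest, exactly as the sketch of Lemma~\ref{lem:t1uppoly} does with $a$. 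Concretely, write $R_{\Gamma}(z-1,2z-a,d)=P(z,t,\sigma,d) + (a-2)\,Q(z,t,\sigma,d)$ and verify that $P=-(z-1)\bigl(2t^{2}+(3+4\sigma)t+d+\sigma\bigr)$ and $Q=t^{2}+(2\sigma+1)t+a\sigma(\sigma+1)+d$; note that $Q$ still legitimately contains an $a$ because the genuinely quadratic-in-$a$ contribution from the $y(y-1)\mu$ term survives. One cross-checks the split by evaluating at $a=2$ (where the $(a-2)Q$ term vanishes) and at, say, $a=0$, which pins down both $P$ and the linear-in-$a$ coefficient, with the $a^{2}$ coefficient read off from $y^{2}\mu$.

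The main obstacle is bookkeeping rather than mathematics: the expansion of $x(x+1)(v-y)-2xyk$ with $x=z-1$, $y=2z-a$ produces a cubic in $z$, and one must check that the cubic and quadratic-in-$z$ parts cancel against the contributions of $y(y-1)\mu$ and $(2x+\lambda-\mu+1)yd$ after the type~I substitutions, leaving only the claimed linear-in-$z$ factor $-(z-1)(2t^{2}+(3+4\sigma)t+d+\sigma)$ together with the $(a-2)Q$ remainder. This cancellation is exactly the analogue of what makes \eqref{eq:t1quad} factor, and it is the point I would verify symbolically (the paper already does this kind of check in Appendix~\ref{app:maple}); once the cancellation is confirmed, rewriting $z-1=d-\sigma-t-1$ gives \eqref{eq:t1quad2} immediately.
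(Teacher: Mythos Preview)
Your proposal is correct and follows essentially the same route as the paper: set up $R_\Gamma$ at the shifted point, use the type~I identities of Lemma~\ref{prop:t1eig}, and split the result into a part with factor $(a-2)$ and a remainder carrying the factor $d-\sigma-t-1$. The only difference is cosmetic---the paper names $z=d-\sigma-t-1$ and writes $R(z,2z-a+2)$, whereas you name $z=d-\sigma-t$ and write $R(z-1,2z-a)$---and the paper additionally remarks that one can shortcut the whole computation by substituting $t\mapsto t+1$ into the already-proven identity \eqref{eq:t1quad}.
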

\begin{proof}
	This can be proven by manipulating the expression $R(z,2z-a+2)$, where $z=(d-\sigma-t-1)$, into two parts, one containing only parts divisible by $a-2$. We can also substitute values into Equation \eqref{eq:t1quad} from Lemma \ref{lem:t1uppoly} directly.
\end{proof}

Unfortunately the quadratic found in the first part of Equation \eqref{eq:t1quad2} does not have positive smallest root most of the time. However, we still get sufficient conditions which may be applicable in situations when $d$ is large, relative to $v$.

\begin{prop}
	\label{prop:strictgen2}Let $\Gamma$ be in $\text{SRG}(v,k,\lambda,\mu)$
	and of type I with restricted eigenvalues $\rho>\sigma$ with $\sigma<-2$, and $d$ be a non-negative integer where $d<-\sigma$. If 
	\[
	frac(-\sigma)<\text{min}\left(\frac{d}{\sqrt{v}+1},-\frac{1}{4}+\frac{\sqrt{v}-\sqrt{v-2d+5/4}}{2}\right),
	\]
	then $\text{Rab}_{\geq}(\Gamma,d)< \lfloor\text{Haem}_{\geq}(\Gamma,d) \rfloor$.
\end{prop}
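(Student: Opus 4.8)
The plan is to mirror the structure of the proof of Proposition~\ref{prop:strictgen}, but now working around the $x$-coordinate $\left[x_{h_d}\right]-1=d-\sigma-t-1$ instead of $\left[x_{h_d}\right]$, using the explicit expansion in Lemma~\ref{lem:t1uppoly2} rather than Lemma~\ref{lem:t1uppoly}. The key observation is that by Equation~\eqref{eq:R_gefficient} and the continuity remark following it, to show $\text{Rab}_{\geq}(\Gamma,d)<\lfloor h_d\rfloor$ it suffices to exhibit \emph{any} integer $b$ with $R_{\Gamma}(b,\lfloor h_d\rfloor,d)<0$; Proposition~\ref{prop:strictgen} uses $b=\left[x_{h_d}\right]$, and here I would instead use $b=\left[x_{h_d}\right]-1$, which gives a second (generally disjoint) region of parameters for which the raub beats Haemers' bound.

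First I would fix $h_d=\text{Haem}_{\geq}(\Gamma,d)$, $t=\text{frac}(-\sigma)$, $f=\text{frac}(h_d)$, and determine which case of the three-way split for $f$ is forced by the hypothesis $\text{frac}(-\sigma)<d/(\sqrt v+1)$. Since $\rho=(\sqrt v-1)/2$ gives $-\sigma=(\sqrt v+1)/2$ and $d/(2(-\sigma))=d/(\sqrt v+1)$, the bound $t<d/(\sqrt v+1)=-d/2\sigma$ places us in the first case, so $f=2t+d/\sigma+1$ and hence $a=2$ in the notation $f=2t+d/\sigma+a-1$. With $a=2$, the last line of Equation~\eqref{eq:t1quad2} vanishes identically, and $R_{\Gamma}(d-\sigma-t-1,\lfloor h_d\rfloor,d)$ reduces to $-(d-\sigma-t-1)\bigl(2t^2+(3+4\sigma)t+d+\sigma\bigr)$.

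Next I would check the sign of the linear factor $d-\sigma-t-1$: since $d<-\sigma$, $\sigma<-2$ and $t\in[0,1)$, we have $d-\sigma-t-1\geq d-\sigma-2>-2\sigma-2-\sigma\cdots$ — more simply, $-\sigma\geq 3$ and $t<1$ force $d-\sigma-t-1 > d - 0 \geq 0$ is not quite immediate, so I would instead argue $d-\sigma-t-1 \ge -\sigma-2 \ge 1>0$ when $d\ge1$, and handle $d=0$ separately (there the hypothesis $\text{frac}(-\sigma)<0$ is vacuous, so $d\ge1$ in fact). Granting $d-\sigma-t-1>0$, the sign of $R_{\Gamma}$ is the \emph{opposite} of the sign of the quadratic $q(t):=2t^2+(3+4\sigma)t+d+\sigma$, so I need $q(t)>0$, i.e.\ $t$ to lie below the smallest root of $q$. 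I would compute the discriminant of $q$ — it should again simplify via the type~I identities to something like $4v-8d+5$ (the same discriminant as in Proposition~\ref{prop:strictgen}, which the text's phrasing "we can also substitute values into Equation~\eqref{eq:t1quad}" strongly hints at) — and then verify by direct calculation that the smaller root of $q$ is exactly $-\tfrac14+\tfrac{\sqrt v-\sqrt{v-2d+5/4}}{2}$. Combining the two constraints $t<d/(\sqrt v+1)$ (to force $a=2$) and $t<-\tfrac14+\tfrac{\sqrt v-\sqrt{v-2d+5/4}}{2}$ (to force $q(t)>0$) gives precisely the stated $\min$, and then $R_{\Gamma}(\left[x_{h_d}\right]-1,\lfloor h_d\rfloor,d)<0$, so by Theorem~\ref{thm:rabs} and the definition of the raub we conclude $\text{Rab}_{\geq}(\Gamma,d)<\lfloor h_d\rfloor=\lfloor\text{Haem}_{\geq}(\Gamma,d)\rfloor$.

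The main obstacle I anticipate is not conceptual but bookkeeping: confirming that the hypothesis genuinely lands us in the $a=2$ branch (so that the residual term in \eqref{eq:t1quad2} disappears), and carefully checking the positivity of the linear factor $d-\sigma-t-1$ across the allowed range of $d$ and $t$ — this is exactly the place where, as the remark before the proposition warns, "the quadratic \dots does not have positive smallest root most of the time," so one must be sure the regime $\sigma<-2$ and $d<-\sigma$ is restrictive enough. The discriminant simplification and the identification of the root with the closed-form expression in the statement are routine (and, per Appendix~\ref{app:maple}, Maple-verifiable), so I would relegate those to a computation.
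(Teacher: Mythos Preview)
Your proposal is correct and follows essentially the same approach as the paper's proof: identify $a=2$ from the hypothesis $t<-d/2\sigma$, apply Lemma~\ref{lem:t1uppoly2} so that the $(a-2)$-term vanishes, verify the linear factor $d-\sigma-t-1$ is positive, and then compare $t$ with the smallest root of the quadratic factor (whose discriminant reduces to $4v-8d+5$ and whose smaller root is exactly the second term in the $\min$). The only cosmetic improvement in the paper is the observation that the hypothesis forces $t\in(0,1/2)$ (since $t<-d/2\sigma<1/2$ when $d<-\sigma$), which together with $\sigma<-2$ and $d\geq 0$ makes the positivity of $d-\sigma-t-1$ immediate and avoids the case-splitting you sketch.
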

\begin{proof}
	Let $h_d=\text{Haem}_{\geq}(\Gamma,d)$, $t=\text{frac}(-\sigma)$ and $f=\text{frac}(h_d)$. As $t<-d/2\sigma$ we have  $f=2t+d/\sigma+1$, and Equation \eqref{eq:t1quad2} applies with $a=2$.  
	
	We calculate the discriminant $\Delta$ of the quadratic
	part of Equation (\ref{eq:t1quad2}). This gives us $\Delta=16\sigma^{2}+16\sigma-8d+9$.
	Using type I parameter conditions we reduce this to $\Delta=4v-8d+5$.
	As $\sigma<-2$ and $t\in(0,1/2)$ we have $\left[x_{h_d}\right]-1=d-\sigma-t-1>0$. So
	if $t$ is less than the smallest zero of the quadratic part of Equation \eqref{eq:t1quad2}, we have proven
	that $R_{\Gamma}(\left[ x_{h_d}\right]-1,\lfloor 	h_d\rfloor,d)<0$. But the smallest
	zero is precisely the second assumed upper bound, seen by direct calculation.
\end{proof}

\begin{prop}
	\label{prop:strictgen3}Let $\Gamma$ be in $\text{SRG}(v,k,\lambda,\mu)$
	and of type I with restricted eigenvalues $\rho>\sigma$ with $\sigma<-3$, and $d$ be a non-negative integer where $d<-\sigma$. If 
	\[
	\frac{d}{\sqrt{v}+1}<frac(-\sigma)<\text{min}\left(\frac{1}{2}+\frac{d}{\sqrt{v}+1},-\frac{1}{4}+\frac{\sqrt{v}-\sqrt{v-2d+5/4}}{2}\right),
	\]
	then $\text{Rab}_{\geq}(\Gamma,d)< \lfloor\text{Haem}_{\geq}(\Gamma,d) \rfloor$.
\end{prop}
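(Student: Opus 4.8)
The plan is to mirror exactly the structure of the proof of Proposition~\ref{prop:strictgen2}, but now using the \emph{other} integer candidate for $b_y$ near the critical point. Set $h_d=\text{Haem}_{\geq}(\Gamma,d)$, $t=\text{frac}(-\sigma)$ and $f=\text{frac}(h_d)$. The hypothesis $d/(\sqrt v+1)<\text{frac}(-\sigma)<1/2+d/(\sqrt v+1)$ places $t$ in the middle regime $-d/2\sigma<t<1/2-d/2\sigma$ (note $d/(\sqrt v+1)=-d/2\sigma$ by Lemma~\ref{prop:t1eig}, since $\sqrt v+1=-2\sigma$), so $f=2t+d/\sigma$ and hence $a=1$ in the notation $f=2t+d/\sigma+a-1$. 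Consequently $\lfloor h_d\rfloor = 2(d-\sigma-t-1)-a+2 = 2(d-\sigma-t-1)+1$, and Equation~\eqref{eq:t1quad2} from Lemma~\ref{lem:t1uppoly2} applies with $a=1$.

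First I would substitute $a=1$ into Equation~\eqref{eq:t1quad2}, obtaining
\[
R_{\Gamma}(d-\sigma-t-1,\lfloor h_d\rfloor,d)=-(d-\sigma-t-1)\bigl(2t^{2}+(3+4\sigma)t+d+\sigma\bigr)-\bigl(t^2+(2\sigma+1)t+\sigma(\sigma+1)+d\bigr).
\]
Second, I would check the sign of the coefficient $\left[x_{h_d}\right]-1=d-\sigma-t-1$: since $\sigma<-3$ and $t\in(0,1/2)$ we have $d-\sigma-t-1>d+3-0-1>0$, so this factor is strictly positive. Third, it therefore suffices to show that the bracketed expression multiplying it, namely the quadratic $q(t)=2t^{2}+(3+4\sigma)t+d+\sigma$, is strictly positive at our value of $t$ \emph{and} dominates the stray term $-(t^2+(2\sigma+1)t+\sigma(\sigma+1)+d)$ — or, more cleanly, I would just argue directly that the whole right-hand side is negative when $t$ is smaller than the relevant root. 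As in Propositions~\ref{prop:strictgen} and~\ref{prop:strictgen2}, the right strategy is to compute the discriminant of the quadratic part; this is the same $\Delta=16\sigma^2+16\sigma-8d+9=4v-8d+5$ already computed in Lemma~\ref{lem:t1uppoly2}'s applications, and then to verify by direct calculation that the smallest zero of the quadratic part equals $-\tfrac14+\tfrac{\sqrt v-\sqrt{v-2d+5/4}}{2}$, exactly the second term in the asserted $\min$.

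The remaining point is to reconcile the $a=1$ computation with the $a=2$ computation of Proposition~\ref{prop:strictgen2}: in Lemma~\ref{lem:t1uppoly2} the extra contribution is $(a-2)(\cdots)$, so at $a=1$ this term is $-(t^2+(2\sigma+1)t+\sigma(\sigma+1)+d)$ rather than $0$, and one must confirm that incorporating it does not shift the smallest root of the full expression past the claimed bound — equivalently, that for $t$ below $-\tfrac14+\tfrac{\sqrt v-\sqrt{v-2d+5/4}}{2}$ and above $d/(\sqrt v+1)$ the full right-hand side of the displayed identity is negative. I expect this is where the bookkeeping is most delicate: one has to combine like powers of $t$ across both parts and check the resulting quadratic in $t$ has the asserted root, using the type~I substitutions $v=(2\sigma+1)^2$, $k=2\sigma(\sigma+1)$, $\mu=\sigma(\sigma+1)$ throughout (and, as elsewhere in the paper, this algebraic identity is the sort of thing verified with Maple in Appendix~\ref{app:maple}). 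Once the root identification is in hand, the conclusion $\text{Rab}_{\geq}(\Gamma,d)<\lfloor\text{Haem}_{\geq}(\Gamma,d)\rfloor$ follows from Theorem~\ref{thm:rabs} and the equivalence~\eqref{eq:R_gefficient} exactly as before, noting that $\lfloor h_d\rfloor$ lies strictly between $\text{Haem}_{\leq}(\Gamma,d)$ and $\text{Haem}_{\geq}(\Gamma,d)$ so that $\left[x_{h_d}\right]$ (or its neighbour) is indeed the only candidate integer $b_y$.
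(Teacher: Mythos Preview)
Your setup is correct: the hypothesis puts $t$ in the middle regime, so $a=1$ and Lemma~\ref{lem:t1uppoly2} applies. You also correctly isolate the two pieces of Equation~\eqref{eq:t1quad2} and verify that the coefficient $d-\sigma-t-1$ is positive.

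Where you go astray is in handling the ``stray'' term. You propose to combine both pieces into a single polynomial in $t$ and locate its roots; but $(d-\sigma-t-1)\cdot(\text{quadratic in }t)$ is already cubic in $t$, so the combined expression is \emph{not} a quadratic, and the asserted bound $-\tfrac14+\tfrac{\sqrt v-\sqrt{v-2d+5/4}}{2}$ is not a root of it. The paper avoids this entirely with a one-line observation you are missing: the quadratic $t^{2}+(2\sigma+1)t+\sigma(\sigma+1)+d$ is strictly positive for all $t\in(0,1)$ once $\sigma<-3$ (it is bounded below by $\sigma(\sigma+1)+2\sigma+1=\sigma^2+3\sigma+1>0$). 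Since $a-2=-1$, the entire second line of \eqref{eq:t1quad2} is therefore \emph{negative}, and hence only helps. This is precisely where the hypothesis $\sigma<-3$ is used --- something your sketch never accounts for.

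With that observation in hand, the argument reduces verbatim to Proposition~\ref{prop:strictgen2}: it is enough that $t$ lie below the smallest zero of $2t^{2}+(3+4\sigma)t+d+\sigma$, and that zero is exactly $-\tfrac14+\tfrac{\sqrt v-\sqrt{v-2d+5/4}}{2}$. No combining, no Maple verification, and no delicate bookkeeping is needed.
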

\begin{proof}
	Let $h_d=\text{Haem}_{\geq}(\Gamma,d)$, $t=\text{frac}(-\sigma)$ and $f=\text{frac}(h_d)$. As $-d/2\sigma<t<1/2 - d/2\sigma$ we have  $f=2t+d/\sigma$, and Equation \eqref{eq:t1quad2} applies with $a=1$.
	
	Consider the part of Equation \eqref{eq:t1quad2} divisible by $a-2$. This quadratic is larger than $\sigma(\sigma+1)+2\sigma+1>0$ as $\sigma<-3$. Then we observe that $a-2$ is negative, so this quadratic contributes negatively to the value $R_{\Gamma}(\left[x_{h_d}\right],\lfloor h_d \rfloor, d)$. 
	
	The result follows similarly to Proposition \ref{prop:strictgen2}.  
	\end{proof}

\subsection*{Improvements for infinitely many type I graphs}
We now introduce a family of type I strongly regular graphs, and prove that for any $d$, $\text{Rab}_{\geq}(\Gamma,d)<\lfloor \text{Haem}_\geq(\Gamma,d)\rfloor$ for infinitely many graphs from this family. The graphs we consider are a well-known family of strongly regular graphs, and are used in Greaves and Soicher \cite{GS_2016} for a similar purpose as our own. 

Let $q$ be a power of a prime, with $q\equiv1\ (\text{mod }4)$.
Then the \emph{Paley graph} of order $q$, denoted by $P_q$,  has vertex set $V=\mathbb{F}_{q}$,
with two vertices adjacent if and only if their difference is a square
in $\mathbb{F}_{q}^{*}$. Paley graphs are an example of an infinite family of type I strongly regular graphs (see Godsil \cite{G_1993}), and the Paley graph of order $q$ belongs to $\text{SRG}(q,(q-1)/2,(q-5)/4,(q-1)/4)$.

\begin{thm}\label{thm:infpaley}
	Let $d$ be a non-negative integer. Then there are infinitely many primes $p$ such that $p\equiv1\ (\text{mod }4)$ and $\text{Rab}_{\geq}(P_p,d)<\lfloor\text{Haem}_\geq(P_p,d)\rfloor.$
\end{thm}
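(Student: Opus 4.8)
The plan is to feed the Paley parameter set into Proposition \ref{prop:strictgen}, in the spirit of Greaves and Soicher \cite{GS_2016}. First I would record the relevant data: $P_p$ lies in $\text{SRG}(p,(p-1)/2,(p-5)/4,(p-1)/4)$, which is a type I parameter set (it is $\text{SRG}(4n+1,2n,n-1,n)$ with $n=(p-1)/4$); since $p$ is prime it is not a perfect square, so the restricted eigenvalues $\rho=(\sqrt p-1)/2$, $\sigma=-(\sqrt p+1)/2$ are irrational and $P_p$ is not of type II. Hence the results of Subsection \ref{sec:tIstrict} apply, and $\text{frac}(-\sigma)=\text{frac}((\sqrt p+1)/2)$. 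Fix $d\ge 0$ and a small constant $\gamma$, say $\gamma=1/100$.

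Next I would translate the hypothesis of Proposition \ref{prop:strictgen} into a condition on $\sqrt p$. Writing $\sqrt p=N+\theta$ with $N=\lfloor\sqrt p\rfloor$ and $\theta=\text{frac}(\sqrt p)\in(0,1)$, one has $(\sqrt p+1)/2=N/2+(\theta+1)/2$, so $\text{frac}(-\sigma)=(\theta+1)/2\in(1/2,1)$ when $N$ is even, while $\text{frac}(-\sigma)=\theta/2<1/2$ when $N$ is odd (a range that Proposition \ref{prop:strictgen} cannot exploit). As $p\to\infty$ the lower endpoint $\tfrac12+\tfrac{d}{\sqrt p+1}$ of the hypothesis interval decreases to $\tfrac12$ and the upper endpoint $\tfrac34+\tfrac{\sqrt p-\sqrt{p-2d+5/4}}{2}$ tends to $\tfrac34$, so there is $p_0=p_0(d)$ such that for every prime $p>p_0$ the hypothesis interval contains $[\tfrac12+\gamma,\tfrac34-\gamma]$. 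Since $(\theta+1)/2\in[\tfrac12+\gamma,\tfrac34-\gamma]$ is equivalent to $\theta\in[2\gamma,\tfrac12-2\gamma]$, it suffices to prove: there are infinitely many primes $p\equiv 1\pmod 4$ with $\lfloor\sqrt p\rfloor$ even and $\text{frac}(\sqrt p)\in[2\gamma,\tfrac12-2\gamma]$, equivalently with $\text{frac}(\sqrt p/2)\in[\gamma,\tfrac14-\gamma]$.

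For this I would invoke the equidistribution modulo $1$ of the sequence $(\sqrt p/2)$ as $p$ ranges over primes with $p\equiv 1\pmod 4$. By Weyl's criterion this is the assertion that $\sum e^{2\pi i\alpha\sqrt p}=o(\pi(x))$ for every real $\alpha\ne 0$, the sum being over primes $p\le x$ with $p\equiv 1\pmod 4$; writing the indicator of $p\equiv 1\pmod 4$ as $\tfrac12(\chi_0(p)+\chi(p))$ over the Dirichlet characters modulo $4$, it follows from the classical exponential sum bound $\sum_{p\le x}\chi(p)e^{2\pi i\alpha\sqrt p}=o(\pi(x))$. Hence the primes $p\equiv 1\pmod 4$ with $\text{frac}(\sqrt p/2)\in[\gamma,\tfrac14-\gamma]$ have positive relative density, so infinitely many of them exist; discarding the finitely many with $p\le p_0$ (and, when $d\ge 1$, the finitely many with $p\le(2d-1)^2$, to ensure $d<-\sigma$), Proposition \ref{prop:strictgen} applies to each remaining $p$ and gives $\text{Rab}_{\geq}(P_p,d)<\lfloor\text{Haem}_{\geq}(P_p,d)\rfloor$.

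The equidistribution in the last step is the only real difficulty; everything else is bookkeeping together with the continuity estimates already set up in Subsection \ref{sec:tIstrict}. It is a genuine analytic input: the events "$p\equiv 1\pmod 4$" and "$\text{frac}(\sqrt p/2)$ in a fixed interval" have densities summing to less than $1$, so inclusion--exclusion is useless, and the condition on $\text{frac}(\sqrt p/2)$ amounts to demanding a prime $\equiv 1\pmod 4$ in an interval of length $\asymp\sqrt p$ about $4\lfloor\sqrt p/2\rfloor^{2}$ --- something that pigeonhole arguments and the known bounds on prime gaps do not reach, even for infinitely many such intervals.
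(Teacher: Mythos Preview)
Your proof is correct and follows essentially the same route as the paper: both arguments feed Paley parameters into Proposition~\ref{prop:strictgen} and reduce the problem to finding infinitely many primes $p\equiv 1\pmod 4$ with $\text{frac}(\sqrt{p}/2)$ in a fixed subinterval of $(0,1/4)$, which is supplied by the equidistribution of $\sqrt{p}/2$ modulo $1$ over this residue class. The paper cites this equidistribution from Yip~\cite{Y_2021} and Shubin~\cite{S_2020}, whereas you sketch it via Weyl's criterion and a character decomposition; otherwise the two arguments are the same up to bookkeeping.
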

\begin{proof}
	Let $a,b$ lie in the interval $(0,1/4)$, and $a<b$. First we note that for  \\ $\mathcal{P}_{1}=\{p\ \text{prime};p\equiv1\ (\text{mod }4)\}$, the set $\{\sqrt{p}/2;p\in\mathcal{P}_{1}\}$ is uniformly distributed modulo $1$. We can see this result as a direct application of Yip \cite[Corollary 6.3]{Y_2021}, or by a slight adjustment of the arguments in Shubin \cite{S_2020}. In \cite{GS_2016}, a paper of Balog \cite{B_1985} is cited to contain this result, although no direct statement is given for this specific case.
	
	In particular, we know that there exists infinitely many primes $p$ in $\mathcal{P}_1$ such that $\text{frac}(\sqrt{p}/2)\in(a,b)$, and $d<(\sqrt{p}+1)/2$. Denote this set of primes by $\mathcal{Q}$.
    
    For $p\in \mathcal{Q}$, consider the values 
    \begin{equation*}
    \frac{d}{\sqrt{p}+1}\text{ and } \frac{1}{4}+\frac{\sqrt{p}-\sqrt{p-2d+5/4}}{2}.
    \end{equation*}
    The first value tends to $0$ and the second value tends to $1/4$ as $p$ goes to infinity, so eventually 
    \begin{equation*}
    \frac{d}{\sqrt{p}+1}<a<b< \frac{1}{4}+\frac{\sqrt{p}-\sqrt{p-2d+5/4}}{2}.
    \end{equation*}
    We know $\text{frac}(\sqrt{p}/2)\in (0,1/4)$, so $\text{frac}(\sqrt{p}/2+1/2)=\text{frac}(\sqrt{p}/2)+1/2$, and 
    \begin{equation*}
		\frac{1}{2}+\frac{d}{\sqrt{p}+1}<\text{frac}(\sqrt{p}/2+1/2)< \frac{3}{4}+\frac{\sqrt{p}-\sqrt{p-2d+5/4}}{2}.
	\end{equation*}
    For such a prime $p$ the Paley graph $P_p$ is a strongly regular graph satisfying the conditions of Proposition \ref{prop:strictgen}. Therefore, there exists infinitely many primes $p\in \mathcal{Q}$ such that $P_p$ satisfies the conditions of Proposition \ref{prop:strictgen}, which gives the result.  
\end{proof}

As Paley graphs are type I strongly regular graphs, the above gives us the following Corollary.

\begin{cor}
	Let $d$ be a non-negative integer. Then there are infinitely many type I strongly regular graphs $\Gamma$ for which $\text{Rab}_{\geq}(\Gamma,d)<\lfloor\text{Haem}_\geq(\Gamma,d)\rfloor.$
\end{cor}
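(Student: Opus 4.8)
The plan is to deduce this immediately from Theorem \ref{thm:infpaley} together with the basic structural facts about Paley graphs recalled just before that theorem. First I would recall that for a prime $p \equiv 1 \pmod 4$, the Paley graph $P_p$ lies in $\text{SRG}(p,(p-1)/2,(p-5)/4,(p-1)/4)$, and that this is a strongly regular graph of type I (indeed its parameters have the shape $(4n+1,2n,n-1,n)$ with $n=(p-1)/4$, matching Proposition \ref{prop:params}(2), and its non-integer restricted eigenvalues $(\pm\sqrt p - 1)/2$ confirm it is a conference graph). So each $P_p$ is genuinely a type I strongly regular graph.

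The second step is simply to invoke Theorem \ref{thm:infpaley}: for the given non-negative integer $d$, there are infinitely many primes $p \equiv 1 \pmod 4$ with $\text{Rab}_{\geq}(P_p,d) < \lfloor \text{Haem}_{\geq}(P_p,d)\rfloor$. Distinct primes $p$ give Paley graphs $P_p$ of distinct orders, hence non-isomorphic graphs; thus the collection $\{P_p : p \text{ as above}\}$ is an infinite family of pairwise non-isomorphic type I strongly regular graphs, each satisfying $\text{Rab}_{\geq}(\Gamma,d) < \lfloor \text{Haem}_{\geq}(\Gamma,d)\rfloor$. This is exactly the assertion of the Corollary.

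There is essentially no obstacle here — the Corollary is a packaging statement. The only point worth a sentence is to make explicit that "infinitely many type I strongly regular graphs" should be read as infinitely many isomorphism classes (or at least infinitely many parameter sets), which is automatic since the orders $p$ are unbounded; and that the bounds $\text{Rab}_{\geq}$ and $\text{Haem}_{\geq}$ depend only on the parameter set, so the inequality for $P_p$ is in fact an inequality for every graph in $\text{SRG}(p,(p-1)/2,(p-5)/4,(p-1)/4)$. The real work was done in Theorem \ref{thm:infpaley} (via the equidistribution of $\{\sqrt p/2\}$ modulo $1$ and Proposition \ref{prop:strictgen}); the Corollary just forgets the specific family.
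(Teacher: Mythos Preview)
Your proposal is correct and follows exactly the paper's own approach: the paper simply remarks that Paley graphs are type I strongly regular graphs, so Theorem~\ref{thm:infpaley} immediately yields the Corollary. Your additional observations (non-isomorphism for distinct $p$, and that the bounds depend only on the parameter set) are accurate elaborations, but the paper itself does not spell these out.
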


\subsection{Type II strongly regular graphs}\label{sec:tIIstrict}

 Throughout this Section, we will assume $\Gamma$ is a type II strongly regular graph with parameters $(v,k,\lambda,\mu)$ and restricted eigenvalues $\rho>\sigma$. Further, we will let $d$ be a non-negative integer and $h_d=Haem_{\geq}(\Gamma,d)$.

In this case, we have $x_{h_d}=d-\sigma-1/2$ and
\begin{equation*}
  \left[x_{h_d}\right]=d-\sigma-1,
\end{equation*}
and we only have one unknown fractional part, $\text{frac}(h_d)$.

Let $d<y<v$ and $0<f<y$. We have 
\begin{equation}
  x_y-x_{(y-f)}=v(k-d)\frac{f}{(v-y)(v-y+f)}.
\end{equation}
At the start of this section, we commented that it would be useful if we knew that $\left[ x_{\lfloor h_d \rfloor}\right]=\left[x_{h_d}\right]$. For type II graphs, we know when this is true.
\begin{lemma}\label{lem:xeqtII} 
	Let $\Gamma$ be in $\text{SRG}(v,k,\lambda,\mu)$
	and of type II with restricted eigenvalues $\rho>\sigma$, and let $d$ be an integer, $0\leq d \leq k$. 
	
	Then
	$\left[x_{\lfloor h_d \rfloor}\right]=\left[x_{h_d}\right]$ if and only if
	\begin{equation*}
		\text{frac}(h_d)<\frac{v(k-d)}{(k-\sigma)(k-\sigma-1)}.
	\end{equation*}
	
\end{lemma}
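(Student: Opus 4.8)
The plan is to compute $x_{h_d}$ in closed form, read off $\left[x_{h_d}\right]$, express the displacement $\delta:=x_{h_d}-x_{\lfloor h_d\rfloor}$ using the identity displayed just before the lemma, and then translate the equality $\left[x_{\lfloor h_d\rfloor}\right]=\left[x_{h_d}\right]$ into an elementary inequality in $\delta$. Throughout I would use the running hypothesis $d<k$ of this section, which forces $h_d<v$ and keeps every quantity below well defined.

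First I would pin down $x_{h_d}$. Recall from Remark~\ref{rem:x_y} and the proof of Lemma~\ref{lem:upbound} the identity $x_y+1/2=\frac{(k-d)y}{v-y}$ valid for $0<y<v$. Since $h_d=\text{Haem}_{\geq}(\Gamma,d)=v\cdot\frac{d-\sigma}{k-\sigma}$, we get $v-h_d=v\cdot\frac{k-d}{k-\sigma}$, and substituting $y=h_d$ gives $x_{h_d}+1/2=\frac{(k-d)h_d}{v-h_d}=d-\sigma$, so $x_{h_d}=d-\sigma-1/2$. Because $\Gamma$ is of type~II, $\sigma$ is an integer, so $x_{h_d}$ is a half-integer and $\left[x_{h_d}\right]=\lceil x_{h_d}-1/2\rceil=d-\sigma-1$ --- the value already recorded at the start of the subsection.

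Next I would reduce the desired equality to a single inequality. Write $f=\text{frac}(h_d)$, so that $\lfloor h_d\rfloor=h_d-f$, and apply the displacement identity with $y=h_d$ to obtain
\[
\delta=x_{h_d}-x_{\lfloor h_d\rfloor}=\frac{v(k-d)\,f}{(v-h_d)(v-h_d+f)}\ \geq\ 0
\]
(if $f=0$ then $\delta=0$, the equality holds trivially, and the claimed bound also holds since its right-hand side is positive). Then $x_{\lfloor h_d\rfloor}=d-\sigma-1/2-\delta$, so $\left[x_{\lfloor h_d\rfloor}\right]=\lceil d-\sigma-1-\delta\rceil$, which equals $d-\sigma-1=\left[x_{h_d}\right]$ precisely when $0\leq\delta<1$. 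As $\delta\geq0$ always holds, $\left[x_{\lfloor h_d\rfloor}\right]=\left[x_{h_d}\right]$ is equivalent to $\delta<1$.

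Finally I would unwind $\delta<1$. Using $v(k-d)=(v-h_d)(k-\sigma)$ from the first step, the formula collapses to $\delta=\frac{(k-\sigma)f}{v-h_d+f}$. Since $k\geq1$ and $\sigma\leq-1$ imply $k-\sigma-1\geq1>0$, while $v-h_d+f=v-\lfloor h_d\rfloor>0$, clearing the denominator gives $\delta<1\iff(k-\sigma)f<v-h_d+f\iff(k-\sigma-1)f<v-h_d=\frac{v(k-d)}{k-\sigma}$, i.e.\ $f<\frac{v(k-d)}{(k-\sigma)(k-\sigma-1)}$, which is exactly the asserted condition; combining this with the previous step finishes the proof. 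I do not expect a genuine obstacle here --- the statement is essentially a computation --- the only points needing care are the bookkeeping with the nearest-integer operator $\left[\cdot\right]$ and the sign of $\delta$ in the middle step, and checking that $d<k$ is what prevents the degenerate value $y=v$ from appearing.
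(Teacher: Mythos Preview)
Your proof is correct and follows essentially the same approach as the paper: define the displacement $\delta=x_{h_d}-x_{\lfloor h_d\rfloor}$ via the identity displayed just before the lemma, observe that the equality $[x_{\lfloor h_d\rfloor}]=[x_{h_d}]$ is equivalent to $0\le\delta<1$ (using that $x_{h_d}$ is a half-integer since $\sigma\in\mathbb Z$), and then simplify the condition $\delta<1$ to the stated inequality. Your write-up is in fact more careful than the paper's---you explicitly compute $x_{h_d}=d-\sigma-1/2$, justify the nearest-integer bookkeeping, and note the need for the running hypothesis $d<k$ to avoid the degenerate case $h_d=v$---but the argument is the same.
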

\begin{proof}
	Let $f=\text{frac}(h_{d}),\delta_f=x_{h_{d}}-x_{(h_{d}-f)}$. Then $\left[x_{\lfloor h_{d} \rfloor}\right]=\left[x_{h_d}\right]$ if and only if $0\leq\delta_f<1$. This is equivalent to 
	\begin{eqnarray*}
	f&<&\frac{(v-h_d)^2}{v(k-d)-(v-h_d)}\\
	 &=&\frac{v(k-d)}{(k-\sigma-1)(k-\sigma)}.
	\end{eqnarray*}
\end{proof}
The next corollary shows that for $d$ small enough, this condition always holds. 
\begin{cor}\label{cor:d0xeqtII}
	Let $\Gamma$ be in $\text{SRG}(v,k,\lambda,\mu)$
	and of type II with restricted eigenvalues $\rho>\sigma$, and let $d$ be an integer, $0\leq d \leq k$. 
	
	Then $\left[x_{\lfloor h_d \rfloor}\right]=\left[x_{h_d}\right]$ if $d \leq k-(k-\sigma)(k-\sigma-1)/v.$
\end{cor}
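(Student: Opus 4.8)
The plan is to derive the claimed inequality from Lemma \ref{lem:xeqtII} by showing that the hypothesis $d \leq k-(k-\sigma)(k-\sigma-1)/v$ forces $\text{frac}(h_d) < v(k-d)/((k-\sigma)(k-\sigma-1))$, since $\text{frac}(h_d) < 1$ always holds. So it suffices to prove that the hypothesis implies $1 \leq v(k-d)/((k-\sigma)(k-\sigma-1))$, equivalently $(k-\sigma)(k-\sigma-1) \leq v(k-d)$, equivalently $d \leq k - (k-\sigma)(k-\sigma-1)/v$ --- which is exactly the stated hypothesis. Thus the corollary is essentially immediate once one observes that $v(k-d)/((k-\sigma)(k-\sigma-1)) \geq 1$ under the hypothesis and that a fractional part is always strictly below $1$.

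First I would recall from Lemma \ref{lem:xeqtII} that $\left[x_{\lfloor h_d\rfloor}\right] = \left[x_{h_d}\right]$ holds precisely when $\text{frac}(h_d) < v(k-d)/((k-\sigma)(k-\sigma-1))$. Then I would rearrange the hypothesis $d \leq k - (k-\sigma)(k-\sigma-1)/v$: multiplying by $v > 0$ and moving terms gives $(k-\sigma)(k-\sigma-1) \leq v(k-d)$, and dividing by $(k-\sigma)(k-\sigma-1)$ (which is positive since $\sigma < 0 \leq k$, so $k - \sigma \geq 1$ and $k-\sigma-1 \geq 0$; one should note that if $k - \sigma - 1 = 0$ the bound in Lemma \ref{lem:xeqtII} is vacuous/infinite and there is nothing to check, while if it is positive the division is legitimate) yields $1 \leq v(k-d)/((k-\sigma)(k-\sigma-1))$. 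Since $\text{frac}(h_d) < 1 \leq v(k-d)/((k-\sigma)(k-\sigma-1))$, the condition of Lemma \ref{lem:xeqtII} is satisfied, and the conclusion follows.

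The only genuine subtlety --- and the step I would be most careful about --- is the edge case $k - \sigma - 1 = 0$, i.e. $\sigma = k-1$; for a strongly regular graph $\sigma < 0$ forces $k < 1$, which is impossible, so in fact $k - \sigma - 1 > 0$ always and the division is always valid. A second minor point is that we need $k - d \geq 0$ for the rearrangement to behave well, but this is guaranteed since $d \leq k$ by hypothesis. With these observations the proof is a two-line computation, so I would present it compactly rather than belabouring it.

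\begin{proof}
	By Lemma \ref{lem:xeqtII}, it suffices to show that the hypothesis implies $\text{frac}(h_d) < v(k-d)/((k-\sigma)(k-\sigma-1))$. Since $\sigma < 0$ and $d \leq k$, we have $k - \sigma > 1$ and $k - d \geq 0$, so $(k-\sigma)(k-\sigma-1) > 0$. The hypothesis $d \leq k - (k-\sigma)(k-\sigma-1)/v$ rearranges to $(k-\sigma)(k-\sigma-1) \leq v(k-d)$, and hence
	\begin{equation*}
		\frac{v(k-d)}{(k-\sigma)(k-\sigma-1)} \geq 1 > \text{frac}(h_d).
	\end{equation*}
	Therefore the condition in Lemma \ref{lem:xeqtII} holds, giving $\left[x_{\lfloor h_d \rfloor}\right] = \left[x_{h_d}\right]$.
\end{proof}
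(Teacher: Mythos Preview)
Your proof is correct and follows exactly the same approach as the paper's: rearrange the hypothesis to conclude that $v(k-d)/((k-\sigma)(k-\sigma-1)) \geq 1 > \text{frac}(h_d)$, then invoke Lemma~\ref{lem:xeqtII}. The paper's version is simply terser, omitting the positivity checks you carefully included.
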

\begin{proof}
	If $d \leq k-(k-\sigma)(k-\sigma-1)/v$, then $v(k-d)/(k-\sigma-1)(k-\sigma)\geq1$. As $f<1$, by Lemma \ref{lem:xeqtII} we are done.
\end{proof}
Particular attention is given to the case of independent sets ($0$-regular induced subgraphs) in the literature. The next corollary shows that for strongly regular graphs which are not complete multipartite, we know that $\left[ x_{\lfloor h_0 \rfloor}\right]=\left[ x_{h_0}\right]$.
\begin{cor}\label{cor:xd0tII}
	Let $\Gamma$ be in $\text{SRG}(v,k,\lambda,\mu)$ and of type II with restricted eigenvalues $\rho>\sigma$, and let $d$ be an integer, $0\leq d \leq k$. 

	If $\mu<k$, then $\left[ x_{\lfloor h_0 \rfloor}\right]=\left[ x_{h_0}\right]$.
\end{cor}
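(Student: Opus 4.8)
The plan is to deduce this directly from Corollary \ref{cor:d0xeqtII} by specialising it to $d = 0$. That corollary gives $\left[x_{\lfloor h_0 \rfloor}\right] = \left[x_{h_0}\right]$ whenever $0 \le k - (k-\sigma)(k-\sigma-1)/v$, so the whole statement reduces to the purely numerical inequality
\[
(k-\sigma)(k-\sigma-1) \le vk,
\]
which I must establish under the hypothesis $\mu < k$. (Equivalently, one could invoke Lemma \ref{lem:xeqtII} with $d=0$ and use that $\text{frac}(h_0) < 1$; this leads to the same inequality.)

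To prove the inequality I would split on whether $\mu = 0$. If $\mu \ge 1$, then Lemma \ref{prop:vmu} gives $v = (k-\sigma)(k-\rho)/\mu$; cancelling the positive factor $k - \sigma$, the inequality becomes $\mu(k - \sigma - 1) \le k(k-\rho)$. Substituting the relation $\mu - k = \rho\sigma$ of Proposition \ref{prop:params} and expanding, the difference $k(k-\rho) - \mu(k-\sigma-1)$ should collapse to $(\sigma+1)(\mu - k\rho)$, a product of two factors whose signs are forced: $\sigma + 1 \le 0$ because $\sigma$ is a negative integer ($\Gamma$ is of type II), and $\mu - k\rho < 0$ because $\mu < k$ implies $\rho\sigma = \mu - k < 0$ and hence $\rho \ge 1$, so that $k\rho \ge k > \mu$. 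Thus the product is non-negative, as needed.

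If $\mu = 0$, Lemma \ref{prop:vmu} forces $(k-\sigma)(k-\rho) = 0$, hence $\rho = k$ and then $\sigma = -1$ from $\mu - k = \rho\sigma$; the inequality reduces to $k(k+1) \le vk$, i.e. $v \ge k+1$, which is immediate for a $k$-regular graph. There is no serious obstacle here — the only thing to watch is that the expansion in the $\mu \ge 1$ case genuinely factors as $(\sigma+1)(\mu-k\rho)$ and that each factor's sign is correctly pinned down — but this is a couple of lines of algebra using only Proposition \ref{prop:params} together with the assumption $\mu < k$.
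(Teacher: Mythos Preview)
Your proof is correct and follows essentially the same route as the paper: reduce to Corollary~\ref{cor:d0xeqtII} at $d=0$, rewrite the required inequality as $k(k-\rho)\ge\mu(k-\sigma-1)$, and verify this via the factorisation $k(k-\rho)-\mu(k-\sigma-1)=(\sigma+1)(\mu-k\rho)\ge 0$ using the type~II integrality of $\rho$ and $\sigma$. Your separate treatment of the $\mu=0$ case is in fact a small improvement --- the paper's argument tacitly divides by $k-\rho$, which vanishes precisely when $\mu=0$.
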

\begin{proof}
	If $\mu<k$, we must have $\rho>0$. Using Proposition \ref{prop:params}, we see that 
	\begin{eqnarray*}
	k(k-\rho)-\mu(k-\sigma-1)&=&k(k-\mu-\rho)+\mu(\sigma+1)\\
	&=&k(-\rho\sigma-\rho)+\mu(\sigma+1)\\
	&=&(\sigma+1)(\mu -\rho k)\geq 0. 
	\end{eqnarray*}
    From here we use Lemma \ref{prop:vmu} to see that
    \begin{eqnarray*}
    \frac{(k-\sigma)(k-\sigma-1)}{v}&=&\frac{\mu(k-\sigma-1)}{(k-\rho)}\\
    &\leq& k.
    \end{eqnarray*}
    We now see that Corollary \ref{cor:d0xeqtII} applies to $d=0$.
\end{proof}

Now we know we can have $\left[ x_{\lfloor h_d \rfloor}\right]=\left[ x_{h_d}\right]$, so we are interested in the value of $R_{\Gamma}$ at $x$ coordinate $\left[x_{h_d}\right]=d-\sigma-1$. We already know a root of this polynomial in its second coordinate, so we have the following result.

\begin{lemma}\label{lem:tIIRneg}
	Let $\Gamma$ be in $\text{SRG}(v,k,\lambda,\mu)$ with restricted eigenvalues $\rho>\sigma$, and let $d$ be an integer, $0\leq d \leq k$. 
	
	Then $R_{\Gamma}(d-\sigma-1,y,d)<0$ if and only if 
	\begin{equation*}
		\frac{(k-\sigma)(d-\sigma-1)}{\mu}<y<h_d.
	\end{equation*}
\end{lemma}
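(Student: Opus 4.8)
The plan is to exploit the fact that $R_{\Gamma}(x,y,d)$, for fixed $x$ and $d$, is a quadratic in $y$ with known leading behaviour, and to identify $h_d$ and $(k-\sigma)(d-\sigma-1)/\mu$ as its two roots when we specialise $x = d-\sigma-1 = \left[x_{h_d}\right]$. First I would write out $R_{\Gamma}(d-\sigma-1,y,d)$ explicitly: substituting $x = d-\sigma-1$ into the definition of the rap, the terms involving $y$ organise into a quadratic $Q(y) = -\mu y^{2} + (\cdots) y + (\cdots)$ in $y$ (note the coefficient of $y^2$ is $-\mu$ since the $(v-y)$ term contributes $x(x+1)(v-y)$ and the $y(y-1)\mu - yd^2$ contributes $\mu y^2$, while $-2xyk$ and $(2x+\lambda-\mu+1)yd$ are linear; so the $y^2$-coefficient is $-x(x+1) + \mu$, and one must check this simplifies appropriately — actually with $x = d-\sigma-1$ this need not be $-\mu$, so the leading coefficient must be computed and its sign determined). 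The key point is that a downward-opening (or upward-opening) parabola in $y$ is negative exactly outside (resp. between) its roots, so once the roots are pinned down the iff-statement follows.

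The main work is showing the two roots are exactly $h_d = \text{Haem}_{\geq}(\Gamma,d) = v(d-\sigma)/(k-\sigma)$ and $(k-\sigma)(d-\sigma-1)/\mu$. For the first root, I would invoke the machinery already built: Equation~\eqref{eq:rapfact} from the proof of Lemma~\ref{lem:upbound} factors $-\tfrac{(v-y)}{y}\mu R_{\Gamma}(x_y + 1/2, y, d)$ as $(\mu y - (d-\rho)(k-\sigma))(\mu y - (d-\sigma)(k-\rho))$, and since $x_{h_d} + 1/2 = d-\sigma$ (so $x_{h_d} = d-\sigma-1/2$, consistent with $\left[x_{h_d}\right] = d-\sigma-1$), evaluating the rap at the critical $x$-point at $y = h_d$ gives zero — but we want the rap at the integer point $d-\sigma-1$, not the real critical point. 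A cleaner route: directly verify that $y = h_d$ and $y = (k-\sigma)(d-\sigma-1)/\mu$ both make $R_{\Gamma}(d-\sigma-1,y,d) = 0$ by plugging in and using Lemma~\ref{prop:vmu} ($v\mu = (k-\sigma)(k-\rho)$) and the relations $\lambda - \mu = \rho+\sigma$, $\mu - k = \rho\sigma$ from Proposition~\ref{prop:params}; this is the routine calculation that Appendix~\ref{app:maple} is presumably set up to confirm. Once both values are confirmed as roots and the leading coefficient's sign is settled, the quadratic has exactly these two roots (they are distinct generically; the degenerate case where they coincide would need a brief remark, but then the strict inequality region is empty, matching the empty open interval).

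The step I expect to be the main obstacle is determining the sign of the leading coefficient of $R_{\Gamma}(d-\sigma-1,y,d)$ as a polynomial in $y$, since this controls whether ``negative'' means ``outside the roots'' or ``between the roots,'' and hence whether the claimed open interval $\left(\tfrac{(k-\sigma)(d-\sigma-1)}{\mu},\, h_d\right)$ is the region where $R_{\Gamma} < 0$. One must check that $\mu - x(x+1)$ with $x = d-\sigma-1$ has a definite sign (or rather, that after simplification the $y^2$-coefficient is negative, so the parabola opens downward and is negative outside $[\alpha,\beta]$ — wait, that would give the complement of the interval, so in fact the coefficient should turn out positive, making $R_{\Gamma}<0$ strictly between the roots, matching the statement); reconciling this with the ordering of the two roots (one needs $(k-\sigma)(d-\sigma-1)/\mu \le h_d$, equivalently $\mu(d-\sigma) \ge \mu(d-\sigma-1) \cdot \tfrac{(k-\sigma)}{\,}$... i.e. $v(d-\sigma)/(k-\sigma) \ge (k-\sigma)(d-\sigma-1)/\mu \iff v\mu(d-\sigma) \ge (k-\sigma)^2(d-\sigma-1) \iff (k-\rho)(d-\sigma) \ge (k-\sigma)(d-\sigma-1)$ using Lemma~\ref{prop:vmu}, which holds since $k-\rho \ge k-\sigma > 0$... no, $\rho > \sigma$ so $k-\rho < k-\sigma$ — so this needs the $d-\sigma$ vs $d-\sigma-1$ slack and a genuine short argument) is where care is required. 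Everything else is bookkeeping with the three SRG identities, and I would relegate the explicit polynomial identities to the Maple verification already referenced in Appendix~\ref{app:maple}.
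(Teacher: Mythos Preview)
Your overall approach matches the paper's: view $R_{\Gamma}(d-\sigma-1,y,d)$ as a quadratic in $y$, identify its two roots as $h_d$ and $(k-\sigma)(d-\sigma-1)/\mu$, and conclude from the sign of the leading coefficient. However, you have introduced unnecessary confusion about that leading coefficient. The term $x(x+1)(v-y)$ is \emph{linear} in $y$, not quadratic, so the only contribution to the $y^2$-coefficient comes from $y(y-1)\mu$; hence the leading coefficient is exactly $\mu$, independent of $x$. This immediately resolves your ``main obstacle'': the parabola opens upward (for $\mu>0$) and is negative precisely between its roots.

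The paper also finds the second root more cleanly than by direct substitution. From Lemma~\ref{lem:upbound} one already knows that $x_{h_d}\pm 1/2 = d-\sigma$ and $d-\sigma-1$ are the roots of $R_{\Gamma}(x,h_d,d)$ in $x$; in particular $R_{\Gamma}(d-\sigma-1,h_d,d)=0$, so $h_d$ is one root in $y$. The constant term of the quadratic in $y$ is $R_{\Gamma}(d-\sigma-1,0,d)=(d-\sigma-1)(d-\sigma)v$, so by Vieta the product of the roots is $(d-\sigma-1)(d-\sigma)v/\mu$, and dividing by $h_d=v(d-\sigma)/(k-\sigma)$ gives the second root $(k-\sigma)(d-\sigma-1)/\mu$ directly. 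This avoids the second plug-and-verify calculation you propose and the appeal to Maple. Your concern about the ordering of the two roots is legitimate but is not addressed in the paper's proof either; the difference $h_d-\alpha=\bigl((k-\sigma)-(d-\sigma)(\rho-\sigma)\bigr)/\mu$ is exactly the quantity that reappears as the upper bound in Corollary~\ref{cor:tIIstr}.
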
 
\begin{proof}
	In Lemma \ref{lem:upbound}, we show that $R_\Gamma(x,h_d,d)$ is a quadratic in $x$ with positive leading coefficient, critical point $x_y$ and roots $x_y+1/2=d-\sigma$ and $x_y-1/2=d-\sigma-1$. Now regarding $R_{\Gamma}(d-\sigma-1,y,d)$ as a quadratic in $y$, we see that $R_{\Gamma}$ has leading coefficient $\mu$ and one root at $h_d$. The constant term of this polynomial is $(d-\sigma)(d-\sigma-1)v$, so the other root of the polynomial is
	\begin{eqnarray*}
	\alpha &=& v(d-\sigma)(d-\sigma-1)/\mu h_d \\
	&=& (k-\sigma)(d-\sigma-1)/\mu
	\end{eqnarray*}  
\end{proof}

Now we can derive a simple condition for the raub to be strictly better than Haemers' upper bound, $h_d$, for type II strongly regular graphs.

\begin{cor}\label{cor:tIIstr}
	Let $\Gamma$ be in $\text{SRG}(v,k,\lambda,\mu)$ and of type II with restricted eigenvalues $\rho>\sigma$, and let $d$ be an integer, $0\leq d \leq k$. 
	
	Suppose
	\begin{equation*}
		0<\text{frac}(h_d)<\frac{(k-\sigma)-(d-\sigma)(\rho-\sigma)}{\mu}.
	\end{equation*}
	Then $\text{Rab}_{\geq}(\Gamma,d)<\lfloor h_d \rfloor$.
\end{cor}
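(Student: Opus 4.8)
The plan is to reduce the statement to a single application of Lemma \ref{lem:tIIRneg} at the integer point $\left[x_{h_d}\right]=d-\sigma-1$ and the height $y=\lfloor h_d\rfloor$. By the observation in Equation \eqref{eq:R_gefficient} (and the remark following it that, in the range $\text{Haem}_\leq(\Gamma,d)<y<h_d$, the unique candidate integer is $\left[x_y\right]$), together with Theorem \ref{thm:rabs} and the definition of $\text{Rab}_\geq(\Gamma,d)$, it suffices to exhibit one integer $b$ with $R_\Gamma(b,\lfloor h_d\rfloor,d)<0$; this will force $\lfloor h_d\rfloor\notin S_d$, hence $\text{Rab}_\geq(\Gamma,d)\le\lfloor h_d\rfloor-1<\lfloor h_d\rfloor$. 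I will take $b=d-\sigma-1$. Note $\lfloor h_d\rfloor>0$ since $\text{frac}(h_d)>0$ already forces $h_d>0$.

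First I would record that, for type II graphs, $\left[x_{\lfloor h_d\rfloor}\right]$ need not equal $\left[x_{h_d}\right]=d-\sigma-1$ in general, but here that subtlety is bypassed: Lemma \ref{lem:tIIRneg} gives an exact description of the $y$-interval on which $R_\Gamma(d-\sigma-1,y,d)<0$, namely
\[
\frac{(k-\sigma)(d-\sigma-1)}{\mu}<y<h_d,
\]
so I only need $\lfloor h_d\rfloor$ to lie strictly inside this interval. The right endpoint is clear: $\lfloor h_d\rfloor<h_d$ because $\text{frac}(h_d)>0$. For the left endpoint I must show $\lfloor h_d\rfloor>(k-\sigma)(d-\sigma-1)/\mu$, equivalently $h_d-\text{frac}(h_d)>(k-\sigma)(d-\sigma-1)/\mu$, i.e.
\[
\text{frac}(h_d)<h_d-\frac{(k-\sigma)(d-\sigma-1)}{\mu}.
\]

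The remaining task is the routine identity $h_d-(k-\sigma)(d-\sigma-1)/\mu=\bigl((k-\sigma)-(d-\sigma)(\rho-\sigma)\bigr)/\mu$, which is exactly the hypothesised upper bound on $\text{frac}(h_d)$. To verify it I would use $h_d=\text{Haem}_\geq(\Gamma,d)=v(d-\sigma)/(k-\sigma)$ and Lemma \ref{prop:vmu} in the form $v\mu=(k-\rho)(k-\sigma)$, so that $\mu h_d=(k-\rho)(d-\sigma)$; then
\[
\mu h_d-(k-\sigma)(d-\sigma-1)=(k-\rho)(d-\sigma)-(k-\sigma)(d-\sigma-1)=(k-\sigma)-(d-\sigma)(\rho-\sigma),
\]
after expanding and cancelling the $(d-\sigma)$ terms. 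Dividing by $\mu$ gives the claimed equality, and combining the two endpoint inequalities places $\lfloor h_d\rfloor$ strictly inside the interval of Lemma \ref{lem:tIIRneg}, yielding $R_\Gamma(d-\sigma-1,\lfloor h_d\rfloor,d)<0$ and hence $\text{Rab}_\geq(\Gamma,d)<\lfloor h_d\rfloor$. There is no real obstacle here — the only point requiring a little care is making sure the degenerate cases ($h_d$ an integer, or $\lfloor h_d\rfloor=0$) are excluded by the strict positivity of $\text{frac}(h_d)$, and that we are genuinely allowed to invoke Equation \eqref{eq:R_gefficient} at $y=\lfloor h_d\rfloor$, which holds since $0<\lfloor h_d\rfloor<h_d<v$.
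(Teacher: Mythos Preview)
Your proof is correct and follows essentially the same approach as the paper: apply Lemma~\ref{lem:tIIRneg} at $y=\lfloor h_d\rfloor$, check that the hypothesis $0<\text{frac}(h_d)<\bigl((k-\sigma)-(d-\sigma)(\rho-\sigma)\bigr)/\mu$ is exactly the condition placing $\lfloor h_d\rfloor$ strictly between the two roots $(k-\sigma)(d-\sigma-1)/\mu$ and $h_d$, and conclude via Theorem~\ref{thm:rabcomp}. You supply more detail than the paper does (the explicit algebraic verification of the identity using $v\mu=(k-\rho)(k-\sigma)$, and the explicit logical step from $\lfloor h_d\rfloor\notin S_d$ to $\text{Rab}_\geq(\Gamma,d)<\lfloor h_d\rfloor$), but the argument is the same.
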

\begin{proof}
	By Lemma \ref{lem:tIIRneg}, $R_{\Gamma}(d-\sigma-1,\lfloor h_d \rfloor, d)<0$ if and only if
	\begin{eqnarray*}
	0<\text{frac}(h_d)&<&h_d-(k-\sigma)(d-\sigma-1)/\mu\\
	&=& \frac{(k-\sigma)-(d-\sigma)(\rho-\sigma)}{\mu}
	\end{eqnarray*} 
\end{proof}

\subsection*{Improvements for infinitely many type II graphs}\label{sec:CY1strict}

We now introduce a family of type II strongly regular graphs, and prove that for any $d$, $\text{Rab}_{\geq}(\Gamma,d)<\lfloor \text{Haem}_\geq(\Gamma,d)\rfloor$ for infinitely many graphs $\Gamma$ from this family.

The graphs we consider are presented in Calderbank and Kantor \cite[Example CY1]{CK_1986}. For each $q$ a prime power and $l,i$ positive integers where $i\leq q$, they find a strongly regular graph using codes over the finite fields. 
We will be interested in the case where $l=2,i=q-1$. Then by \cite[Table 2a]{CK_1986}, there is a strongly regular graph $\Gamma_q$ with parameters $(v,k,\lambda,\mu)$ and restricted eigenvalues $\rho>\sigma$ as follows.
\begin{eqnarray}\label{eq:gqparms}
v&=&q^4,\nonumber\\
k&=&(q-1)^2(q^2+1),\nonumber\\
\lambda&=&\mu+\rho+\sigma,\\
\mu&=&(q-1)^2((q-1)^2+1),\nonumber\\
\rho&=&(q-1)^2,\nonumber\\
\sigma&=&1-2q.\nonumber
\end{eqnarray}    

\begin{thm}
	Let $\Lambda>0$. Then there are infinitely many prime powers $q$ such that for each integer $d$ in the range $0\leq d\leq \Lambda$, we have
	\begin{equation*}
	\text{Rab}_{\geq}(\Gamma_q,d)<\lfloor\text{Haem}_\geq(\Gamma_q,d)\rfloor.
	\end{equation*}
\end{thm}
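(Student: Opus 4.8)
The plan is to apply Corollary~\ref{cor:tIIstr} to the graphs $\Gamma_q$, after substituting the parameters in \eqref{eq:gqparms} to obtain manageable closed forms. First I would record the elementary simplifications
\[
k-\sigma = q^{2}(q^{2}-2q+2),\qquad \rho-\sigma = q^{2},\qquad d-\sigma = d+2q-1,\qquad \mu = (q-1)^{2}(q^{2}-2q+2),
\]
each obtained by direct expansion from \eqref{eq:gqparms} (note $\Gamma_q$ has integer eigenvalues, so it is indeed of type II, and $k\to\infty$, so $0\le d\le k$ for $q$ large). Then $\text{Haem}_{\geq}(\Gamma_q,d) = v(d-\sigma)/(k-\sigma) = q^{2}(d+2q-1)/(q^{2}-2q+2)$, and polynomial division gives
\[
h_d := \text{Haem}_{\geq}(\Gamma_q,d) = 2q + d + 3 + \frac{(2d+2)q - 2d - 6}{q^{2}-2q+2}.
\]

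Next I would analyse, for fixed $d$ and $q\to\infty$, the two quantities occurring in Corollary~\ref{cor:tIIstr}. Since $2q+d+3\in\mathbb{Z}$ and, for $q$ large relative to $d$, the fraction $\frac{(2d+2)q-2d-6}{q^{2}-2q+2}$ lies strictly between $0$ and $1$ (positivity of the numerator needs only $q>\tfrac{d+3}{d+1}$, and the upper estimate follows from $q^{2}-(2d+4)q+(2d+8)>0$ for $q$ large), we get
\[
\text{frac}(h_d) = \frac{(2d+2)q - 2d - 6}{q^{2}-2q+2},
\]
which is positive and tends to $0$ as $q\to\infty$. On the other side, a short computation using the simplifications above gives
\[
\frac{(k-\sigma)-(d-\sigma)(\rho-\sigma)}{\mu} = \frac{q^{2}(q^{2}-4q+3-d)}{(q-1)^{2}(q^{2}-2q+2)};
\]
expanding, the denominator minus the numerator equals $(4+d)q^{2}-6q+2>0$, so this ratio is always strictly less than $1$, it is positive as soon as $q^{2}-4q+3>d$, and it tends to $1$ as $q\to\infty$.

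Finally I would combine these facts: for each fixed nonnegative integer $d$ there is a threshold $Q_d$ such that for every prime power $q\ge Q_d$ we have $d\le k$ and $0<\text{frac}(h_d)<\frac{(k-\sigma)-(d-\sigma)(\rho-\sigma)}{\mu}$, whence Corollary~\ref{cor:tIIstr} gives $\text{Rab}_{\geq}(\Gamma_q,d)<\lfloor h_d\rfloor$. Setting $Q:=\max\{Q_d : 0\le d\le \Lambda\}$, a maximum over the finitely many admissible values of $d$, we obtain the conclusion simultaneously for all integers $d$ with $0\le d\le\Lambda$ and all prime powers $q\ge Q$; as there are infinitely many such prime powers, the theorem follows. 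The only real work is bookkeeping: making the threshold uniform over the finitely many values of $d$, confirming that $\text{frac}(h_d)$ is genuinely nonzero and genuinely equals the stated fraction (rather than being absorbed into the integer part), and checking the two displayed polynomial identities. Unlike the type~I case, no equidistribution or analytic number theory input is needed here, so there is no substantive obstacle beyond these routine verifications.
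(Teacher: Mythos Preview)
Your proposal is correct and follows essentially the same route as the paper's own proof: both apply Corollary~\ref{cor:tIIstr} by computing that $\text{frac}(h_d)$ is positive and tends to $0$ while the bound $\bigl((k-\sigma)-(d-\sigma)(\rho-\sigma)\bigr)/\mu$ tends to $1$, then take a uniform threshold over the finitely many values $0\le d\le\Lambda$. The paper's proof is only a two-sentence sketch of this argument; your version supplies the explicit polynomial division and the closed forms for the two quantities, so there is nothing materially different in strategy.
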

\begin{proof}
	Consider graphs $\Gamma_q$ with parameters from Equation \eqref{eq:gqparms}. For large enough $q$ and fixed $d<\Lambda$, we can calculate the fractional part of 
	$h_d =\text{Haem}_\geq(\Gamma,d)$, and show that it tends to $0$, but is never equal to $0$. On the other hand, the upper bound in Corollary \ref{cor:tIIstr} can also be calculated, and shown to tend to $1$.
	 
	Therefore, for $q$ large enough we can apply Corollary \ref{cor:tIIstr} to get the result.
\end{proof}

As the graphs $\Gamma_q$ are type II strongly regular graphs, the above gives us the following Corollary.

\begin{cor}
	Let $d$ be a non-negative integer. Then there are infinitely many type II strongly regular graphs $\Gamma$ for which $\text{Rab}_{\geq}(\Gamma,d)<\lfloor\text{Haem}_\geq(\Gamma,d)\rfloor.$
\end{cor}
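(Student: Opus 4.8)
The plan is to apply Corollary \ref{cor:tIIstr} to the family $\Gamma_q$ from Equation \eqref{eq:gqparms}, reducing everything to two asymptotic estimates in $q$: that $\text{frac}(h_d)$ is positive but tends to $0$, and that the upper bound $((k-\sigma)-(d-\sigma)(\rho-\sigma))/\mu$ tends to $1$, uniformly for $0\le d\le\Lambda$. Since $\Gamma_q$ is of type II (all eigenvalues of $\Gamma_q$ are the integers $k$, $\rho=(q-1)^2$, $\sigma=1-2q$), the hypothesis of Corollary \ref{cor:tIIstr} will be in force, and once both estimates are established we conclude $\text{Rab}_{\geq}(\Gamma_q,d)<\lfloor h_d\rfloor$ for all large $q$ and all $d$ in the range; taking $q$ ranging over an infinite set of prime powers finishes the theorem.

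First I would compute $h_d=\text{Haem}_{\geq}(\Gamma_q,d)=q^4\,\frac{d-\sigma}{k-\sigma}$. Using $\sigma=1-2q$ and $k-\sigma=(q-1)^2(q^2+1)+2q-1=q^4-2q^3+2q^2$ (a routine simplification), one gets $k-\sigma=q^2(q^2-2q+2)$, so
\[
h_d=\frac{q^4(d+2q-1)}{q^2(q^2-2q+2)}=\frac{q^2(d+2q-1)}{q^2-2q+2}.
\]
Polynomial division in $q$ then writes $h_d$ as an integer-coefficient polynomial in $q$ plus a remainder term of the form $(\alpha q+\beta)/(q^2-2q+2)$ with $\alpha,\beta$ linear in $d$; the integer part is divisible out because $q^2(d+2q-1)$ and $q^2-2q+2$ have controlled residues. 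The upshot is $\text{frac}(h_d)=\{(\alpha_d q+\beta_d)/(q^2-2q+2)\}$ with numerator $O(q)$ and denominator $\sim q^2$, hence $\text{frac}(h_d)\to 0$; and one checks the remainder is a nonzero rational with denominator $>1$ for all large $q$ (e.g.\ because $q^2-2q+2\nmid \alpha_d q+\beta_d$ when $q$ is large relative to $d$), so $\text{frac}(h_d)\ne 0$. For the right-hand side of Corollary \ref{cor:tIIstr}, substitute $\rho-\sigma=(q-1)^2-(1-2q)=q^2$ and $\mu=(q-1)^2((q-1)^2+1)$; then $(k-\sigma)-(d-\sigma)(\rho-\sigma)=q^2(q^2-2q+2)-q^2(d+2q-1)=q^2(q^2-4q+3-d)$, so the bound equals $\frac{q^2(q^2-4q+3-d)}{(q-1)^2((q-1)^2+1)}$, whose numerator and denominator are both $\sim q^4$ with leading coefficient $1$, so it tends to $1$; uniformity over $0\le d\le\Lambda$ is immediate since the $d$-dependence sits only in lower-order terms.

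The main obstacle — and the only genuinely delicate point — is verifying that $\text{frac}(h_d)$ is \emph{strictly} positive for all large $q$, i.e.\ that $h_d$ is never an integer. This is a divisibility statement: $q^2-2q+2$ must not divide $q^2(d+2q-1)$. Since $\gcd(q,q^2-2q+2)=\gcd(q,2)$, one reduces (for odd $q$) to showing $q^2-2q+2\nmid d+2q-1$, which holds once $q^2-2q+2>|d+2q-1|$, i.e.\ for $q$ larger than an explicit bound depending on $\Lambda$; the even prime-power case $q=2^e$ is handled by a separate, equally elementary residue check. Once this non-integrality and the two limits are in hand, Corollary \ref{cor:tIIstr} applies for all $0\le d\le\Lambda$ simultaneously once $q$ exceeds a threshold $q_0(\Lambda)$, and there are infinitely many prime powers $q>q_0(\Lambda)$, giving the claim.
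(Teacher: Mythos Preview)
Your proposal is correct and follows exactly the route the paper takes: the corollary is deduced from the preceding theorem about the family $\Gamma_q$, which in turn is proved by applying Corollary~\ref{cor:tIIstr} and verifying the two asymptotic facts that $\text{frac}(h_d)\to 0^+$ and $((k-\sigma)-(d-\sigma)(\rho-\sigma))/\mu\to 1$. You actually supply more detail than the paper does (the paper merely asserts these limits without computation); your explicit evaluation $k-\sigma=q^2(q^2-2q+2)$, $\rho-\sigma=q^2$, and the polynomial-division remainder for $h_d$ are all correct, and the non-integrality of $h_d$ for large $q$ follows most cleanly from that remainder being eventually in $(0,1)$ rather than from your gcd argument, though both work.
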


\subsection{Computational comparison}\label{sec:rabcompcomp}

Now we investigate when the raub and ralb are strictly better than the bounds of Haemers. For this, we can use the \textsf{AGT} package for \textsf{GAP} \cite{AGT_2020}. We start by explaining how we compute the regular adjacency bounds.

\subsubsection*{A note on computing the regular adjacency bounds}\label{sec:comprabs}

To compute the regular adjacency upper bound, we will iterate through values of $y$, starting at $v$ and decreasing by one at the end of each step. For each value of $y$, we determine if $R(m,y,d)\geq 0$ for all integers $m$. If we find this is not true, we have found the regular adjacency upper bound. Similarly, we can calculate the regular adjacency lower bound by iterating through values of $y$, starting from $d+1$ and increasing by one at each step.

Therefore, we would like to be able to determine whether $R(m,y,d)\geq 0$ for all integers $m$ efficiently. We have two cases for $y$:
\begin{enumerate}
	\item If $v=y$, $R$ has degree at most 1 in $x$, and is non-negative at all integers if and only if $R$ is a non-negative constant function. 
	\item If $v>y$, then we only need to consider the $x$ coordinate $\left[x_y\right]$ ($x_y$ defined in Equation \eqref{eq:x_y}) by the statement \eqref{eq:R_gefficient}.
\end{enumerate}
The most computationally costly calculation in this method is the rounding of $x_y$ to $\left[x_y\right]$. As $x_y$ is a rational number, the rounding operation can be done exactly and efficiently.

\subsubsection*{Comparison using the \textsf{AGT} package}

Let $(v,k,\lambda,\mu)$ be feasible strongly regular graph parameters (see \cite{AGT_2020} for the definition of feasible parameters). For any graph $\Gamma\in \text{SRG}(v,k,\lambda,\mu)$ and any non-negative integer $d\leq k$, we can compare the bounds of Haemers and our new bounds by using the following functions found in the \textsf{AGT} package:
\begin{itemize}
	\item \verb|HaemersRegularUpperBound| to calculate $\text{Haem}_{\geq}(\Gamma,d)$.
	\item \verb|HaemersRegularLowerBound| to calculate $\text{Haem}_{\leq}(\Gamma,d)$.
	\item \verb|RegularAdjacencyUpperBound| to calculate $\text{Rab}_{\geq}(\Gamma,d)$.
	\item \verb|RegularAdjacencyLowerBound| to calculate  $\text{Rab}_{\leq}(\Gamma,d)$.
\end{itemize}  
\begin{rem}
	A graph $\Gamma \in \text{SRG}(v,k,\lambda,\mu)$ is called \emph{imprimitive} if it $\Gamma$ or $\overline{\Gamma}$ is not connected  (see \cite{BH_2011}). This is equivalent to $\mu\in \{0,k\}$, in which case $\Gamma$ is isomorphic to a union of complete graphs or the complement of a union of complete graphs. In this cases, it is easy to show that Haemers' bounds and the regular adjacency bounds coincide. For this reason, we only compare our bounds for primitive strongly regular graphs (i.e. non-imprimitive strongly regular graphs).
\end{rem}

Much of the functionality available in the \textsf{AGT} package can be used to experiment with strongly regular graphs and their parameters. For example, some of the information about primitive strongly regular graph parameter tuples collected in Brouwer's lists \cite{B_2018} is available in the variable   \verb|AGT_Brouwer_Parameters|. In particular, \verb|AGT_Brouwer_Parameters| contains every feasible parameter tuple for a primitive strongly regular graph with at most $1300$ vertices (see \cite{BH_2011} for a definition of feasible parameter tuples).

There are natural trivial bounds to consider when comparing our new bounds with Haemers' bounds. A trivial upper bound on the order of any induced subgraph of a graph of order $v$ is $v$, and a lower bound on the order of a $d$-regular induced subgraph is $d+1$. For a graph $\Gamma\in \text{SRG}(v,k,\lambda,\mu)$ and non-negative integer $d\leq k$, we define 
\begin{align*}
\text{Haem}^{*}_{\geq}(\Gamma,d)&=\text{min}(\lfloor\text{Haem}_{\geq}(\Gamma,d)\rfloor,v),\\
\text{Haem}^{*}_{\leq}(\Gamma,d)&=\text{max}(\lceil\text{Haem}_{\leq}(\Gamma,d)\rceil,d+1).
\end{align*}

We will consider each feasible primitive strongly regular parameter tuple $(v,k,\lambda,\mu)$ where $v\leq 1300$. For each integer $d$, where $0\leq d\leq k$, we will consider a graph $\Gamma\in \text{SRG}(v,k,\lambda,\mu)$, and compare 
$\text{Rab}_{\geq}(\Gamma,d)$ with  $\text{Haem}^{*}_{\geq}(\Gamma,d)$, and  $\text{Rab}_{\leq}(\Gamma,d)$ with  $\text{Haem}^{*}_{\leq}(\Gamma,d)$. There are currently 1616218 combinations of parameter tuples and $d$ satisfying all the above conditions.

\subsubsection*{The raub and Haemers' upper bound}

The results of our calculations show that $\text{Rab}_{\geq}(\Gamma,d)< \text{Haem}^{*}_{\geq}(\Gamma,d)$ in 18199 cases, and in 10931 of these cases the regular adjacency polynomial proves there is no possible order for a $d$-regular induced subgraph in $\Gamma$. Out of the remaining 7268 cases, there are 123 cases for which $\text{Rab}_{\geq}(\Gamma,d)< \text{Haem}^{*}_{\geq}(\Gamma,d)-1$. 

Table \ref{tab:raubh} has first column consisting of all values of $d$ for which we have found a parameter tuple $(v,k,\lambda,\mu)$ such that for every  $\Gamma\in\text{SRG}(v,k,\lambda,\mu)$,  $$\text{Rab}_{\geq}(\Gamma,d)< \text{Haem}^{*}_{\geq}(\Gamma,d)-1.$$
For each of these $d$, the second column gives how many parameter tuples this occurs for ($\Sigma_d$). The third column give the largest difference $\text{Haem}^{*}_{\geq}(\Gamma,d)-\text{Rab}_{\geq}(\Gamma,d)$ found for this value of $d$ ($\Omega_{d}$).

\subsubsection*{The ralb and Haemers' lower bound}

The results of our calculations show that $\text{Rab}_{\leq}(\Gamma,d)> \text{Haem}^{*}_{\leq}(\Gamma,d)$ in 15639 cases, and in 10931 of these cases the regular adjacency polynomial proves there is no possible order for a $d$-regular induced subgraph in $\Gamma$. Out of the 4708 other cases, there are 787 cases for which $\text{Rab}_{\leq}(\Gamma,d)> \text{Haem}^{*}_{\leq}(\Gamma,d)+1$. 

Table \ref{tab:ralbh} has first column consisting of all values of $d$ for which we have found a parameter tuple $(v,k,\lambda,\mu)$ such that for every  $\Gamma\in\text{SRG}(v,k,\lambda,\mu)$, $$\text{Rab}_{\leq}(\Gamma,d)> \text{Haem}^{*}_{\leq}(\Gamma,d)+1.$$
For each of these $d$, the second column gives how many parameter tuples this occurs for ($\Sigma_{d}$). The third column give the largest difference $\text{Rab}_{\leq}(\Gamma,d)-\text{Haem}^{*}_{\leq}(\Gamma,d)$ found for this value of $d$ ($\Omega_{d}$).  

\begin{table}[ht!]
	\centering
	\begin{tabular}{c|c|c}
		$d$ & $\Sigma_{d}$ & $\Omega_{d}$\tabularnewline
		\hline 
		0 & 44 & 5\tabularnewline
		\hline 
		1 & 27 & 4\tabularnewline
		\hline 
		2 & 20 & 4\tabularnewline
		\hline 
		3 & 10 & 3\tabularnewline
		\hline 
		4 & 6 & 3\tabularnewline
		\hline 
		5 & 5 & 3\tabularnewline
		\hline 
		6 & 3 & 2\tabularnewline
		\hline 
		7 & 2 & 2\tabularnewline
		\hline 
		8 & 2 & 2\tabularnewline
		\hline 
		9 & 2 & 2\tabularnewline
		\hline 
		10 & 1 & 2\tabularnewline
		\hline 
		11 & 1 & 2\tabularnewline
	\end{tabular}
	\caption{Cases for which $\text{Rab}_{\geq}(\Gamma,d)< \text{Haem}^{*}_{\geq}(\Gamma,d)-1$.}\label{tab:raubh}
\end{table}

\begin{table}[ht!]
	\begin{minipage}[t]{0.45\columnwidth}%
		\begin{center}
			\begin{tabular}{c|c|c}
				$d$ & $\Sigma_{d}$ & $\Omega_{d}$\tabularnewline
				\hline 
				2 & 24 & 120\tabularnewline
				\hline 
				3 & 41 & 94\tabularnewline
				\hline 
				4 & 62 & 69\tabularnewline
				\hline 
				5 & 64 & 45\tabularnewline
				\hline 
				6 & 85 & 66\tabularnewline
				\hline 
				7 & 79 & 43\tabularnewline
				\hline 
				8 & 69 & 33\tabularnewline
				\hline 
				9 & 71 & 16\tabularnewline
				\hline 
				10 & 62 & 16\tabularnewline
				\hline 
				11 & 61 & 18\tabularnewline
				\hline 
				12 & 34 & 17\tabularnewline
			\end{tabular}
			\par\end{center}%
	\end{minipage}\hfill{}%
	\begin{minipage}[t]{0.45\columnwidth}%
		\begin{center}
			\begin{tabular}{c|c|c}
				$d$ & $\Sigma_{d}$ & $\Omega_{d}$\tabularnewline
				\hline 
				13 & 37 & 27\tabularnewline
				\hline 
				14 & 27 & 6\tabularnewline
				\hline 
				15 & 18 & 8\tabularnewline
				\hline 
				16 & 16 & 7\tabularnewline
				\hline 
				17 & 15 & 8\tabularnewline
				\hline 
				18 & 8 & 5\tabularnewline
				\hline 
				19 & 8 & 4\tabularnewline
				\hline 
				20 & 3 & 3\tabularnewline
				\hline 
				21 & 2 & 2\tabularnewline
				\hline 
				22 & 1 & 4\tabularnewline
			\end{tabular}
			\par\end{center}%
	\end{minipage}
	\caption{Cases for which $\text{Rab}_{\leq}(\Gamma,d)> \text{Haem}^{*}_{\leq}(\Gamma,d)+1$.}\label{tab:ralbh}
\end{table}

\begin{rem}
In many cases we can further improve a bound on the order of a $d$-regular induced subgraph by considering a well-known divisibility condition, that for any $d$-regular graph of order $y$, we must have that $2$ divides $yd$.

For example, consider a graph $\Gamma$ from $\text{SRG}(41,20,9,10)$ and $d=1$. We can use the \textsf{AGT} package to find that $\text{Rab}_{\geq}(\Gamma,1)=7$. But a $1$-regular induced subgraph must have even order. Therefore, any $1$-regular induced subgraph of $\Gamma$ has order at most $6$. 
\end{rem}
\section{The Clique Adjacency Bound}\label{sec:cab}

An early application of the block intersection polynomial can be found in Soicher \cite{S_2010}. In this paper, Soicher derives a bound for the order of cliques in edge-regular graphs. Here, we present the main tools found in Soicher \cite{S_2010} and Greaves and Soicher \cite{GS_2016}, and then investigate these tools computationally.

A graph $\Gamma$ is edge-regular with parameters $(v,k,\lambda)$ if it is a $v$-vertex $k$-regular graph containing at least one edge, and such that each pair of adjacent vertices have exactly $\lambda$ common neighbours. For such a graph $\Gamma$, Soicher defines the \emph{clique adjacency polynomial}, $C_{\Gamma}(x,y)$ as
\begin{equation*}
C_{\Gamma}(x,y):=(v-y)x(x+1)-2xy(k-y+1)+y(y-1)(\lambda-y+2).
\end{equation*}

The \emph{clique adjacency bound} for $\Gamma$, denoted by $\text{CAB}(\Gamma)$, is defined to be the least integer $y\geq 2$ such that there exists an integer $m$ where $C_{\Gamma}(m,y+1)<0$. Soicher \cite{S_2010} shows that this is a bound on the order of a clique in $\Gamma$.

For strongly regular graphs (which are also edge-regular), the clique adjacency polynomial is very closely related to the regular adjacency polynomial. Let $\Gamma\in \text{SRG}(v,k,\lambda,\mu)$ with restricted eigenvalues $\rho>\sigma$. Then $\overline{\Gamma}$ is also a strongly regular graph, with the parameters $(\overline{v},\overline{k},\overline{\lambda},\overline{\mu})$ and restricted eigenvalues $\overline{\rho}>\overline{\sigma}$, where
\begin{equation*}
\begin{aligned}
\overline{v}&=v\\
\overline{k}&=v-k-1\\
\overline{\lambda}&=v-2-2k+\mu\\
\overline{\mu}&=v-2k+\lambda\\
\overline{\rho}&=-1-\sigma\\
\overline{\sigma}&=-1-\rho
\end{aligned}
\end{equation*}
(see Brouwer, Cohen and Neumaier \cite{BCN_1989}). The next result shows a relation between the regular adjacency polynomial of $\overline{\Gamma}$ and the clique adjacency polynomial of $\Gamma$.
\begin{prop}
	Let $\Gamma$ be in $\text{SRG}(v,k,\lambda,\mu)$. Then
	\begin{equation*}
	R_{\overline{\Gamma}}(x,y,0)=C_{\Gamma}(y-x-1,y).
	\end{equation*}
	In particular, we have $\text{Rab}_\geq(\overline{\Gamma},0)=\text{CAB}(\Gamma)$. 
\end{prop}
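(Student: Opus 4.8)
The plan is to verify the polynomial identity $R_{\overline{\Gamma}}(x,y,0)=C_{\Gamma}(y-x-1,y)$ by direct substitution, and then translate it into the claimed equality of bounds by matching up the defining conditions of $\text{Rab}_\geq(\overline{\Gamma},0)$ and $\text{CAB}(\Gamma)$. First I would write down $R_{\overline{\Gamma}}(x,y,0)$ explicitly: since $d=0$, the rap simplifies to
\begin{equation*}
R_{\overline{\Gamma}}(x,y,0)=x(x+1)(\overline{v}-y)-2xy\overline{k}+y(y-1)\overline{\mu},
\end{equation*}
and then substitute $\overline{v}=v$, $\overline{k}=v-k-1$, $\overline{\mu}=v-2k+\lambda$. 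On the other side, I would expand $C_{\Gamma}(y-x-1,y)$ using the definition of the clique adjacency polynomial, substituting $x\mapsto y-x-1$ into $(v-y)x(x+1)-2xy(k-y+1)+y(y-1)(\lambda-y+2)$. Both sides are polynomials in $x$ of degree $2$ and in $y$ of degree $2$, so it suffices to check that the coefficients of $x^2$, $x$, $1$ (as polynomials in $y$, or just all monomials $x^iy^j$) agree; this is a finite routine computation. I expect the $x^2$-coefficient to match immediately (both give $(v-y)$ up to sign bookkeeping from the reflection $x\mapsto y-x-1$), and the cross terms to require a little care — this coefficient-matching is the one place a sign or an off-by-one could slip in, so it is the main thing to get right, but it is not conceptually hard.

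Granting the identity, the second statement follows from unpacking both definitions. By definition, $\text{CAB}(\Gamma)$ is the least integer $y\geq 2$ such that $C_{\Gamma}(m,y+1)<0$ for some integer $m$; equivalently, using the identity with $y$ replaced by $y+1$, it is the least $y\geq 2$ such that $R_{\overline{\Gamma}}(y-m,y+1,0)<0$ for some integer $m$, and since $m\mapsto y-m$ is a bijection of the integers, this is the least $y\geq 2$ such that $R_{\overline{\Gamma}}(x,y+1,0)<0$ for some integer $x$. On the other hand, $\text{Rab}_\geq(\overline{\Gamma},0)=\max(S_0)$ where $S_0=\{y\in\{1,\dots,v\}: R_{\overline{\Gamma}}(x,y,0)\geq 0\text{ for all integers }x\}$. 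So I need to show: the largest $y$ with $R_{\overline{\Gamma}}(\cdot,y,0)\geq 0$ everywhere equals $(\text{least }y\geq 2\text{ with }R_{\overline{\Gamma}}(\cdot,y+1,0)<0\text{ somewhere})$. This is a statement that $S_0$ is (essentially) a downward-closed interval: if $z\in S_0$ then every $y$ with $1\le y\le z$ lies in $S_0$, and moreover $1,2\in S_0$ always. Once that is known, $\max(S_0)$ is exactly one less than the least $y\notin S_0$, and translating "$y\notin S_0$" as "$R_{\overline{\Gamma}}(x,y,0)<0$ for some integer $x$" and reindexing gives $\max(S_0)=\text{CAB}(\Gamma)$ — noting that $\text{CAB}$ counts the last clique size, matching the $+1$ shift.

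The genuine obstacle, then, is establishing that $S_0$ is downward closed (monotonicity in $y$). I would argue this using the block-intersection-polynomial interpretation: $R_{\overline{\Gamma}}(\cdot,y,0)\ge 0$ everywhere is exactly the non-negativity condition of Soicher's block intersection polynomial associated to a hypothetical $0$-regular (i.e.\ independent) set of size $y$, and if size $y$ is "combinatorially feasible" in this polynomial sense then so is every smaller size, since an independent set of size $y$ contains independent sets of every smaller size — or, at the polynomial level, the relevant inequalities for $y-1$ are implied by those for $y$. Concretely I would either cite the monotonicity already implicit in the $d=0$ case of Theorem \ref{thm:rabs} together with the remark following it (for $d=0$ every subset of an independent set is again a $0$-regular induced subgraph, so the pathology noted in that remark does not occur), or verify directly from Equation \eqref{eq:R_gefficient} and the formula for $x_y$ that $R_{\overline{\Gamma}}([x_y],y,0)$ changes sign at most once as $y$ increases. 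Finally I would check the boundary cases $y=1,2$ by hand (as in the $y=1$ case of the proof of Theorem \ref{thm:rabs}) so that $\max(S_0)\ge 2$ and the indexing lines up with $\text{CAB}(\Gamma)\ge 2$, completing the argument.
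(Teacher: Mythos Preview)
Your treatment of the polynomial identity is essentially the paper's: write out $R_{\overline{\Gamma}}(x,y,0)$ with the complement parameters, expand $C_{\Gamma}(y-x-1,y)$, and compare. That part is fine and matches.

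Where you diverge is the second claim. The paper simply writes that $\text{Rab}_\geq(\overline{\Gamma},0)=\text{CAB}(\Gamma)$ ``follows from the definitions of the bounds,'' whereas you correctly notice that the two definitions are structured differently: $\text{Rab}_\geq$ is a $\max$ over the set $S_0$, while $\text{CAB}$ is one less than the \emph{least} $y\notin S_0$ with $y\ge 3$. These agree only if $S_0$ is an initial segment $\{1,\dots,M\}$, so your instinct to justify downward-closedness is exactly right, and arguably more careful than the paper.

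The problem is your main proposed justification. You argue monotonicity via ``an independent set of size $y$ contains independent sets of every smaller size.'' That combinatorial fact is true, but it proves the wrong implication: Theorem~\ref{thm:rapcon} says existence of an independent set of size $y$ implies $y\in S_0$, not the converse. Membership in $S_0$ is a purely polynomial condition and there need be no actual independent set witnessing it, so you cannot pass through the combinatorics to get $y-1\in S_0$ from $y\in S_0$. The appeal to Theorem~\ref{thm:rabs} and the remark about subsets of $d$-regular sets has the same circularity.

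Your fallback plan --- show directly that $y\mapsto R_{\overline{\Gamma}}([x_y],y,0)$ changes sign at most once, via \eqref{eq:R_gefficient} and the explicit formula for $x_y$ --- is the right shape of argument, but it is genuine work, not a one-liner: $[x_y]$ is a step function of $y$, so you would need to control the polynomial across the jump points as well as within each constant-$[x_y]$ range. If you pursue this route, that is where the effort goes.
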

\begin{proof}
	The equality of the polynomials can be directly verified using the identities in Proposition \ref{prop:params}.  The equality  $\text{Rab}_\geq(\overline{\Gamma},0)=\text{CAB}(\Gamma)$ then follows from the definitions of the bounds.
\end{proof}
Thus for strongly regular graphs, Proposition \ref{prop:strictgen}
is a generalisation of Greaves and Soicher \cite[Theorem 1]{GS_2016}. Note that the regular adjacency polynomial is a quadratic polynomial in both $x$ and $y$, whereas the clique adjacency polynomial is
a cubic polynomial in $y$. This suggests the regular adjacency polynomial
may be easier to analyse and use in computations, when we are studying
strongly regular graphs. In particular we can show that the sufficient condition is also necessary in \cite[Theorem 2.4]{GS_2016}.
\begin{cor}
	Let $\Gamma$ be in $\text{SRG}(v,k,\lambda,\mu)$ and of type II, with restricted eigenvalues $\rho>\sigma$. Then
	$\text{CAB}(\Gamma)<\lfloor 1-k/\sigma\rfloor$ if and only if 
	\begin{equation*}
	0<frac(-k/\sigma)<1-\rho(\rho+1)/(v-2k+\lambda)
	\end{equation*}
\end{cor}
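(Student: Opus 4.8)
The plan is to specialise Corollary~\ref{cor:tIIstr} to the case $d=0$ and translate everything through the dictionary relating $\Gamma$ to its complement $\overline{\Gamma}$. By the preceding Proposition we have $\text{CAB}(\Gamma)=\text{Rab}_\geq(\overline{\Gamma},0)$, so the statement $\text{CAB}(\Gamma)<\lfloor 1-k/\sigma\rfloor$ should match $\text{Rab}_\geq(\overline{\Gamma},0)<\lfloor \overline{h}_0\rfloor$ once we check that $\overline{h}_0:=\text{Haem}_\geq(\overline{\Gamma},0)=1-k/\sigma$. This follows from Lemma~\ref{prop:vmu} applied to $\overline{\Gamma}$: since $\text{Haem}_\geq(\overline{\Gamma},0)=\overline{v}(-\overline{\sigma})/(\overline{k}-\overline{\sigma})$ and $\overline{\sigma}=-1-\rho$, $\overline{k}=v-k-1$, a short calculation using $v\mu=(k-\sigma)(k-\rho)$ rewrites this as $1-k/\sigma$. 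Also $\overline{\Gamma}$ is of type II whenever $\Gamma$ is (integrality of eigenvalues is preserved under complementation), so Corollary~\ref{cor:tIIstr} is applicable with $d=0$ to $\overline{\Gamma}$.

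Next I would substitute $d=0$ into the inequality of Corollary~\ref{cor:tIIstr}, written for $\overline{\Gamma}$: it reads
\[
0<\text{frac}(\overline{h}_0)<\frac{(\overline{k}-\overline{\sigma})-\overline{\sigma}(\overline{\rho}-\overline{\sigma})}{\overline{\mu}}.
\]
The left-hand condition becomes $0<\text{frac}(-k/\sigma)$ directly from $\overline{h}_0=1-k/\sigma$ (the $1$ and the integer part drop out of the fractional part). For the right-hand side I would plug in $\overline{k}=v-k-1$, $\overline{\sigma}=-1-\rho$, $\overline{\rho}=-1-\sigma$, $\overline{\mu}=v-2k+\lambda$, and simplify. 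The numerator $(\overline{k}-\overline{\sigma})-\overline{\sigma}(\overline{\rho}-\overline{\sigma})$ becomes $(v-k-1+1+\rho)-(-1-\rho)((-1-\sigma)-(-1-\rho)) = (v-k+\rho) + (1+\rho)(\sigma-\rho)$; expanding and regrouping, together with the identity $v\mu=(k-\sigma)(k-\rho)$ from Lemma~\ref{prop:vmu} and $\mu-k=\rho\sigma$, $\lambda-\mu=\rho+\sigma$ from Proposition~\ref{prop:params}, this should collapse to $\overline{\mu}-\rho(\rho+1) = (v-2k+\lambda)-\rho(\rho+1)$. Dividing by $\overline{\mu}=v-2k+\lambda$ then yields $1-\rho(\rho+1)/(v-2k+\lambda)$, which is exactly the claimed bound.

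Finally I would remark that the quantifier "for every $\Gamma$" is harmless: all four bounds and the combinatorial quantities $\text{CAB}$, $\text{Rab}_\geq$ depend only on the parameter tuple, not on the particular graph, as noted repeatedly in Sections~\ref{sec:rabs} and~\ref{sec:compb}. The main obstacle is purely the bookkeeping in the second step: one must be careful that Corollary~\ref{cor:tIIstr} is stated with $\lfloor h_d\rfloor$ in the conclusion while here we want $\lfloor 1-k/\sigma\rfloor$, so I need $\lfloor\overline{h}_0\rfloor=\lfloor 1-k/\sigma\rfloor$, which is immediate, and that the strict inequality $\text{frac}(\overline{h}_0)>0$ is genuinely needed (it excludes the degenerate case where $\overline{h}_0$ is already an integer, i.e. where $k/\sigma\in\mathbb Z$). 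No deep idea is required beyond correctly chaining the complementation identities with Proposition~\ref{prop:params} and Lemma~\ref{prop:vmu}; the verification can be offloaded to the Maple computation already referenced in Appendix~\ref{app:maple} if desired.
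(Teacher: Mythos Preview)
Your overall strategy---pass to $\overline{\Gamma}$, identify $\text{CAB}(\Gamma)=\text{Rab}_\geq(\overline{\Gamma},0)$ and $\overline{h}_0=1-k/\sigma$, then invoke the Section~\ref{sec:tIIstrict} machinery---is exactly what the paper does. The algebraic reduction of the right-hand bound to $1-\rho(\rho+1)/(v-2k+\lambda)$ is also correct (though your displayed intermediate expression has a sign slip: with $d=0$ the numerator is $(\overline{k}-\overline{\sigma})+\overline{\sigma}(\overline{\rho}-\overline{\sigma})$, not $-\overline{\sigma}(\overline{\rho}-\overline{\sigma})$; the final answer you state is nonetheless right).

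The genuine gap is that you only establish the ``if'' direction. Corollary~\ref{cor:tIIstr}, as stated, is a one-way implication: the fractional-part condition implies $\text{Rab}_\geq(\overline{\Gamma},0)<\lfloor\overline{h}_0\rfloor$. For the converse you must argue that when the fractional-part condition fails, \emph{no} integer $m$ makes $R_{\overline{\Gamma}}(m,\lfloor\overline{h}_0\rfloor,0)$ negative. By \eqref{eq:R_gefficient} the only candidate is $m=[x_{\lfloor\overline{h}_0\rfloor}]$, and the proof of Corollary~\ref{cor:tIIstr} (via Lemma~\ref{lem:tIIRneg}) gives an if-and-only-if for $R_{\overline{\Gamma}}(-\overline{\sigma}-1,\lfloor\overline{h}_0\rfloor,0)<0$. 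What is missing is the identification $[x_{\lfloor\overline{h}_0\rfloor}]=[x_{\overline{h}_0}]=-\overline{\sigma}-1$; this is precisely the content of Corollary~\ref{cor:xd0tII} (applied to $\overline{\Gamma}$, using $\overline{\mu}<\overline{k}$), which the paper cites alongside Corollary~\ref{cor:tIIstr} but which you omit. Without it, nothing in your argument rules out the possibility that $[x_{\lfloor\overline{h}_0\rfloor}]\neq -\overline{\sigma}-1$ and that $R_{\overline{\Gamma}}$ is negative at that other integer even when your fractional-part inequality fails.
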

\begin{proof}
	This is an application of Corollaries \ref{cor:xd0tII} and \ref{cor:tIIstr} to  $\overline{\Gamma}$ and its parameters given above.
\end{proof}

In \cite{GS_2016}, Greaves and Soicher compare the clique adjacency bound to the well-known Delsarte bound \cite{D_1975}. They prove the clique adjacency bound is at least as good as the Delsarte bound for any strongly regular graph. They further conjecture that the clique adjacency bound is at least as good as the Hoffman ratio bound \cite{H_2021} of the complement graph for any edge-regular graph. We can use an eigenvalue bound of Abiad et al. \cite{ADDK_2020} to prove this conjecture is true. 
\begin{thm}
	Let $\Gamma$ be a non-complete edge-regular graph with parameters $(v,k,\lambda)$ and second largest eigenvalue $\rho$. Then 
	\begin{equation*}
	\text{CAB}(\Gamma)\leq \text{Haem}_{\geq}(\overline{\Gamma},0).
	\end{equation*}
\end{thm}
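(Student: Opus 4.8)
If $\Gamma$ is strongly regular then so is $\overline\Gamma$, and the result is immediate from Theorem~\ref{thm:rabcomp} applied to $\overline\Gamma$ together with the identity $\text{Rab}_{\geq}(\overline\Gamma,0)=\text{CAB}(\Gamma)$ just established; so I would assume $\Gamma$ is edge-regular but not strongly regular, which forces $\overline\mu:=v-2k+\lambda>0$. The eigenvalues of $\overline\Gamma$ are $\overline k=v-k-1$ together with $-1-\theta$ over the eigenvalues $\theta\neq k$ of $\Gamma$, so the least eigenvalue of $\overline\Gamma$ is $\overline\sigma=-1-\rho$ and, since $\Gamma$ is non-complete ($\overline k>\overline\sigma$),
\[
B:=\text{Haem}_{\geq}(\overline\Gamma,0)=v\cdot\frac{-\overline\sigma}{\overline k-\overline\sigma}=\frac{v(\rho+1)}{v-k+\rho}.
\]
By Hoffman's ratio bound $B\geq\omega(\Gamma)\geq 2$ (as $\Gamma$ has an edge). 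Since $C_\Gamma(y-x-1,y)=R_{\overline\Gamma}(x,y,0)$ is a polynomial identity with coefficients in $v,k,\lambda$ — here $R_{\overline\Gamma}(x,y,0)=x(x+1)(v-y)-2xy\,\overline k+y(y-1)\overline\mu$ — it is valid for any edge-regular $\Gamma$. Hence it suffices to find an integer $c$ with $3\leq c\leq\lfloor B\rfloor+1$ and an integer $m$ with $R_{\overline\Gamma}(m,c,0)<0$: this makes $C_\Gamma(\,\cdot\,,c)$ negative at an integer, so $\text{CAB}(\Gamma)\leq c-1\leq B$. I would take $c=\lfloor B\rfloor+1$.

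For $0<y<v$, $R_{\overline\Gamma}(\,\cdot\,,y,0)$ is a quadratic in $x$ with positive leading coefficient, minimised at $x_y$ with $x_y+\tfrac12=\overline k y/(v-y)$ (Remark~\ref{rem:x_y}). Exactly as in the proof of Lemma~\ref{lem:upbound}, it is enough to show $R_{\overline\Gamma}(x_y+\tfrac12,y,0)<0$ at $y=c$, since by symmetry $R_{\overline\Gamma}(\,\cdot\,,c,0)$ is then negative on the length-one interval $[x_y-\tfrac12,x_y+\tfrac12]$, which contains an integer. Setting $Q(y):=-\tfrac{v-y}{y}R_{\overline\Gamma}(x_y+\tfrac12,y,0)$, a short computation shows $Q$ is a quadratic in $y$ with leading coefficient $\overline\mu>0$ and constant term $-v(k-\lambda-1)\leq 0$, so it has real roots $r_-\leq 0\leq r_+$ with $Q(y)>0$ exactly for $y>r_+$ or $y<r_-$. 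As $c=\lfloor B\rfloor+1>B\geq 2>r_-$, it now suffices to know $r_+\leq B$, equivalently $Q(B)\geq 0$, equivalently (using $x_B+\tfrac12=-\overline\sigma$, immediate from the formula for $B$) $R_{\overline\Gamma}(-\overline\sigma,B,0)\leq 0$. Unwinding, this is the single inequality
\[
(v-2k+\lambda)\bigl(k+(v-1)\rho\bigr)\ \leq\ \rho\,(v-k-1)(v-k+\rho).
\]

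This inequality is the crux, and the one point at which genuine spectral information about $\Gamma$ beyond $(v,k,\lambda)$ is used; for a strongly regular graph it is an equality, which is why Theorem~\ref{thm:rabcomp} required no extra input. To prove it I would fix a vertex $u$ and take the partition of $V(\Gamma)$ into $\{u\}$, $\Gamma(u)$ and $V(\Gamma)\setminus(\{u\}\cup\Gamma(u))$ (all non-empty, as $\Gamma$ is non-complete). The quotient matrix $B_u$ of $A(\Gamma)$ for this partition has constant row sums $k$, and computing its characteristic polynomial shows that its second eigenvalue equals the positive root of $\overline k\,x^2+\bigl(k(k-\lambda-1)-\lambda\overline k\bigr)x-\overline\mu k$, which is exactly the positive root of the quadratic in $\rho$ obtained by rearranging the displayed inequality. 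Eigenvalue interlacing for (not necessarily equitable) partitions gives that this quantity is $\leq\theta_2(\Gamma)=\rho$, which is precisely the inequality; equivalently, it is an instance of the eigenvalue bound of Abiad et al.~\cite{ADDK_2020} applied to $\Gamma$. Granting the inequality, $R_{\overline\Gamma}(x_c+\tfrac12,c,0)<0$, so some integer $m$ has $R_{\overline\Gamma}(m,c,0)<0$ and $\text{CAB}(\Gamma)\leq\lfloor B\rfloor\leq B=\text{Haem}_{\geq}(\overline\Gamma,0)$. I expect the only real obstacle to be making that last argument precise — identifying the relevant eigenvalue of $B_u$ (equivalently the right instance of the Abiad et al. bound) with the root coming from the displayed inequality — the rest being routine manipulations of the kind already carried out in Sections~\ref{sec:rabs}--\ref{sec:compb}.
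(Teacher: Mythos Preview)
Your argument is correct and follows essentially the same route as the paper. Both proofs reduce the statement to showing that the larger root of the quadratic $-\tfrac{v-y}{y}R_{\overline\Gamma}(x_y+\tfrac12,y,0)$ (equivalently, of the quadratic factor of $C_\Gamma(x_y+\tfrac12,y)$) is at most $\text{Haem}_{\geq}(\overline\Gamma,0)$, and both establish this via the eigenvalue inequality of Abiad et al.\ \cite{ADDK_2020}. The only differences are cosmetic: you work with $R_{\overline\Gamma}$ through the identity $C_\Gamma(y-x-1,y)=R_{\overline\Gamma}(x,y,0)$ while the paper computes with $C_\Gamma$ directly; you split off the strongly regular case (where the result is already known) whereas the paper splits off the disconnected case; and you sketch the interlacing argument behind the Abiad et al.\ bound where the paper simply cites it. Your observation that ``not strongly regular'' forces $\overline\mu>0$ is correct (if $\overline\mu=0$ then $\Gamma$ is regular complete multipartite, hence strongly regular), and this is what makes your quadratic genuinely quadratic.
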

\begin{proof}
	First suppose $\Gamma$ is disconnected. Then $\overline{\Gamma}$ has smallest eigenvalue $-(1+k)$ and $\text{Haem}_{\geq}(\overline{\Gamma},0)=k+1$. As $\Gamma$ is non-complete, $k<\lambda+1$ and we can see that  $C_{\Gamma}(0,k+1)<0$. Therefore, $	\text{CAB}(\Gamma)\leq \text{Haem}_{\geq}(\overline{\Gamma},0).
	$ when $\Gamma$ is disconnected. 
	
	Now we will assume that $\Gamma$ is connected. For each $y<v$ we consider $x_y$, the point at which $C_{\Gamma}(x,y)$ is minimal. Then $x_y+1/2=y(k-y+1)/(v-y)$, and 
	\begin{equation*}
	C_{\Gamma}(x_y+1/2,y)=\frac{-y}{(v-y)}((v-2k+\lambda)y^2+(k^2+3k-\lambda-v(\lambda+2))y+v(\lambda+1-k)).
	\end{equation*}
	Let $s+1$ be the largest root of the quadratic factor in the  above equation. Note that the other root is negative.
	
	Using the same argument as in Lemma \ref{lem:upbound}, we see that for any $y$ such that $s+1<y<v$, there exists an integer $b_y$ such that $C_{\Gamma}(b_y,y)<0$, so we have $\text{CAB}(\Gamma)\leq s+1$. We can check that $s$ is the largest root of the quadratic in $z$, 
	\begin{equation*}
	(v-2k+\lambda)z^2+(k^2-k+\lambda-v\lambda)z-k(v-k-1),
	\end{equation*}
	by substituting $s+1$ into the quadratic above.
	
	Let $\theta_M = -k/s$. In Abiad et al. \cite{ADDK_2020} it is proven that $\rho\geq \theta_M$ in a much more general setting. Then we have
	\begin{eqnarray*}
	\text{Haem}(\overline{\Gamma},0)&=&v(1+(v-k-1)/(\rho+1))^{-1}\\
	&\geq&v(1+(v-k-1)/(\theta_M+1))^{-1}\\
	&=&s+1.
	\end{eqnarray*}
    For the details, see \cite[Theorem 2.30]{ADDK_2020}. Therefore we have $\text{CAB}(\Gamma)\leq s+1 \leq \text{Haem}(\overline{\Gamma},0)$. 
\end{proof}

Greaves and Soicher \cite{GS_2016} also remark on how tight the clique adjacency bound is for small strongly regular graphs. In particular, they ask the following question.
\begin{enumerate}[(Q)]
	\item\label{que:vk2}  Do there exist strongly regular graphs with parameters $(v,k,\lambda,\mu)$, with $k < v/2$, such that every strongly regular graph having those parameters has clique number less than the clique adjacency bound?
\end{enumerate}

It is possible to show that any parameter tuple $(v,k,\lambda,\mu)$ with the properties in question (Q) must have $v>40$. For example, we use the library of strongly regular graphs in the \textsf{AGT} package \cite{AGT_2020} in \textsf{GAP} \cite{GAP_4.11.1} to prove the following.

\begin{prop}\label{rem:vk2}There does not exist strongly regular graph parameters $(v,k,\lambda,\mu)$, with $v<41,k < v/2$, such that every strongly regular graph having those parameters has maximum clique size less than the clique adjacency bound.   
\end{prop}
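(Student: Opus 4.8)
The plan is to reduce the statement to a finite verification carried out with the \textsf{AGT} package. Since the clique adjacency polynomial $C_\Gamma(x,y)$, hence $\text{CAB}(\Gamma)$, depends only on $(v,k,\lambda)$, and since Soicher \cite{S_2010} proved $\omega(\Gamma)\leq\text{CAB}(\Gamma)$ for every edge-regular graph $\Gamma$ (where $\omega$ denotes the maximum clique size), it suffices to show: for every feasible strongly regular parameter tuple $(v,k,\lambda,\mu)$ with $v<41$ and $k<v/2$, there is at least one $\Gamma\in\text{SRG}(v,k,\lambda,\mu)$ with $\omega(\Gamma)=\text{CAB}(\Gamma)$. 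The imprimitive tuples are disposed of immediately: a complete multipartite graph $K_{m\times r}$ has $k=(m-1)r\geq v/2$ for $m\geq 2$ and so is excluded, while a disjoint union of at least two copies of $K_{k+1}$ has $\omega=k+1$, and the theorem proved just above gives $\text{CAB}(\Gamma)\leq\text{Haem}_{\geq}(\overline{\Gamma},0)=k+1$ in the disconnected case, so equality holds. The real content is therefore the primitive case.

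First I would enumerate every feasible primitive parameter tuple with $v<41$ and $k<v/2$; by Proposition \ref{prop:params} together with the standard integrality and Krein conditions there are only finitely many, and they are recorded in \verb|AGT_Brouwer_Parameters| restricted to $v<41$. Restricting to $k<v/2$ loses nothing relevant, since $\overline{\Gamma}$ has $\overline{k}=v-k-1>v/2$ and neither $\text{CAB}$ nor the property in question is affected by the trivial relabelling; in practice one simply also checks the tuples with $k\geq v/2$ or relies on complementation. For each admissible tuple the strongly regular graphs with those parameters have been completely classified, and a complete list of representatives $\Gamma_1,\dots,\Gamma_t$ is available in the graph library of the \textsf{AGT} package. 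I would then compute $\text{CAB}(\Gamma)$ directly from $C_\Gamma$, compute $\max_i\omega(\Gamma_i)$ with an exact clique-number routine, and compare: if $\max_i\omega(\Gamma_i)=\text{CAB}(\Gamma)$ for every tuple on the list, the proposition follows.

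The only genuine obstacle is guaranteeing that the enumeration is exhaustive in both layers: we must invoke the complete classification of strongly regular graphs on fewer than $41$ vertices, which is known and underlies the \textsf{AGT} library, and we must know the parameter list contains every feasible primitive tuple with $k<v/2$. Computing clique numbers is expensive for large graphs, but for $v<41$ the graphs are small and the numbers of library representatives are modest, so each clique computation terminates quickly; soundness of the conclusion then rests on the correctness of the library and of these computations, which can be cross-checked against Brouwer's tables \cite{B_2018}.
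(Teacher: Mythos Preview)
Your approach is essentially the same as the paper's: handle the imprimitive tuples by hand and verify the primitive ones computationally via the \textsf{AGT} library. Your treatment of the imprimitive case is more explicit than the paper's one-line dismissal, and it is correct.

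One minor overstatement: you do not need to invoke the complete classification of strongly regular graphs on fewer than $41$ vertices. Since $\omega(\Gamma)\leq\text{CAB}(\Gamma)$ holds for every edge-regular $\Gamma$, it suffices that for each realizable primitive tuple with $v<41$ and $k<v/2$ the library contain \emph{some} graph $\Gamma$ with $\omega(\Gamma)=\text{CAB}(\Gamma)$; any such $\Gamma$ is a witness, and the existence of further graphs not in the library is irrelevant. This is exactly what the paper checks. Your stated ``genuine obstacle'' of exhaustiveness at the graph level is therefore not an obstacle at all.
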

\begin{proof}
	For imprimitive strongly regular graphs, it is easy to show that this is true. 
	
	The \textsf{AGT} package contains a library of primitive strongly regular graphs. For all parameters $(v,k,\lambda,\mu)$ with such parameters, we can check that the library contains a primitive strongly regular graph with maximum clique size exactly that of the clique adjacency bound. 
\end{proof}


\section*{Acknowledgements}

I would like to thank my former supervisor, Leonard Soicher, for his guidance. I would also like to thank Chi Hoi Yip and Andrei Shubin for their help with the number theory result used in Theorem \ref{thm:infpaley}. This work was supported by the Mathematical Center in Akademgorodok, under agreement No. 075-15-2019-1613 with the Ministry of Science and Higher Education of the Russian Federation.

\begin{appendices}
	\section{Verification with Maple}\label{app:maple}
	
	In this section, polynomial identities used in this paper are verified
	through the use of Maple \cite{MAPLE_2011}.
	
	We start Maple and define $R$ as the regular adjacency polynomial.
\begin{BGVerbatim}{}
!mapleprompt@>| !mapleinput@R:=x*(x+1)*(v-y)-2*x*y*k+(2*x+rho+sig+1)*y*d+y*(y-1)*mu-y*d^2;|
\end{BGVerbatim}
	To work with the parameters of strongly regular graphs, we will use
	the Maple package \verb|Groebner|. We will first define the polynomial
	ring $$P=\mathbb{Q}[t,d,v,k,lambda,mu,rho,sig],$$ where $t$ is considered as our main indeterminate.
	Then the \verb|Groebner| package will be used to calculate Gr\"obner bases and work in certain factor rings of $P$.
	
	We derive relators from Proposition \ref{prop:params} which evaluate to $0$ on
	the parameters \\$(v,k,lambda,mu)$ and restricted eigenvalues $rho,sig$ of a
	strongly regular graph.
\begin{BGVerbatim}{}
!mapleprompt@>| !mapleinput@srg_rel:={mu*(v-k-1)-k*(k-l-1),lambda-mu-rho-sig,mu-k-rho*sig};|
\end{BGVerbatim}
	For type I graphs, we add relators derived from the definition of their parameters. 
\begin{BGVerbatim}{}
!mapleprompt@>| !mapleinput@t1_rel:=srg_rel union {2*k-v+1,4*l-v+5,4*mu-v+1};|
\end{BGVerbatim}
	Now we define a monomial order for the polynomial ring $P$.
\begin{BGVerbatim}{}
!mapleprompt@>| !mapleinput@ord:=tdeg(t,d,v,k,lambda,mu,rho,sig);|
\end{BGVerbatim}
	Next we find the Gr\"obner bases of the ideals generated by the above
	relators. For more information on Gr\"obner bases, see Adams and Loustaunau \cite{AL_1994}.
\begin{BGVerbatim}{}
!mapleprompt@>| !mapleinput@G:=Groebner[Basis](srg_rel,ord);|
!mapleprompt@>| !mapleinput@H:=Groebner[Basis](t1_rel,ord);|
\end{BGVerbatim}
	Let $I$ be the ideal generated by the strongly regular graph relators \verb|srg_rel|,
	and $J$ be the ideal generated by the type 1 relators \verb|t1_rel|. The Gr\"obner
	package can then be used to do calculations in $P$ modulo the ideals $I$ and $J$. 
	
	Now we verify some of the results found in this paper using Maple.
	Some by-hand proofs have been provided in the text, but we provide a proof using Maple to further check their correctness .
	
	First we check Proposition \ref{prop:vmu}.
\begin{BGVerbatim}{}
!mapleprompt@>| !mapleinput@Groebner[NormalForm](v*mu-(k-sig)*(k-rho),G,ord);|
0 
\end{BGVerbatim}
	The next three identities are used in the results of Section \ref{sec:compb}.
	We verify the first equation from Lemma \ref{lem:upbound}.
\begin{BGVerbatim}{}
!mapleprompt@>| !mapleinput@Groebner[NormalForm](-(v-y)*factor(eval(|
!mapleprompt@>| !mapleinput@R,[x=((k-d+1)*y-v)/(v-y)]))/y|
!mapleprompt@>| !mapleinput@-(mu*y^2-((d-rho)*(k-sig)+(k-rho)*(d-sig))*y+v*(d-sig)*(d-rho)),G,ord);|
0  
\end{BGVerbatim}
	Next we verify the factorisation used later in Lemma \ref{lem:upbound}.
\begin{BGVerbatim}{}
!mapleprompt@>| !mapleinput@Groebner[NormalForm](-(v-y)*mu*factor(eval(|
!mapleprompt@>| !mapleinput@R,[x=((k-d+1)*y-v)/(v-y)]))/y|
!mapleprompt@>| !mapleinput@ -((mu*y-(d-rho)*(k-sig))*(mu*y-(d-sig)*(k-rho))),G,ord);|
0 
\end{BGVerbatim}
	The following identity is used in Lemma \ref{lem:vbound}.
\begin{BGVerbatim}{}
!mapleprompt@>| !mapleinput@Groebner[NormalForm](eval(R,[y=v,d=k]),G,ord);|
0 
\end{BGVerbatim}
	We now move to Section \ref{sec:tIstrict}. We start by checking the correctness of Proposition \ref{prop:t1bound}.
\begin{BGVerbatim}{}
!mapleprompt@>| !mapleinput@Groebner[NormalForm](expand(|
!mapleprompt@>| !mapleinput@s*(v*(d-sig)-(k-sig)*(d-sig)*(2+1/sig))),H,ord);|
0 
\end{BGVerbatim}
Finally we check Lemma \ref{lem:t1uppoly}.
\begin{BGVerbatim}{}
!mapleprompt@>| !mapleinput@Groebner[NormalForm](eval(R,[x=d-sig-t,y=2*d-2*sig-2*t])|
!mapleprompt@>|  !mapleinput@+(d-sig-t)*(2*t^2-(1-4*sig)*t+d-3*sig-1),H,ord);|
0 
\end{BGVerbatim}
We note that other results found in Section \ref{sec:strict} can also be checked in a similar way, although we do not give the full details here.

\end{appendices}

\end{document}